\DeclareSymbolFont{euletters}{U}{eur}{m}{n}
\DeclareSymbolFont{eufrakletters}{U}{euf}{m}{n}
\DeclareFontFamily{U}{wncy}{}
    \DeclareFontShape{U}{wncy}{m}{n}{<->wncyr10}{}
    \DeclareSymbolFont{mcy}{U}{wncy}{m}{n}
    \DeclareMathSymbol{\Sha}{\mathord}{mcy}{"58}
\newcommand{\ilim}{\underleftarrow{\text {\rm lim}}}
\newcommand{\Ext}{\text{\rm Ext}}
\newcommand{\Res}{\text{\rm Res}}
\renewcommand{\dim}{\text {\rm dim}}
\newcommand{\Spec}{{\operatorname{Spec\,}}}
\newcommand{\Spf}{\text {\rm Spf \!}}
\newcommand{\Out}{\text{\rm Out}}
\newcommand{\Spin}{\text{\rm Spin}}
\renewcommand{\ker}{\text{\rm ker}\,}
\newcommand{\ord}{{\text{\rm ord}}}
\newcommand{\ad}{{\text{\rm ad}}}
\newcommand{\ed}{{\text{\rm ed}}}
\renewcommand{\top}{{\text{\rm top}}}
\newcommand{\SL}{\text{SL}}
\newcommand{\der}{\text{\rm der}}
\renewcommand{\sc}{\text{\rm sc}}
\newcommand{\Gal}{\text{\rm Gal}}
\newcommand{\GL}{\text{\rm GL}}
\newcommand{\PGL}{\text{\rm PGL}}
\newcommand{\SO}{\text{\rm SO}}
\newcommand{\SU}{\text{\rm SU}}
\newcommand{\GSp}{\text{\rm GSp}}
\newcommand{\PSp}{\text{\rm PSp}}
\newcommand{\Sp}{\text {\rm {Sp}}}
\newcommand{\PO}{\text {\rm {PO}}}
\newcommand{\Sh}{\text{\rm Sh}}
\newcommand{\SSh}{{\mathscr S}}
\newcommand{\A}{\mathcal A}
\renewcommand{\AA}{\mathbb A}
\newcommand{\CC}{\mathbb C}
\DeclareMathSymbol{\fk}\mathord{eufrakletters}{"6B}
\newcommand{\F}{\mathbb F}
\newcommand{\FF}{\mathcal F}
\newcommand{\GG}{\mathbb G}
\renewcommand{\H}{\mathcal H}
\DeclareMathSymbol{\ei}\mathord{euletters}{"69}
\DeclareMathSymbol{\etau}\mathord{euletters}{"1C}
\DeclareMathSymbol{\eiota}\mathord{euletters}{"13}
\DeclareMathSymbol{\eK}\mathord{euletters}{"4B}
\renewcommand{\L}{\mathcal L}
\newcommand{\M}{\mathcal M}
\newcommand{\gm}{\mathfrak m}
\renewcommand{\O}{\mathcal O}
\newcommand{\Q}{\mathbb Q}
\newcommand{\RR}{\mathbb R}
\newcommand{\Z}{\mathbb Z}
\newcommand{\ZZ}{\mathcal Z}
\newcommand{\syn}{\text{\rm syn}}
\newcommand{ \iso} {\overset \sim \longrightarrow}
\newcommand{\Aut}{\text {\rm Aut}}
\newcommand{\lps}{[\![}
\newcommand{\rps}{]\!]}
\newcommand{\RD}{RD}
\newcommand{\SB}{SB}
\newcommand{\onto}{\twoheadrightarrow}
\newcommand{\into}{\hookrightarrow}
\newtheorem{ithm}{Theorem}
\newtheorem{icor}[ithm]{Corollary}
\numberwithin{equation}{subsection}
\newtheorem{question}[ithm]{Question}
\newtheorem{thm}[equation]{Theorem}
\newtheorem{cor}[equation]{Corollary}
\newtheorem{lemma}[equation]{Lemma}
\newtheorem{prop}[equation]{Proposition}
 \theoremstyle{definition}
 \theoremstyle{definition}
 \theoremstyle{remark}
\newtheorem{para}[equation]{\bf}
\theoremstyle{definition}
\begin{document}

\title{The Essential dimension of congruence covers}


\author{Benson Farb, Mark Kisin and Jesse Wolfson}
\address{ Department of Mathematics, University of Chicago}
\email{farb@math.uchicago.edu}
\address{ Department of Mathematics, Harvard }
\email{kisin@math.harvard.edu}
\address{ Department of Mathematics, University of California-Irvine}
\email{wolfson@uci.edu}


\thanks{The authors are partially supported by NSF grants DMS-1811772 (BF), DMS-1601054 (MK) and
DMS-1811846 (JW)}



\begin{abstract}  Consider the algebraic function $\Phi_{g,n}$ that assigns to a general $g$-dimensional abelian variety an $n$-torsion point.  A question first posed by Kronecker and Klein asks: What is the minimal $d$ such that, after a rational change of variables, the function $\Phi_{g,n}$ can be written as an algebraic function of $d$ variables?

Using techniques from the deformation theory of $p$-divisible groups and finite flat group schemes, we answer this question by computing the essential dimension and $p$-dimension of congruence covers of the moduli space of principally polarized abelian varieties.    We apply this result to compute the essential $p$-dimension of congruence covers of the moduli space of genus $g$ curves, as well as its hyperelliptic locus, and of certain locally symmetric varieties.  These results include cases where the locally symmetric variety $M$  is {\em proper}.  As far as we know, these are the first examples of nontrivial lower bounds on the essential dimension of an unramified, non-abelian covering of a proper algebraic variety.
\end{abstract}



\maketitle
\tableofcontents

\section{Introduction}\label{sec:intro}
This article grew out of an attempt to answer questions first raised by Kronecker and Klein.\footnote{See \cite[p. 171]{KleinLetter}, \cite[p. 309]{Kronecker} and \cite{Burkhardt1,Burkhardt2,Burkhardt3}, esp. the footnote to \cite[Ch. 11]{Burkhardt2} on p. 216, and the treatment in \S51-55 that follows.}  Let $K$ be an algebraically closed field of characteristic $0,$ and consider the algebraic function $\Phi_{g,n}$ that assigns to a general $g$-dimensional (principally polarized) abelian $K$-variety an $n$-torsion point.

\begin{question}
\label{question:first}
Let $g,n\geq 2$.  What is the minimum $d$ such that, after a rational change of variables, the function $\Phi_{g,n}$ can be written as an algebraic function of $d$ variables?
\end{question}

We can rephrase Question~\ref{question:first} in more modern language, using the moduli space of principally polarized abelian varieties  and the notion of {\em essential dimension} introduced by Buhler--Reichstein \cite{BR}.
Let $\A_g$ denote the coarse moduli space of $g$-dimensional, principally polarized abelian varieties over $K$, and let $\A_{g,n}^1$ be
the coarse moduli space of pairs $(A,z)$ where $A$ is a principally polarized abelian variety of dimension $g,$ and $z$ is an $n$-torsion point on $A.$

For any finite, generically \'etale map of $K$-schemes, $p:X'\to X$,  define the {\em essential dimension} 
$\ed_K(X'\to X)$, or $\ed_K(X'/X)$ if the map is implicit, to be the minimal $d$ for which, generically, $X'\to X$ is a pullback:
\begin{equation*}
    \xymatrix{
        X' \ar@{-->}[r] \ar[d] & Y' \ar[d] \\
        X \ar@{-->}[r]^f & Y
    }
\end{equation*}
of a finite map $Y'\to Y$ of $d$-dimensional $K$-varieties via a rational map
$f:X\dashrightarrow Y$.  In this case we call $f$ a {\em (rational) compression} of $p.$ 
Question~\ref{question:first} can be rephrased as asking for the value of $\ed_K(\A_{g,n}^1\to\A_g).$ 

Following Klein \cite{KleinIcos}, we can ask a related question, where we allow certain {\em accessory irrationalities}; that is, we can ask for
\[
    \min_{E\to \A_g} \ed_K(\A_{g,n}^1|_E\to E)
\]
for some class of finite, generically \'etale maps $E\to\A_g$.\footnote{Given maps $X'\to X$ and $E\to X$, we use the notations $X'|_E$ and $X'\times_X E$ interchangeably to denote the fiber product.} For example, the {\it essential $p$-dimension} $\ed_K(X'/X; p)$ is defined
(see \cite{ReiYo}, Definition 6.3) as the minimum of $\ed_K(X'\times_X E\to E)$ where $E\to X$  runs over finite, generically \'etale maps of $K$-varieties of degree prime to $p$.
For any map $E \rightarrow X$ one always has $$\ed_K(X'\times_X E\to E) = \ed_K(\tilde X'\times_X E\to E)$$
 where $\tilde X'$ denotes the composite of Galois closures of the connected components of $X'$ 
(see Lemmas \ref{lem:indepGalois}, \ref{lem:indepGaloisII}, cf.~\cite[Lemma 2.3]{BR}). In particular, for any class of maps $E \to \A_g$ the answer to the question above does not change if we replace $\A_{g,n}^1$ 
by $\A_{g,n},$ the coarse moduli space of pairs $(A,\mathcal{B})$ where $A$ is a principally polarized abelian variety of dimension $g,$ and $\mathcal{B}$ is a symplectic basis for the $n$-torsion $A[n].$ (Here and below we fix once and for all an isomorphism $\mu_n(K) \iso \Z/n\Z,$ so that we may speak of a symplectic basis for $A[n].$)

In this paper we apply techniques from the deformation theory of $p$-divisible groups and finite flat group schemes to compute the essential $p$-dimension of congruence covers of certain locally
symmetric varieties, such as $\A_g$.

\begin{ithm}\label{theorem:main1}
Let $g,n\geq 2$, and let $p$ be any prime with $p|n$. Then
    \[
        \ed_K(\A_{g,n}/\A_g; p)= \ed_K(\A_{g,n}/\A_g) = \dim \A_g=\binom{g+1}{2}.
    \]
\end{ithm}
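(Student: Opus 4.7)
The upper bound $\ed_K(\A_{g,n}/\A_g) \leq \binom{g+1}{2}$ is immediate via the identity compression $\A_g \dashrightarrow \A_g$, and since $\ed_K(\cdot;p) \leq \ed_K(\cdot)$ in general, the real content is the lower bound $\ed_K(\A_{g,n}/\A_g;p) \geq \binom{g+1}{2}$. My plan is to reduce first to the cover $\A_{g,p}/\A_g$, then spread out a hypothetical compression to mixed characteristic and analyze it formal-locally via Serre--Tate theory. Because $p \mid n$, the cover $\A_{g,n}\to\A_g$ factors through $\A_{g,p}\to\A_g$, and any rational compression of the former induces one of the latter, so it suffices to show $\ed_K(\A_{g,p}/\A_g;p) \geq \binom{g+1}{2}$.

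To that end, suppose for contradiction I have a finite generically \'etale $E \to \A_g$ of degree prime to $p$ and a rational compression $f: E \dashrightarrow Y$ of $\A_{g,p}|_E \to E$ with $\dim Y = d < D := \binom{g+1}{2}$. I spread everything out over a DVR $R$ of mixed characteristic $(0,p)$ with residue field $k$ of characteristic $p$ and reduce modulo $p$, then choose a closed point $x_0 \in \A_{g,k}(k)$ corresponding to an ordinary principally polarized abelian variety $A_0/k$. By the Serre--Tate theorem, the formal completion of $\A_{g,k}$ at $x_0$ is a formal torus of dimension $D$ with Serre--Tate coordinates $\{q_\alpha\}$, and over the ordinary locus the pullback of $\A_{g,p}\to\A_g$ to this formal neighborhood is (after fixing a symplectic basis of $A_0[p]$ over $k$) obtained by adjoining the $p$-th roots $q_\alpha^{1/p}$ of all $D$ Serre--Tate coordinates, on top of a finite \'etale part coming from choices over $k$.

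I conclude by a formal-local incompressibility argument. Pick a preimage $\tilde x_0 \in E_k$ of $x_0$; since $E\to \A_g$ is \'etale of prime-to-$p$ degree at a generic point, the Serre--Tate coordinates pull back to the formal completion of $E_k$ at $\tilde x_0$. If $d < D$, then the image of $f_k$ on formal completions has dimension at most $d$, so the $q_\alpha$'s pulled back to the formal completion of $Y_k$ at $f_k(\tilde x_0)$ satisfy a nontrivial algebraic relation; but then their $p$-th roots, which are forced by the compression to descend to $Y$, cannot all be independently specified there, contradicting the existence of the descent $\tilde Y' \to Y$ of the level-$p$ cover. The main obstacle will be making this last incompressibility argument rigorous: one must marry the deformation-theoretic count of $D$ independent Serre--Tate directions with the essential-dimension-theoretic constraint from $f$, and control the interaction with the auxiliary prime-to-$p$ cover $E$. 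This is where the techniques from the deformation theory of $p$-divisible groups and finite flat group schemes do the real work, whereas the earlier reductions and choice of special point are relatively standard.
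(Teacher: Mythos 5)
Your outline matches the paper's strategy closely: reduce to $n=p$, spread out over a mixed-characteristic base, localize at an ordinary point, and exploit the Serre--Tate description to show a compression would force a surjection on cotangent spaces and hence $\dim Y \geq \dim \A_g$. The ``main obstacle'' you identify is exactly where the paper's technical work lies: it first uses Abhyankar's lemma and Raynaud--Gruson flattening to obtain a flat integral model $\widehat U \to \widehat Y$ of the compression, and then the incompressibility is proved via Lemmas~\ref{lem:synI}--\ref{lem:synIII} and \ref{lem:nondegen}--\ref{lem:essdimI}, which show the extension classes of the descended $\F_p$-local system on $\widehat Y$ are \emph{syntomic} (come from the integral model), pull back to the universal Serre--Tate classes $c_{i,j}$, and hence their images under $\theta$ span the cotangent space, forcing the formal-local map to be an isomorphism. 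One small imprecision: the Serre--Tate coordinates $q_\alpha$ do not literally ``pull back to the formal completion of $Y_k$'' (there is no map $\A_g \to Y$); the correct formulation, as above, is that the extension classes on $Y$ restrict to those on $E$, and the contradiction is obtained at the level of cotangent spaces, not via an algebraic relation among the $q_\alpha$.
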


Theorem~\ref{theorem:main1} thus answers Question~\ref{question:first}: the minimal $d$ equals $\binom{g+1}{2}$.  We in fact prove a more general result, that for subvarieties of $\ZZ \subset \A_g$ satisfying some mild technical hypotheses, $\ed_K(\A_{g,n}|_{\ZZ} / \ZZ) = \dim \ZZ.$ More precisely, we prove the following (see Theorem  \ref{thm:essdimAg} below).

\begin{ithm}\label{theorem:main2}
Let $p$ be prime, and let $N\ge 3$ be an integer prime to $p$.  Suppose that $L = \bar \Q_p,$ an algebraic closure of $\Q_p,$ let $\O_L$ be its ring of integers and let $k$ be its residue field.
Let $\ZZ \subset \A_{g,N/\O_L}$ be a locally closed subscheme that is equidimensional and smooth over $\O_L$, and whose special fiber $\ZZ_k$ meets the ordinary locus  $\A_{g,N}^{\ord} \subset\A_{g,N/k}.$ Then
    $$ \ed_L(\A_{g,p}|_{\ZZ_L}/\ZZ_L; p) = \ed_L(\A_{g,p}|_{\ZZ_L}/\ZZ_L )= \dim \ZZ_L. $$
\end{ithm}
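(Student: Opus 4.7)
The upper bounds $\ed_L(\A_{g,p}|_{\ZZ_L}/\ZZ_L;p) \le \ed_L(\A_{g,p}|_{\ZZ_L}/\ZZ_L) \le \dim \ZZ_L$ are immediate from the definitions (the identity $\id_{\ZZ_L}$ is a tautological compression, and the $p$-version is always bounded by the full essential dimension), so it suffices to prove the lower bound $\ed_L(\A_{g,p}|_{\ZZ_L}/\ZZ_L;p) \ge \dim \ZZ_L$. My strategy is to localize at an ordinary point of $\ZZ_k$ and use Serre-Tate theory to identify, on a formal neighborhood, a Kummer subcover of the level-$p$ cover, whose incompressibility will then propagate to the desired global bound.

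Since $\ZZ_k$ meets the open ordinary locus by hypothesis, after a harmless finite unramified extension of $\O_L$ I may pick a closed point $x \in \ZZ_k \cap \A_{g,N}^{\ord}$ with residue field $k$. Set $d := \dim \ZZ_L$. Smoothness of $\ZZ/\O_L$ at $x$ yields $\widehat\O_{\ZZ,x} \cong \O_L\lps t_1,\dots,t_d\rps$, and by Serre-Tate theory applied to the ordinary polarized abelian variety at $x$, the formal completion $\widehat{(\A_{g,N/\O_L})}_x$ has a canonical structure of formal torus $\widehat T$ of relative dimension $g(g+1)/2$ over $\O_L$, with Serre-Tate coordinates $u_{ij}$ ($1 \le i \le j \le g$) normalized so $u_{ij}(x)=1$. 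The closed embedding $\widehat{\ZZ}_x \hookrightarrow \widehat T$ is smooth of the expected codimension, so after relabeling indices and making a formal change of coordinates on $\widehat{\ZZ}_x$ I can arrange $\bar u_{i_k j_k} - 1 = t_k$ for $k=1,\dots,d$.

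The central deformation-theoretic input is that the restriction $\A_{g,p}|_{\widehat T} \to \widehat T$ of the level-$p$ cover, after discarding its prime-to-$p$ étale part (trivializing a basis of $A[p]^{\et}$, which does not affect $\ed_L(-;p)$) and an additional constant-$p$-group étale cover (which likewise adds nothing to $\ed_L(-;p)$), contains as an intermediate quotient the $p$-th power isogeny $[p]\colon \widehat T \to \widehat T$. The reason is that extending the canonical splitting of $A_0[p]$ at the ordinary point $x$ to a symplectic basis of $A[p]$ on the universal deformation forces a splitting of the Serre-Tate extension at level $p$, i.e., the extraction of a $p$-th root of each Serre-Tate coordinate. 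This $[p]$-isogeny is the Kummer $\mu_p^{g(g+1)/2}$-torsor $v_{ij}^p = u_{ij}$, and when pulled back to $\widehat{\ZZ}_x$ it contains the $\mu_p^d$-subtorsor $v_k^p = 1 + t_k$. Since $1+t_1,\dots,1+t_d$ are $\F_p$-independent modulo $p$-th powers in $L\lps t_1,\dots,t_d\rps^\times$, this Kummer class has maximal rank $d$, and a Karpenko-Merkurjev-style incompressibility result for Kummer $\mu_p^d$-torsors of maximal symbol rank on a smooth $d$-dimensional base gives $\ed_L(-;p) \ge d$ for this subcover on $\widehat{\ZZ}_{x,L}$. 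Using that essential $p$-dimension does not increase under passage to the completion at $x$, and that intermediate Galois subcovers have at most the essential $p$-dimension of the total cover, the bound $\ed_L(\A_{g,p}|_{\ZZ_L}/\ZZ_L;p) \ge d$ follows.

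The principal obstacle is twofold: first, the rigorous integral identification of the level-$p$ cover on $\widehat T$ with the Serre-Tate Kummer torsor (which requires a careful deformation-theoretic analysis of symplectic $p$-level structures on ordinary abelian schemes over $\O_L$); and second, the formal-to-algebraic propagation of the Kummer incompressibility, which must account for arbitrary prime-to-$p$ base changes $E \to \ZZ_L$ used in defining $\ed_L(-;p)$. Working integrally at an ordinary point, where Serre-Tate theory supplies an explicit toric model of the cover, is precisely what makes this transfer feasible: any hypothetical compression to dimension $<d$ would, upon specialization to the formal neighborhood of $x$, compress the local Kummer torsor and contradict the maximality of its symbol rank.
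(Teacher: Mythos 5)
Your overall intuition is close to the paper's mechanism: both exploit Serre--Tate theory at an ordinary point, and both ultimately reduce to the fact that the Serre--Tate parameters give a Kummer-type description of the deformation of $\A[p]$ whose ``derivatives'' span the full cotangent space. However, your proposal leaves the two hardest steps essentially unresolved, and your invocations of general theorems do not quite match what is needed.

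First, the transfer from a hypothetical compression of $\A_{g,p}|_{\ZZ_L}/\ZZ_L$ (which lives over the generic fiber $\ZZ_L$, and only after a prime-to-$p$ base change $E \to \ZZ_L$ and on a dense open) to a statement about the formal neighborhood $\widehat\O_{\ZZ,x}$ of a closed point $x$ \emph{of the special fiber} is where most of the real work lies. Your phrase ``essential $p$-dimension does not increase under passage to the completion at $x$'' is not a theorem you can cite; $x$ does not lie on $\ZZ_L$, and the relevant objects are not finite type $L$-schemes, so the standard framework for $\ed_L$ does not even apply. What the paper actually does in the proof of Theorem \ref{thm:essdimAg} is a delicate spreading-out argument: starting from a compression $U_K \to Y_K$ in characteristic $0$, it normalizes to get an integral model $U \to \ZZ$, uses Abhyankar's Lemma to find an unramified prime over the generic point of $\ZZ_k$, extends the map $h$ to an integral $h: U \to Y$, applies Raynaud--Gruson flattening after a blow-up to make $U \to Y$ flat near the special fiber, and then passes to $p$-adic completions. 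Only after all of this does one localize at an ordinary $k$-point $x$ and appeal to the Serre--Tate analysis. You acknowledge this ``formal-to-algebraic propagation'' as an obstacle, but an obstacle acknowledged is not an obstacle resolved; it is the bulk of the proof.

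Second, the ``Karpenko--Merkurjev-style incompressibility result for Kummer $\mu_p^d$-torsors of maximal symbol rank'' is not what the paper uses, and it is not clear such a black-box result would apply here, since Karpenko--Merkurjev gives the essential $p$-dimension of a \emph{group} via its \emph{generic} torsor, not the incompressibility of a specific torsor over a specific base. The paper instead proves a bespoke incompressibility statement tailored to the integral situation: it introduces ``syntomic'' extension classes in $\Ext^1_X(\Z/p\Z,\mu_p)$ (those arising from units of the $\O_L$-algebra $A$), shows in Lemmas \ref{lem:synI}--\ref{lem:synII} that syntomicity is detected after flat pullback and lifts uniquely, and proves in Lemma \ref{lem:synIII} that if the images $\theta_A(f^*(L))$ of pulled-back syntomic classes span the reduced cotangent space $\bar\gm_A/\bar\gm_A^2$, then $f$ is an isomorphism of complete local $\O_L$-algebras. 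The Serre--Tate nondegeneracy (Lemma \ref{lem:nondegen}) supplies the spanning hypothesis. This argument lives entirely over $\O_L$-algebras and uses the integral (not generic-fiber) structure in an essential way; it is not merely an appeal to maximal symbol rank over a field.

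Finally, you should make explicit the reduction from $\A_{g,p}|_{\ZZ_L}$ to the \'etale local system $\A[p]|_{\ZZ_L}$, which in the paper is Lemma \ref{lem:equiedlem} combined with Corollary \ref{cor:essdimAg}; your remark about discarding the \'etale part gestures at this but does not make the substitution precise. In summary, the local Serre--Tate picture you describe is the right heuristic, but you have not supplied the global-to-local spreading-out argument nor a correct replacement for the paper's syntomic incompressibility lemmas.
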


We give three applications of Theorem~\ref{theorem:main2}. The first is an analogue of Theorem \ref{theorem:main1} for $\M_g,$ the  coarse moduli space of smooth, proper, genus $g\geq 2$ curves over $K.$  For any integer $n,$ consider the {\em level $n$ congruence cover} $\M_g[n] \to\M_g$, where $\M_g[n]$ denotes the moduli space of pairs $(C,{\mathcal B})$ consisting of a smooth, proper curve $C$ of genus $g,$ together with a symplectic basis $\mathcal B$ for $J(C)[n],$ where $J(C)$ is the Jacobian of $C.$  Applying Theorem~\ref{theorem:main2} and the Torelli theorem, we will deduce the following.

\begin{icor}\label{icor:Mg}
Let $g,n\geq 2$.  Let $p$ be any prime with $p\mid n$. Then
    \[
        \ed_K(\M_g[n]/\M_g)=\ed_K(\M_g[n]/\M_g; p)=\dim \M_g=3g-3.
    \]
\end{icor}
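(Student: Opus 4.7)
The plan is to deduce Corollary~\ref{icor:Mg} from Theorem~\ref{theorem:main2} by transporting the essential $p$-dimension computation from $\A_g$ to $\M_g$ through the Torelli morphism. The upper bound $\ed_K(\M_g[n]/\M_g) \leq 3g-3$ is immediate. For the lower bound, since essential dimension is preserved under extensions of algebraically closed characteristic-zero fields, we may base change to $L := \bar{\Q}_p$. Because $p \mid n$ and $\M_g[n] \to \M_g$ factors as a tower $\M_g[n] \to \M_g[p] \to \M_g$ of Galois covers, essential $p$-dimension is monotone in the cover (any compression of the top cover descends, after Galois closure, to a compression of the bottom), so it suffices to show $\ed_L(\M_g[p]/\M_g;p) \geq 3g-3$.

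Fix $N \geq 3$ coprime to $p$ and consider the Torelli morphism
\[
\tau\colon \M_g[N]_{\O_L} \to \A_{g,N/\O_L}, \qquad (C,\mathcal{B}) \mapsto (J(C),\mathcal{B}).
\]
The source is smooth over $\O_L$ of relative dimension $3g-3$, and by Torelli's theorem $\tau$ is injective on geometric points of the coarse spaces; it is a locally closed immersion away from the hyperelliptic locus for $g \geq 3$, and birational onto a dense open of $\A_{2,N/\O_L}$ for $g = 2$. Choose a dense open $U \subseteq \M_g[N]_{\O_L}$ on which $\tau$ is an immersion, and set $\ZZ := \tau(U) \subseteq \A_{g,N/\O_L}$. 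Then $\ZZ$ is locally closed, equidimensional, and smooth over $\O_L$ of relative dimension $3g-3$; moreover $\ZZ_k$ meets the ordinary locus because smooth proper genus $g$ curves over $\bar{\F}_p$ with ordinary Jacobian form a dense open of $\M_g[N]_k$ (classical). Theorem~\ref{theorem:main2} then gives $\ed_L(\A_{g,p}|_{\ZZ_L}/\ZZ_L;p) = 3g-3$.

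Using $\A_{g,Np} = \A_{g,N} \times_{\A_g} \A_{g,p}$ together with the definition $\M_g[Np] = \M_g \times_{\A_g} \A_{g,Np}$, the cover $\A_{g,p}|_{\ZZ_L}/\ZZ_L$ is birationally identified via $\tau$ with $\M_g[Np]/\M_g[N]$, giving $\ed_L(\M_g[Np]/\M_g[N];p) = 3g-3$. Since $\M_g[Np] = \M_g[p] \times_{\M_g} \M_g[N]$, this is the pullback of $\M_g[p]/\M_g$ along $\M_g[N] \to \M_g$, and essential $p$-dimension is monotone non-increasing under pullback along generically étale maps (a compression of $Y/X$ pulls back to one of $Y \times_X E / E$, and the prime-to-$p$ accessory extensions also pull back). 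Therefore
\[
\ed_L(\M_g[p]/\M_g;p) \geq \ed_L(\M_g[Np]/\M_g[N];p) = 3g-3,
\]
as required. The main technical point is constructing $\ZZ$ satisfying the hypotheses of Theorem~\ref{theorem:main2}, i.e., excising the hyperelliptic locus for $g \geq 3$, handling $g = 2$ by birationality of Torelli, and verifying that $\ZZ_k$ meets the ordinary locus; the identifications of covers via Torelli and the monotonicity statements are then essentially formal.
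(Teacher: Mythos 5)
Your proposal correctly identifies the broad strategy (push $\ZZ$ into $\A_{g,N}$ via Torelli and apply Theorem~\ref{theorem:main2}), and your handling of $g=2$ is fine, but for $g\geq 3$ it has a genuine gap at the step where you claim $\tau\colon\M_g[N]\to\A_{g,N}$ ``is a locally closed immersion away from the hyperelliptic locus.'' This is false. For $g\geq 3$ the Torelli morphism on the level-$N$ moduli space is generically \emph{two-to-one} onto its image: the pairs $(C,\mathcal B)$ and $(C,-\mathcal B)$ are non-isomorphic in $\M_g[N]$ (the generic curve has trivial automorphism group), yet both map to $(J(C),\mathcal B)\cong(J(C),-\mathcal B)$ in $\A_{g,N}$ since $-1\in\Aut(J(C))$. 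Hence no dense open $U\subset\M_g[N]_{\O_L}$ on which $\tau$ is an immersion exists, and the scheme $\ZZ=\tau(U)$ you need cannot be constructed as stated. What Oort--Steenbrink actually gives (and what the paper uses) is that the \emph{quotient} $\M_g[N]'=\M_g[N]/\Sigma$, where $\Sigma(C,\mathcal B)=(C,-\mathcal B)$, maps to $\A_{g,N}$ by an immersion outside the hyperelliptic locus.

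The same oversight propagates to your cover identifications: $\M_g\times_{\A_g}\A_{g,Np}$ is the quotient $\M_g[Np]'=\M_g[Np]/\{\pm 1\}$, not $\M_g[Np]$ itself, and likewise $\A_{g,N}\times_{\A_g}\A_{g,p}$ has half the degree of $\A_{g,Np}$ over $\A_{g,N}$. So even after fixing $\ZZ$ to be the image of $\M_g[N]'$, the cover $\A_{g,p}|_{\ZZ}/\ZZ$ pulls back to $\M_g[N]$ to give $\M_g[pN]''=\M_g[p]\times_{\M_g}\M_g[N]$ modded out by the central $\pm1$ on the $p$-part, not $\M_g[Np]/\M_g[N]$. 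Bridging this gap is not formal: the paper does it by working over $\M_g[N]'$, exhibiting $\M_g[pN]''\leftarrow\M_g[pN]\to\M_g[pN]'$ as central degree-$2$ quotients, and invoking Lemma~\ref{lem:brauer} (a Brauer-obstruction argument) for $p$ odd to conclude $\ed(\M_g[pN]''/\M_g[N]';p)=\ed(\M_g[pN]'/\M_g[N]';p)$; for $p=2$ the two covers coincide. Without this step your chain of equalities does not close, so you need to introduce the $\Sigma$-quotient and the Brauer lemma to make the argument go through for $g\geq 3$.
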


As a second application of Theorem~\ref{theorem:main2}, we answer a question encountered by Burkhardt  in his study of hyperelliptic functions (see in particular \cite[Ch. 11]{Burkhardt2}).  Let $\H_g$ denote the coarse moduli of smooth, hyperelliptic curves of genus $g\geq 2$ over $K.$ For any integer $n,$ consider the {\em level $n$ congruence cover} $\H_g[n] \to\H_g$, where $\H_g[n]$ denotes the moduli space of pairs $(C,{\mathcal B})$ consisting of a hyperelliptic curve $C$ together with a symplectic basis $\mathcal B$ for $J(C)[n].$  Analogously to the case of $\M_g$, we prove the following.

\begin{icor}\label{icor:Hg}
Let $g,n\geq 2$.  Let $p\mid n$ be any odd prime. Then
    \[
        \ed_K(\H_g[n]/\H_g)=\ed_K(\H_g[n]/\H_g; p)=\dim \H_g=2g-1.
    \]
\end{icor}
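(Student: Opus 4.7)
The upper bound $\ed_K(\H_g[n]/\H_g;p) \leq \ed_K(\H_g[n]/\H_g) \leq \dim \H_g = 2g-1$ is immediate, so the work is in the matching lower bound. My plan is to mimic the argument for Corollary \ref{icor:Mg}, but restricted to the hyperelliptic locus. First I would reduce to the case $n = p$ (since $\H_g[p]$ sits between $\H_g[n]$ and $\H_g$, so essential $p$-dimension can only shrink) and to the base field $L = \bar\Q_p$, using the standard specialization argument for essential dimension. It then suffices to show $\ed_L(\H_g[p]/\H_g; p) \geq 2g-1$.

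Choose an auxiliary integer $N \geq 3$ prime to $p$, and let $\ZZ \subset \A_{g,N/\O_L}$ be the image under the Torelli map of $\H_g[N]$, spread out over $\O_L$: in moduli terms $\ZZ$ parametrizes pairs $(C,\mathcal B)$ with $C$ smooth hyperelliptic of genus $g$ and $\mathcal B$ a symplectic basis of $J(C)[N]$. The key verifications are that (i) $\ZZ$ is locally closed, equidimensional of dimension $2g-1$, and smooth over $\O_L$, which follows from the smoothness of the stack of hyperelliptic curves combined with the fact that adding level-$N$ structure is \'etale since $(N,p)=1$; and (ii) the special fiber $\ZZ_k$ meets the ordinary locus $\A_{g,N}^{\ord}$, i.e.\ there exists a smooth hyperelliptic curve of genus $g$ over $\bar\F_p$ with ordinary Jacobian. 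For $p$ odd, (ii) is classical: one may write down explicit equations $y^2 = f(x)$ of degree $2g+1$ or $2g+2$ with invertible Hasse--Witt matrix, and such examples exist for every $g \geq 1$ and every odd $p$.

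Granting (i) and (ii), Theorem \ref{theorem:main2} applies to $\ZZ$ and yields $\ed_L(\A_{g,p}|_{\ZZ_L}/\ZZ_L; p) = 2g-1$. To translate this into a statement about $\H_g[p]/\H_g$, I would use the Torelli theorem for hyperelliptic curves (injectivity of $(C,\mathcal B) \mapsto (J(C),\mathcal B)$ at the level of coarse moduli) to identify $\ZZ_L$ with $\H_g[N]_L$, so that $\A_{g,p}|_{\ZZ_L}$ is identified with the pullback $\H_g[p]\times_{\H_g}\H_g[N]$. Since $\H_g[N] \to \H_g$ is finite \'etale of degree prime to $p$, it is an admissible accessory irrationality in the definition of essential $p$-dimension, so one obtains $\ed_L(\H_g[p]/\H_g; p) \geq \ed_L(\A_{g,p}|_{\ZZ_L}/\ZZ_L; p) = 2g-1$, completing the lower bound.

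The main obstacle is step (ii), the existence of ordinary hyperelliptic Jacobians over $\bar\F_p$; this is exactly the point at which the hypothesis that $p$ is odd enters, since the characteristic-$2$ hyperelliptic situation requires a different formalism and the naive Hasse--Witt argument fails. Everything else is a straightforward adaptation of the Torelli-based reduction used for $\M_g$.
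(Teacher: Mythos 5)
Your plan is close in spirit to the paper's, and both ultimately route through the Torelli map, the existence of ordinary hyperelliptic Jacobians mod $p$, and the essential dimension result for smooth subschemes of $\A_{g,N/\O_L}$ meeting the ordinary locus. The main divergence is that you impose auxiliary level $N\geq 3$ and apply Theorem~\ref{theorem:main2} directly, whereas the paper works at level $N=1$ and invokes Corollary~\ref{cor:essdimAg}, using Matsusaka's result that the generic hyperelliptic Jacobian has automorphism group $\{\pm 1\}$ to discharge the extra hypothesis needed there.

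However, there is a real gap at step (i). You assert that $\ZZ = \mathrm{im}(\H_g[N]\to\A_{g,N/\O_L})$ is smooth over $\O_L$ because the source $\H_g[N]$ is smooth and level-$N$ structure is \'etale. That deduction is not valid: smoothness of the source tells you nothing about the image unless the map is (generically) a locally closed immersion, and this is exactly where a subtlety lurks. The full Torelli map $\M_g\to\A_g$ has \emph{non-injective differential} precisely along the hyperelliptic locus for $g\geq 3$ (the codifferential is the multiplication $\Sym^2 H^0(\Omega_C)\to H^0(\Omega_C^{\otimes 2})$, which fails to be surjective for hyperelliptic $C$). What saves the argument is that the \emph{restriction} of the Torelli map to $\H_g$ nonetheless has injective differential on the tangent space of $\H_g$; this is not automatic and is the content of Oort--Steenbrink, Cor.~3.2, which the paper cites. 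Without that input, the hypothesis ``$\ZZ$ is equidimensional and smooth over $\O_L$'' in Theorem~\ref{theorem:main2} is simply not established, and the argument stalls.

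A smaller point: your identification of $\A_{g,p}|_{\ZZ_L}$ with $\H_g[p]\times_{\H_g}\H_g[N]$ is off by a factor of two coming from the $\{\pm 1\}$-ambiguity in symplectic bases (the coverings $\A_{g,pN}\to\A_{g,N}$ and $\A_{g,p}\to\A_g$ have groups $\Sp_{2g}(\F_p)$ and $\Sp_{2g}(\F_p)/\{\pm 1\}$ respectively, and similarly the fiber product $\H_g[p]\times_{\H_g}\H_g[N]$ quotients by $\{\pm 1\}^2$ while $\H_g[pN]$ quotients only by the diagonal). This discrepancy is harmless for essential $p$-dimension when $p$ is odd, but it needs to be dispatched via Lemma~\ref{lem:brauer} rather than passed over silently; this bookkeeping is precisely what Corollary~\ref{cor:essdimAg} packages. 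Finally, your treatment of step (ii), the existence of ordinary hyperelliptic curves in characteristic $p>2$, is fine (the paper cites Glass--Pries), and your correct identification of $p=2$ as the point where the hypothesis is used is accurate.
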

\noindent
The hypothesis that $p$ is odd in Corollary~\ref{icor:Hg} is necessary; see \ref{para:p=2} below.

Our third application of Theorem~\ref{theorem:main2} generalizes Theorem \ref{theorem:main1} to many locally symmetric varieties.  Recall that a {\em locally symmetric variety} is a variety whose complex points have the form $\Gamma\backslash X$ where $X$ is a Hermitian symmetric domain and $\Gamma$ is an arithmetic  lattice in the corresponding real semisimple Lie group (see \ref{lem:edineq} below).
Attached to $\Gamma$ there is a semisimple algebraic group $G$ over $\Q,$ with $X = G(\RR)/K_{\infty},$ for $K_{\infty} \subset G(\RR)$ a maximal compact subgroup.
By a {\em principal $p$-level covering} $\Gamma_1\backslash X \to \Gamma\backslash X$  we mean that the definition of $\Gamma$ does not involve any congruences at $p,$ and $\Gamma_1 \subset \Gamma$ is the subgroup of elements that are trivial mod $p.$  A sample of what we prove is the following (see Theorem \ref{thm:essdimcong} below for the most general statement).

\begin{ithm}\label{theorem: main3}
With notation as just given, suppose that each irreducible factor of $G^{\ad}_{\RR}$ is the adjoint group of one of $U(n,n),$ $\SO(n,2)$ with $n+2 \neq 8,$ or $\Sp(2n)$ for some positive integer $n.$  If $G$ is unramified at $p,$ (a condition which holds for almost all $p$)
then for any principal $p$-level covering $\Gamma_1\backslash X \rightarrow \Gamma \backslash X$, we have
$$ \ed(\Gamma_1\backslash X \rightarrow \Gamma \backslash X; p) = \dim X.$$
\end{ithm}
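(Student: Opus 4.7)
The plan is to reduce to Theorem~\ref{theorem:main2} by embedding the Shimura variety with complex points $\Gamma \backslash X$ into a Siegel moduli space $\A_{g,N}$ and producing a subvariety $\ZZ \subset \A_{g,N/\O_L}$ satisfying the hypotheses of that theorem. First I would realize $\Gamma \backslash X$ as (a geometric connected component of) a Shimura variety $\Sh_K(G, X)$, choosing $K = K_p K^p \subset G(\AA_f)$ with $K_p$ hyperspecial (possible since $G$ is unramified at $p$) and $K^p$ fine enough to correspond to a full level-$N$ structure for some integer $N \ge 3$ prime to $p$. Since every irreducible factor of $G^{\ad}_{\RR}$ is of one of the listed types, the datum $(G, X)$ appears in Deligne's list of Hodge-type Shimura data, so that a symplectic embedding $(G, X) \hookrightarrow (\GSp(V), \mathcal{H}_g)$ exists. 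By Kisin's construction of smooth integral canonical models for Hodge-type data at primes of hyperspecial level (with Kim's extension at $p = 2$), this extends to a closed immersion of smooth integral models $\mathscr{S}_K \hookrightarrow \A_{g, N / \O_L}$. Let $\ZZ$ be an irreducible component of the image.

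The key verification is that the special fiber $\ZZ_k$ meets the Siegel ordinary locus $\A_{g,N}^{\ord}$; once this is in hand, $\ZZ$ is automatically equidimensional and smooth over $\O_L$, and Theorem~\ref{theorem:main2} applies to give $\ed_L(\A_{g,p}|_{\ZZ_L}/\ZZ_L; p) = \dim \ZZ_L = \dim X$. For PEL-type data attached to $\Sp(2n)$ and $U(n,n)$, the existence of ordinary points in $\ZZ_k$ is classical, going back to Wedhorn and Moonen. For $\SO(n,2)$ the Shimura variety is of Hodge type via the Kuga--Satake construction, and I would invoke the description of the $\mu$-ordinary stratum in type IV Shimura varieties (Madapusi, Hamacher) to see that the minuscule cocharacter associated to $(G, X)$ lands in the ordinary Newton stratum of the ambient $\GSp_{2g}$. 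The exclusion $n+2 \neq 8$ reflects the triality anomaly in $\Spin(8)$, where this compatibility fails.

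To transfer the bound to the principal $p$-level cover $\Gamma_1 \backslash X \to \Gamma \backslash X$, observe that the Hodge embedding induces an injection $G \hookrightarrow \GSp(V)$ of $\Z_p$-group schemes, hence an injection on $\F_p$-points modulo the center of $G$; this contributes at worst a prime-to-$p$ discrepancy between the Galois closures of the covers $\Sh_{K_p}^0 \to \Sh_K^0$ and $\A_{g,p}|_{\ZZ_L} \to \ZZ_L = \Sh_K^0$. By Lemmas~\ref{lem:indepGalois} and~\ref{lem:indepGaloisII}, the two covers therefore have the same essential $p$-dimension, yielding $\ed(\Gamma_1 \backslash X \to \Gamma \backslash X; p) = \dim X$ (the upper bound being trivial). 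The main obstacle is the ordinary-locus verification: it is precisely the group-theoretic compatibility between the $\mu$-ordinary stratum of $\mathscr{S}_K$ and the ordinary stratum of $\A_{g, N}$ that motivates the restrictions on $G^{\ad}_{\RR}$ in the statement.
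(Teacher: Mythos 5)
Your overall reduction --- realize $\Gamma\backslash X$ as a component of a Hodge-type Shimura variety, use Kisin's smooth integral canonical models to embed it in $\A_{g,N/\O_L}$, and apply Theorem~\ref{theorem:main2} to the image $\ZZ$ --- is the same skeleton as the paper's (via Lemma~\ref{lem:edineq} and Theorem~\ref{thm:essdimcong}). However, the step you yourself flag as ``the main obstacle'' is exactly where the argument is incomplete. The ordinariness of a point of $\ZZ_k$ in the Siegel sense is \emph{not} an automatic consequence of $\mu$-ordinariness of the Shimura variety; the key is that $\mu$-ordinary implies Siegel-ordinary only if the prime $\lambda$ of the reflex field has residue field $\F_p$. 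The paper isolates this cleanly: it introduces the notion of $p$-admissibility of $G$ (a condition on the $\Gal(\bar\Q_p/\Q_p)$-action on the distinguished vertices $\Sigma(G)$ of the Dynkin diagram), shows that the hypotheses of the theorem ($U(n,n)$, $\Sp(2n)$, $\SO(n,2)$ with $n\neq 6$) imply $p$-admissibility, and then uses Deligne \cite[Prop.~2.3.10]{DeligneCorvallis} to \emph{choose} the Hodge-type lift $(G,X)$ so that the reflex field $E(G,X)$ has a prime of residue field $\F_p$. Only then does Wortmann's density of the $\mu$-ordinary locus give a Siegel-ordinary point. Your case-by-case appeal to Wedhorn--Moonen and Madapusi--Hamacher does not isolate this criterion, and in particular does not explain why you are free to take the residue degree to be one. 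Relatedly, the exclusion $n+2\neq 8$ is not best described as a ``Newton stratum compatibility'' failure: it arises because for $D_4$ triality destroys the stability of $\Sigma(G)$ under all Dynkin diagram automorphisms, so $p$-admissibility can fail.

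A second, smaller issue is the ``transfer'' paragraph at the end. You hedge about a ``prime-to-$p$ discrepancy'' between the Galois closures, invoking the center of $G$. The center of $G^{\sc}$ can have order divisible by $p$ (e.g.\ $\SL_n$ with $p\mid n$), so this would not be harmless if it were real. But there is in fact no discrepancy: Lemma~\ref{lem:edineq} computes the monodromy of $\A[p]$ over a geometrically connected component of $\Sh_K(G,X)$ to be exactly $G^{\der}(\F_p)$ (using strong approximation for the congruence subgroup), and this is precisely the Galois group of the principal $p$-level covering. The Galois closure of $\A[p]|_{\ZZ_L}\to\ZZ_L$ \emph{is} the principal $p$-level cover; one applies Lemma~\ref{lem:equiedlem} and is done, with no prime-to-$p$ fudge needed.
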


In fact our results apply to any Hermitian symmetric domain of classical type, but in general they require a more involved condition on the $\Q$-group $G$ giving rise to $\Gamma;$ they always apply if $G$ splits over $\Q_p.$ Note that these results include cases where the locally symmetric variety $\Gamma\backslash X$ is {\em proper}. As far as we know, these are the first examples of nontrivial lower bounds on the essential dimension of an unramified, non-abelian covering of a proper algebraic variety.  The only prior result for unramified covers of proper varieties of which we are aware is due to Gabber \cite[Appendix]{CT}, who proved that if $\{E'_i\to E_i\}$ is a collection of connected, unramified $\Z/p\Z$ covers of elliptic curves $E_i$, then under certain conditions, the cover $E_1'\times\cdots\times E_r'\to E_1\times\cdots\times E_r$ has essential dimension at $p$ equal to $r$.

When $\Gamma\backslash X$ is proper, the use of fixed-point techniques is precluded by the fact that one cannot use ``ramification at infinity''. This is in analogy with Margulis superrigidity for irreducible lattices $\Gamma$ in higher rank semisimple Lie groups, where, for nonuniform 
$\Gamma$ (equivalently noncompact $\Gamma\backslash X$), unipotents in $\Gamma$ play a crucial role.  When $\Gamma$ is uniform, it contains no unipotents, and new ideas were needed (and were provided only later, also by Margulis).

There are many examples of finite simple groups of Lie type for which our methods give a lower bound on the essential $p$-dimension of a covering of locally symmetric varieties with that group.

\begin{icor}\label{icor:Lieexamples} Let $H$ be a classical, absolutely simple group over $\F_q,$ with $q = p^r.$ Then there is a congruence $H(\F_q)$-cover of locally symmetric varieties $Y' \rightarrow Y$ such that $e:= \ed_K(Y'/ Y; p)$ satisfies :
\begin{itemize}
\item If $H$ is a form of $\PGL_n$ which is split if $n$ is odd, then $e = r\lfloor \frac{n^2}4 \rfloor.$
\item If $H$ is $\PSp_{2n}$ then $e = r(\frac{n^2+n}2).$
\item If $H$ is a split form of  $\PO_{2n}$ then $e = r(\frac{n^2-n}2).$
\item If $H$ is a form of $\PO_n$ and $H$ is not of type $D_4,$ then $e = r(n-2).$
\end{itemize}
\end{icor}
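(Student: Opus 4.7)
The plan is to deduce each case of the corollary from the general form of Theorem~\ref{theorem: main3}, i.e.\ Theorem~\ref{thm:essdimcong}, by exhibiting, for each classical type of $H$, a semisimple $\Q$-group $G$ whose associated Hermitian symmetric space $X$ satisfies $\dim X = e$, and such that the Galois group of a suitable principal $p$-level congruence cover of the corresponding locally symmetric variety is $H(\F_q)$. Once such a $G$ is constructed and the hypotheses of Theorem~\ref{thm:essdimcong} are verified, the conclusion $\ed_K(Y'/Y;p) = \dim X = e$ is immediate.

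To produce $G$, I would first fix a totally real number field $F/\Q$ of degree $r$ in which $p$ is inert, so that $F$ admits a unique prime $\mathfrak{p}$ above $p$ with residue field $\F_q$; such $F$ can be obtained as subfields of cyclotomic fields $\Q(\zeta_\ell)$ for appropriate auxiliary primes $\ell$. Next I would construct an absolutely simple $F$-group $H'$ that is unramified at $\mathfrak{p}$, whose reduction modulo $\mathfrak{p}$ is the given $H/\F_q$, and whose base change $H'_\sigma$ at each infinite place $\sigma \colon F \hookrightarrow \RR$ is a prescribed Hermitian real form. Setting $G := \Res_{F/\Q} H'$, one has $X = \prod_\sigma X_{H'_\sigma}$, so $\dim X = r \cdot \dim X_{H'_\sigma}$, and the Galois group of the principal $p$-level cover is the reduction mod~$\mathfrak{p}$ of $H'(\O_{F,\mathfrak{p}})$, namely $H(\F_q)$.

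For each case, I would match $\dim X_{H'_\sigma}$ to $e/r$ by the appropriate choice of real form: for $H$ a form of $\PGL_n$ (split if $n$ is odd), take $H'_\sigma = \mathrm{PU}(\lceil n/2\rceil,\lfloor n/2\rfloor)$, giving complex dimension $\lfloor n^2/4 \rfloor$; for $H = \PSp_{2n}$, take $H'_\sigma = \PSp_{2n}(\RR)$, giving $n(n+1)/2$; for $H$ a split form of $\PO_{2n}$, take $H'_\sigma$ to be the adjoint of $\SO^*(2n)$, giving $n(n-1)/2$; and for $H$ a form of $\PO_n$ with $H$ not of type $D_4$, take $H'_\sigma = \SO(n-2,2)/\{\pm 1\}$, giving $n-2$. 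In each case $G_\RR^{\ad}$ is a product of classical Hermitian factors among those handled by Theorem~\ref{thm:essdimcong} (note that the exclusion $n \neq $ type $D_4$ in the last case corresponds exactly to the exclusion $n+2 \neq 8$ for $\SO(n,2)$ in Theorem~\ref{theorem: main3}), and unramifiedness of $G$ at $p$ follows from $p$ being inert in $F$ together with the unramifiedness of $H'$ at $\mathfrak{p}$.

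The main obstacle is producing the $F$-form $H'$ with the prescribed local behavior at $\mathfrak{p}$ and at every real place of $F$. This is a Hasse principle (more precisely, a local-global realizability) problem for classical $F$-groups, reducing to the simultaneous prescription of standard local invariants: a CM extension $E/F$ together with Hermitian-form data in the type $A$ case (with $\mathfrak{p}$ splitting in $E$ when the split type $A$ reduction is required), and a quadratic $F$-form together with its Hasse--Witt, discriminant, and (in type $D$) Clifford invariants in the orthogonal cases. The hypothesis ``split if $n$ is odd'' in the type $A$ case is imposed precisely to remove the obstruction to realizing the required odd-rank unitary form with the prescribed signatures and $\mathfrak{p}$-adic behavior, while the exclusion of $D_4$ in the orthogonal case avoids the triality-related failure of the Hasse principle for forms of $\SO_8$. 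Granted these existence statements, Theorem~\ref{thm:essdimcong} applies directly and yields the stated values of $e$.
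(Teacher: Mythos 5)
Your proposal tries to apply Theorem~\ref{thm:essdimcong} directly to a $\Q$-group $G$ whose derived group is an adjoint form (a form of $\PGL_n$, $\PSp_{2n}$, $\PO_{2n}$, or $\PO_n$). But Theorem~\ref{thm:essdimcong} requires the almost simple group $G'$ to be of \emph{Hodge type}, and the definition of Hodge type in the paper forces $G'$ to be simply connected (except in the $D^{\mathbb{H}}$ case, where $G'(\CC)$ is allowed to be a product of $\SO$'s). Adjoint groups like $\PGL_n$, $\PSp_{2n}$, $\PO_n$ are never of Hodge type in this sense, so the theorem does not apply to them as stated, and the core dimension count you invoke is not licensed by it.

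The paper's proof is a two-step argument you are missing: first Proposition~\ref{prop:oldmainthm} establishes the analogous statement for the covering (almost-simply-connected) groups $\SL_n$, $\Sp_{2n}$, $\SO_{2n}$, $\Spin_n$, where Hodge type is available; then Lemma~\ref{lem:brauer} is used to pass from a Galois cover with group $H^{\sc}(\F_q)$ to the quotient cover with group $H^{\ad}(\F_q) = H^{\sc}(\F_q)/Z$, using that the relevant central subgroups $Z$ are cyclic of order prime to $p$, so the essential $p$-dimension does not change under the quotient. That descent step is the entire content of deducing Corollary~\ref{icor:Lieexamples} from Proposition~\ref{prop:oldmainthm}, and your proposal never performs it. (Secondary issues: the ``split if $n$ odd'' hypothesis and the $D_4$ exclusion arise from the $p$-admissibility analysis and the uniqueness of the outer automorphism $a \in \Out(G)$ in the twisting construction of Proposition~\ref{prop:oldmainthm}, not, as you suggest, from obstructions to realizing Hermitian forms or from a Hasse-principle failure; and the local-global construction of the $\Q$-form $G$ is done precisely via Lemma~\ref{lem:twists}, which you should cite rather than gesture at.)
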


\medskip
\noindent
{\bf Historical Remarks. }
\begin{enumerate}
        \item The study of essential dimension originates in Hermite's study of the quintic \cite{Hermite}, Kronecker's response to this \cite{Kronecker}, and Klein's ``Resolvent Problem'' (see \cite[Lecture IX]{KleinNU} and \cite{KleinIcos} esp. Part II, Ch. 1.7, as well as \cite{Klein87}, \cite{KleinLast}, and more generally the papers \cite[p. 255-506]{KleinCW}; see also \cite{Tschebotarow}). Following Buhler-Reichstein \cite{BR}, the last two decades have seen an array of computations, new methods, generalizations, and applications of this invariant (see \cite{ReICM} or \cite{MerSurv} for recent surveys), but computations to date have largely focused on a different set of problems than those we consider here.
        \item In contrast to Kronecker, Klein also advocated for the consideration of accessory irrationalities as a ``characteristic feature'' of higher degree equations, and called upon his readers to ``fathom the nature and significance of the necessary accessory irrationalities.'' \cite[p. 174, Part II, Ch.1.7]{KleinIcos} While essential dimension at $p$ partially answers this call, a full answer requires the notion of {\em resolvent degree} $\RD(X\to Y)$,  which asks for the minimum $d$ such that an algebraic function admits a {\em formula}\footnote{Recall that a {\em formula} for an algebraic function $\Phi$ on a complex variety $X$ is a tower of finite field extensions $\mathbb{C}(X)=K_0\subset K_1\subset\cdots\subset K_r$, a choice of primitive element $\Psi_i\in K_i$ for $K_i/K_{i-1}$ for each $i$, and an embedding $\mathbb{C}(X)(\Phi)\subset K_r$ of fields over $\mathbb{C}(X)$. 
One can then write $\Phi = R(\Psi_1,\dots, \Psi_r)$ for $R \in K(X)(t_1,\dots, t_r)$ a rational function, which is to say a ``formula'' for $\Phi$ in terms of the $\Psi_i.$}
using only algebraic functions of $d$ or fewer variables (see \cite{FW}). For example,  Klein's icosahedral formula for the quintic \cite{KleinIcos} shows that $\RD(\Phi)=1$ for any algebraic function $\Phi$ with monodromy $A_5$, while the essential p-dimension can equal 2. At present, we do not know of a single example which provably has $\RD(X\to Y)>1$.
\end{enumerate}

\medskip
\noindent
{\bf Idea of Proof. } To prove Theorem \ref{theorem:main1} and its generalization to subvarieties, we use arithmetic techniques, and specifically Serre-Tate theory, which describes the deformation theory of an ordinary abelian variety in characteristic $p$ in terms of its $p$-divisible group. Let $N \geq 3$ be an integer coprime to $p,$ and let $\A$ denote the universal abelian scheme over $\A_{g,N}$ (now considered over $\mathbb Z[\zeta_N][1/N]$). Let $\A[p]$ be its $p$-torsion group scheme, and let $\A_x$ denote the fiber of $\A$ at $x.$ Given a rational compression of $\A_{g,pN}\to\A_{g,N}$ (in characteristic $0$) onto a smaller-dimensional variety, we show that there is an ordinary mod $p$ point $x$ of $\A_{g,N},$ and a tangent direction $t_x$ at $x,$ such that the deformation of $\A_x[p]$ corresponding to $t_x$ is trivial. From this we deduce that the deformation of $\A_x$ corresponding to $t_x$ is trivial, a contradiction.

One might view our method as an arithmetic analogue of the ``fixed point method'' in the theory of essential dimension (see \cite{ReICM}), where the role of fixed points for a group action is now played by wild ramification at a prime. In the fixed point method one usually works over a field where the order of the group is invertible. In contrast, for us the presence of wild ramification plays an essential role.

\bigskip
\noindent
{\bf Acknowledgements.}  We would like to thank Aaron Landesman and Zinovy Reichstein for detailed comments on an earlier version of this paper and for extensive helpful correspondence. We also thank Maxime Bergeron, Mike Fried, Eric Rains, Alexander Polishchuk and Burt Totaro, for various helpful comments. Finally we thank the referee for many useful comments.

\section{Preliminary results}\label{sec:prelim}
\numberwithin{equation}{subsection}
\subsection{Finite \'etale maps}
We begin with some general lemmas on finite \'etale maps.

\begin{para} Let $G \rightarrow G'$ be a map of groups, and $S$ a finite set with an action of $G.$ 
We say the action of $G$ on $S$ {\em lifts} to $G'$ if there is an action of $G'$ on $S$, which induces the 
given action of $G$ on $S.$ We call such a $G'$-action a {\em lifting} of the $G$-action on $S.$ 
We say that the action of $G$ on $S$ {\em virtually lifts} to $G'$ if there is a finite 
index subgroup $G'' \subset G',$ containing the image of $G,$ such that the action of $G$ on $S$ lifts to $G''.$

For a positive integer $n$ we denote by $S^n$ the $n$-fold product equipped with the diagonal action of $G,$ 
and by $\pi_i: S^n \rightarrow S$, $i=1,\dots, n$ the projections.
\end{para}

\begin{lemma}\label{lem:basicsetlemma} Let $G \rightarrow G'$ be a map of groups, and $S$ a finite set with an action of $G.$ 
\begin{enumerate}
\item Suppose the action of $G$ on $S$ admits a lifting to an action of $G'$ on $S,$ and fix 
such a lifting.  Then there exists a finite index subgroup $G'' \subset G'$ containing the image of $G,$ 
such that $G$ and $G''$ have the same image in $\Aut(S).$
\item Let $T \subset S^n$ be a $G$-stable subset such that $\cup_{i=1}^n \pi_i(T) = S.$ Then 
the action of $G$ on $S$ virtually lifts to $G'$ if and only if the action of $G$ on $T$ virtually lifts to $G'.$
\end{enumerate}
\end{lemma}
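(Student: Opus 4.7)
Both parts are elementary; the key points are the finiteness of $\Aut(S)$ and, for part (2), an injectivity statement that uses the covering hypothesis $\bigcup_i \pi_i(T) = S$.

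For (1), write $\alpha : G \to \Aut(S)$ for the given action and $\alpha' : G' \to \Aut(S)$ for its lift, so that $\alpha'$ composed with $G \to G'$ equals $\alpha$. The plan is simply to set
$$ G'' := (\alpha')^{-1}(\alpha(G)). $$
This contains the image of $G,$ maps onto $\alpha(G) \subset \Aut(S)$ under $\alpha',$ and has finite index in $G'$ since $[G' : G'']$ divides the index $[\alpha'(G') : \alpha(G)] \le |\Aut(S)| < \infty$.

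The ``only if'' direction of (2) follows from (1): given a virtual lift of the $G$-action on $S$ to some finite-index $G_0 \subset G',$ shrink $G_0$ via (1) to a finite-index $G'' \subset G_0$ whose image in $\Aut(S)$ equals that of $G.$ Then $G$ and $G''$ have the same diagonal image in $\Aut(S^n),$ so the $G$-stability of $T$ gives $G''$-stability, and restricting the diagonal $G''$-action to $T$ exhibits the required lift.

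The substance of the lemma is the ``if'' direction. The crucial observation is that the restriction map
$$ \rho : Y := \{\sigma \in \Aut(S) : \sigma(T) = T\} \longrightarrow \Aut(T) $$
is injective: if $\sigma \in Y$ fixes $T$ pointwise, then for each $s \in S$ we may write $s = \pi_i(t)$ with $t \in T,$ whence $\sigma(s) = \pi_i(\sigma(t)) = \pi_i(t) = s.$ Suppose now that the $G$-action on $T$ virtually lifts via $\beta : G_0 \to \Aut(T)$ for some finite-index $G_0 \subset G'.$ I would then set $G'' := \beta^{-1}(\rho(Y));$ this has finite index in $G_0$ (hence in $G'$) because $\rho(Y) \subset \Aut(T)$ is finite, and it contains the image of $G$ because the diagonal $G$-action on $T$ factors through $\rho(Y)$ by the very definition of $Y.$ The composition
$$ G'' \xrightarrow{\beta} \rho(Y) \xrightarrow{\rho^{-1}} Y \subset \Aut(S) $$
then furnishes the desired lift of the $G$-action on $S.$ The only point one needs to verify carefully, and the only conceivable obstacle, is that this final composition really extends the original $G$-action on $S;$ this is immediate from the injectivity of $\rho,$ since the restrictions of both homomorphisms to $G$ project under $\rho$ to the same (diagonal) $G$-action on $T.$
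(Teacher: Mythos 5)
Your proof is correct and follows essentially the same route as the paper's. For part (1) your $G'' = (\alpha')^{-1}(\alpha(G))$ is exactly the paper's subgroup $N\cdot\mathrm{im}(G)$ (with $N=\ker\alpha'$), and for part (2) the injectivity of the restriction map $Y\to\Aut(T)$ is precisely the paper's ``uniqueness'' step, derived in both cases from the covering hypothesis $\bigcup_{i}\pi_i(T)=S.$
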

\begin{proof} Let $N \subset G'$ be the kernel of $G' \rightarrow \Aut(S),$ and $G'' \subset G'$ the subgroup 
generated by $N$ and the image of $G.$ Since $S$ is finite, $N$ and hence $G''$ has finite index in $G',$ 
and satisfies the conditions in (1).

For (2), suppose that the action of $G$ on $S$ virtually lifts to $G'.$ By (1), after replacing $G'$ by a finite index subgroup 
containing the image of $G,$ we may assume that the action of $G$ on $S$ lifts to $G',$ and that $G$ and $G'$ have the same image in $\Aut(S).$ 
Then $T$ is $G'$-stable, so the action of $G$ on $T$ lifts to $G'.$ Conversely, if the action of $G$ on $T$ virtually lifts to $G',$ then, by (1), 
we may assume that the action of $G$ on $T$ lifts to $G',$ and that $G$ and $G'$ have the same image in $\Aut(T).$ In particular, any element of $G'$ acts on 
$\Aut(T)$ via an element of $\Aut(S).$ Since $\cup_{i=1}^n \pi_i(T) = S$ this element of $\Aut(S)$ is uniquely determined. 
The uniqueness implies that $G' \rightarrow \Aut(T)$ factors through a homomorphism $G' \rightarrow \Aut(S),$ 
which lifts the action of $G$ on $S.$
\end{proof}

\begin{para}
Let $X$ be a scheme, and $f:Y \rightarrow X$ a finite \'etale cover. We will say that $f$ is Galois
if any connected component of $Y$ is Galois over its image. If $\bar x$ is a geometric point of $X,$ then 
$Y$ corresponds to a finite set $S(Y)$ equipped with an action of $\pi_1(X,\bar x).$ Conversely, if $X$ is connected and 
$S$ is a finite set with an action of $\pi_1(X,\bar x),$ we denote by $Y(S)$ the corresponding finite \'etale cover 
of $X.$

In the next three lemmas we consider a map of schemes $g: X \rightarrow X'$ and a finite \'etale cover
$f': Y' \rightarrow X'$ equipped with an isomorphism $Y \iso Y'\times_{X'}X.$ If $X$ and $X'$ are connected, 
and $\bar x$ again denotes the geometric point of $X'$ induced by $\bar x,$ then such a $Y'$ exists if and only 
if the action of $\pi_1(X,\bar x)$ on $S(Y)$ lifts to $\pi_1(X',\bar x).$
\end{para}

\begin{lemma}\label{lem:coverlemI} If $f$ is Galois, then there is an finite \'etale $h: X'' \rightarrow X'$
such that $g$ factors through $X''$ and $Y'' = X''\times_{X'} Y' \rightarrow X''$ is Galois.
\end{lemma}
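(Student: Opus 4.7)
The plan is to pass to the fundamental group picture and construct $X''$ by enlarging the image of $\pi_1(X)$ inside $\pi_1(X')$ by the kernel of its action on the fiber of $Y'$.

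First I would reduce to the case where $X$ and $X'$ are both connected. Writing $X = \sqcup_\beta X_\beta$ and $X' = \sqcup_\alpha X'_\alpha$ with $g(X_\beta)\subset X'_{\alpha(\beta)}$, the connected case applied to each pair $(X_\beta, X'_{\alpha(\beta)})$ with the cover $Y'|_{X'_{\alpha(\beta)}}$ produces a connected finite \'etale $X''_\beta \to X'_{\alpha(\beta)}$ through which $X_\beta$ factors and over which the corresponding pullback of $Y'$ is Galois; the disjoint union $X'' = \sqcup_\beta X''_\beta$ then inherits all required properties componentwise.

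Now assume $X$ and $X'$ are connected. Fix a geometric point $\bar x$ of $X$ and let $\bar x'$ denote its image in $X'$; write $G = \pi_1(X,\bar x)$, $G' = \pi_1(X',\bar x')$, $\phi : G \to G'$. The cover $Y'$ is described by a finite $G'$-set $S = S(Y')$, and $Y$ corresponds to $S$ as a $G$-set via $\phi$. Let $N = \ker(G' \to \Aut(S))$; since $\Aut(S)$ is finite, $N$ is normal of finite index in $G'$. Set $H = \phi(G)\cdot N$, which is a subgroup of $G'$ (as $N$ is normal), contains $N$ (hence has finite index in $G'$), and contains $\phi(G)$. Let $X''$ be the connected finite \'etale cover of $X'$ corresponding to $H$; because $\phi(G) \subset H$, the map $g$ factors through $X''$.

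It then remains to check that $Y'' = Y'\times_{X'}X'' \to X''$ is Galois. Since $N$ acts trivially on $S$, the $H$-action factors through $H/N$, and the second isomorphism theorem gives $H/N = \phi(G) N/N \cong \phi(G)/(\phi(G)\cap N)$, which is exactly the image $\bar G$ of $G$ in $\Aut(S)$. In particular, the $H$-orbits on $S$ coincide with the $G$-orbits $S_i$. The Galois hypothesis on $f$ asserts that for $s \in S_i$ the $G$-stabilizer $N_i$ is normal in $G$; equivalently, $\bar G$ acts freely on $S_i$, so the stabilizer of $s$ in $\bar G$ is the image of $N_i$ in $\bar G$, which is normal in $\bar G$. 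Pulling back along $H \twoheadrightarrow H/N = \bar G$, the $H$-stabilizer of $s$ is normal in $H$, so $Y''$ is Galois over $X''$. I expect the only point to watch to be the compatibility in the last step: the enlargement by $N$ preserves normality precisely because $N$ is already normal in the ambient group $G'$, not merely in $\phi(G)$; everything else is formal manipulation of $\pi_1$-sets.
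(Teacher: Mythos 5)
Your proof is correct and follows essentially the same route as the paper's: construct $X''$ from the finite-index subgroup of $\pi_1(X',\bar x)$ generated by the kernel $N$ of the action on $S$ together with the image of $\pi_1(X,\bar x)$ (this is the paper's Lemma~\ref{lem:basicsetlemma}(1)), then verify Galoisness by passing normality through image and preimage. One small aside is inaccurate: the claim that $N_i$ normal is ``equivalently, $\bar G$ acts freely on $S_i$'' fails when $Y$ has several components, since $\bar G$ is the image in $\Aut(S)$ and the $\bar G$-stabilizer of $s\in S_i$ need not be trivial; but you do not actually use freeness, only that this stabilizer (being the image of the normal subgroup $N_i \subset G$) is normal in $\bar G$, so the argument stands.
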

\begin{proof} We may assume that $X$ and $X'$ are connected. 
We apply Lemma \ref{lem:basicsetlemma}, (1), to $\pi_1(X,\bar x) \rightarrow \pi_1(X',\bar x)$ and $S = S(Y) = S(Y').$ 
Let $h: X'' \rightarrow X'$ be the finite \'etale map corresponding to the $\pi_1(X',\bar x)$-set $\pi_1(X',\bar x)/G''.$ 
Since this set has a $\pi_1(X,\bar x)$ fixed point, $g$ factors through $X''.$ By construction, 
the images of $\pi_1(X'',\bar x)$ and $\pi_1(X,\bar x)$ in $\Aut(S(Y')) = \Aut(S(Y))$ are equal. Denote this image by $H,$ 
and for $s \in S(Y)$ denote by $H_s$ the stabilizer.  
Since $Y\rightarrow X$ is Galois, $H_s$ is a normal subgroup which does not depend on $s,$ which is implies that $Y'' \rightarrow X''$ 
is Galois.
\end{proof}

\begin{para} Let $A$ be a finite ring. By an {\em $A$-local system} $\FF$ on $X$ we will mean an \'etale sheaf of $A$-modules which is locally isomorphic to the constant $A$-module $A^n$ for some $n.$ Such an $\FF$ is representable by a finite \'etale map
$Y(\FF) \rightarrow X.$ This is clear \'etale locally on $X,$ and follows from \'etale descent in general.
Isomorphism classes of $A$-local systems are in bijective correspondence with conjugacy classes of representations
$\pi_1(X) \rightarrow \GL_n(A).$

For $X \rightarrow X_1$ a map of schemes, and $\mathcal G$ an $A$-local system on $X_1,$ we will denote by $\mathcal G|_X$ the pullback of $\mathcal G$ to $X.$ 

\end{para}

\begin{lemma}\label{lem:coverlemII} Suppose that $f: Y = Y(\FF) \rightarrow X$ corresponds to an $A$-local system $\FF.$
Then there is a finite \'etale $h: X'' \rightarrow X'$ such that $g$ factors through $X''$
and $Y'' = X''\times_XY' \rightarrow X''$ represents an $A$-local system
$\FF'',$ with $\FF''|_X \iso \FF.$
\end{lemma}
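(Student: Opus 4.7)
My plan is to mirror the proof of Lemma \ref{lem:coverlemI}, this time tracking the additional $A$-module structure coming from $\FF$. As in that proof, one reduces at once to the case where $X$ and $X'$ are connected.

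Fix a geometric point $\bar x$ of $X$, and set $G = \pi_1(X,\bar x)$, $G' = \pi_1(X',\bar x)$, and $S = S(Y) = S(Y').$ The fact that $Y = Y(\FF)$ represents an $A$-local system equips $S$ with an $A$-module structure such that the $G$-action on $S$ factors through the subgroup $\Aut_A(S) \subset \Aut(S)$ of $A$-linear automorphisms. The hypothesis $Y \cong Y' \times_{X'} X$ says exactly that the $G$-action on $S$ admits a lifting to a $G'$-action.

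I would then apply Lemma \ref{lem:basicsetlemma}(1) to produce a finite index subgroup $G'' \subset G'$ containing the image of $G$ such that $G$ and $G''$ have the same image in $\Aut(S).$ Let $h: X'' \rightarrow X'$ be the finite \'etale cover corresponding to the $G'$-set $G'/G'';$ since $G''$ contains the image of $G,$ the distinguished coset $eG''$ is fixed by $G,$ so $g$ factors through $X''.$ The cover $Y'' = X''\times_{X'}Y'$ then corresponds to the $G''$-set $S.$

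The remaining observation is now immediate: the image of $G''$ in $\Aut(S)$ equals that of $G,$ and is therefore contained in $\Aut_A(S).$ Hence $G''$ acts on $S$ by $A$-linear automorphisms, and $Y''$ represents an $A$-local system $\FF''$ on $X''.$ Since the $G$-action on the $A$-module $S$ induced via $G \subset G''$ agrees with the original one, we obtain $\FF''|_X \iso \FF.$ The whole argument thus reduces to a single application of Lemma \ref{lem:basicsetlemma}(1); the descent of the $A$-module structure comes for free, precisely because enlarging $G$ to $G''$ does not enlarge the image in $\Aut(S).$
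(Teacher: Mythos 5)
Your proof is correct and follows essentially the same route as the paper: both invoke the construction of $X''$ from Lemma~\ref{lem:basicsetlemma}(1) (equivalently, from the proof of Lemma~\ref{lem:coverlemI}) and then observe that because $\pi_1(X'',\bar x)$ and $\pi_1(X,\bar x)$ have the same image in $\Aut(S)$, and the latter image lies in $\Aut_A(S)$, the $A$-linear structure descends to $X''$. Your write-up is somewhat more explicit about the intermediate steps, but the idea is identical.
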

\begin{proof}  Choose $X''$ as in the proof of Lemma \ref{lem:coverlemI}, using the construction in Lemma \ref{lem:basicsetlemma}, (1). Then the finite set $S(Y) = S(Y')$ naturally has the structure of a finite free $A$-module on which $\pi_1(X,\bar x)$ acts $A$-linearly. Since the images of $\pi_1(X'',\bar x)$ and $\pi_1(X,\bar x)$ in $\Aut(S(Y)$ are equal, $\pi_1(X'',\bar x)$ acts on $S(Y)$ $A$-linearly, which implies the statement of the lemma.
\end{proof}

\begin{para} For any integer $N \geq1$ we denote by $\mu_N$ the kernel of $\GG_m \overset N \rightarrow \GG_m.$ This is a finite flat group scheme on $\Z,$ and we denote by the same symbol its pullback to any scheme $X.$
This pullback is \'etale if any only if $X$ is a $\Z[1/N]$-scheme.

Suppose $X$ is a $\Z[1/N]$-scheme. 
For any $\Z/N\Z$ algebra $A,$ we again denote by $\mu_N$ or $A(1)$ the \'etale sheaf $\mu_N\otimes_{\Z/N\Z}A.$ 
For any non-negative integer $i$ we write $A(i) = A(1)^{\otimes i}.$ For $i$ negative we set 
$A(i)$ equal to the $A$-linear dual of $A(-i)$.
\end{para}

\begin{lemma}\label{lem:coverlemIII} Let $N \geq 1$ be an integer, $i,j$ integers, and $A$ a finite $\Z/N\Z$-algebra.
Suppose that $X$ is a $\Z[1/N]$-scheme and that $f: Y = Y(\FF) \rightarrow X$ corresponds to an $A$-local system $\FF,$ which is an extension of  $A(i)^r$ by $A(j)^s,$ for some positive integers $h,g.$

Then there is a finite \'etale map $X'' \rightarrow X'$ such that $g$ factors through $X''$
and the cover $Y'' = X''\times_XY' \rightarrow X''$ represents an $A$-local system
$\FF'',$  which is an extension of $A(i)^r$ by $A(j)^s,$ with $\FF''|_X \iso \FF$ as extensions.
\end{lemma}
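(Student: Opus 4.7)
The plan is to combine Lemma \ref{lem:coverlemII}, which upgrades $Y' \to X'$ to an $A$-local system after passing to a finite étale cover, with a further refinement that arranges the $A$-local system to carry a compatible extension structure. Assume $X$ and $X'$ are connected (the general case follows by treating components separately). Fix a geometric point $\bar x$ of $X$ and its image in $X'$, and let $S = S(Y) = S(Y')$, viewed as a finite set with compatible actions of $\pi_1(X,\bar x)$ and $\pi_1(X',\bar x)$.

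First I would apply Lemma \ref{lem:coverlemII} to produce a finite étale $X_1 \to X'$ factoring $g$, together with an $A$-local system $\FF_1$ on $X_1$ such that $Y_1 := Y' \times_{X'} X_1$ represents $\FF_1$ and $\FF_1|_X \iso \FF$ as $A$-local systems. By the construction (via Lemma \ref{lem:basicsetlemma}(1)), $\pi_1(X_1, \bar x)$ and $\pi_1(X, \bar x)$ have the same image in $\Aut_A(S)$. Choose an $A$-module identification $S \iso A^{r+s}$ so that the sub $A(j)^s \subset \FF$ corresponds to the standard $A$-submodule $A^s$, with the action of $\pi_1(X,\bar x)$ on $A^s$ given by the scalar $\chi^j$ and on the quotient $A^r$ given by the scalar $\chi^i$, where $\chi: \pi_1(X_1,\bar x) \rightarrow A^\times$ is the cyclotomic character (well-defined since $X_1$ is a $\Z[1/N]$-scheme).

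Next, let $Q \subset \GL_{r+s}(A) \times A^\times$ be the subgroup consisting of pairs $(g,\lambda)$ with
$$ g = \begin{pmatrix} \lambda^j I_s & B \\ 0 & \lambda^i I_r \end{pmatrix}, \quad B \in \Mat_{s\times r}(A). $$
The $A$-local system structure on $\FF_1$ together with $\chi$ defines a homomorphism
$$ \varphi: \pi_1(X_1, \bar x) \rightarrow \GL_{r+s}(A) \times A^\times. $$
The extension hypothesis on $\FF$ is exactly the statement that $\varphi$ carries the image of $\pi_1(X,\bar x)$ into $Q$. Since $\GL_{r+s}(A) \times A^\times$ is finite, $K := \varphi^{-1}(Q)$ is a finite-index open subgroup of $\pi_1(X_1,\bar x)$ containing the image of $\pi_1(X,\bar x)$, and so corresponds to a finite étale cover $X'' \rightarrow X_1$ through which $g$ factors. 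Setting $\FF'' := \FF_1|_{X''}$, the $\pi_1(X'',\bar x)$-action preserves the filtration $A^s \subset A^{r+s}$ and acts on the graded pieces by the scalars $\chi^j$ and $\chi^i$, producing the desired extension $0 \rightarrow A(j)^s \rightarrow \FF'' \rightarrow A(i)^r \rightarrow 0$ on $X''$.

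Finally, one needs to verify that restricting this extension back to $X$ recovers the original: the sub-local system of $\FF''$ is built from the submodule $A^s \subset A^{r+s}$, which by our initial identification of fibers matches $A(j)^s \subset \FF$, and the identifications of graded pieces with $A(j)^s$ and $A(i)^r$ are induced by $\chi^j$ and $\chi^i$, hence restrict to the given ones. I do not anticipate a serious obstacle in this argument; the main subtlety is just to assemble the $A$-linear and cyclotomic refinements into a single homomorphism whose preimage of $Q$ can be cut out as a single finite étale cover, which is what the joint target $\GL_{r+s}(A) \times A^\times$ accomplishes.
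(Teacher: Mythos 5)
Your proof is correct and takes a genuinely different route from the paper's. The paper's proof first twists $\FF$ by $A(-j)$ to reduce to the case $j = 0$, so that the sub-local-system becomes the constant sheaf $A^s$; then, after invoking Lemma \ref{lem:coverlemII} and the observation that the common image $H$ of $\pi_1(X)$ and $\pi_1(X'')$ in $\Aut(S)$ must act trivially on the constant submodule, it concludes that the filtration is automatically $\pi_1(X'')$-stable with constant sub; it then repeats this twist-and-reduce step for the quotient $\FF/A(j)^s \otimes A(-i)$. Your proof instead skips the twisting and handles both graded conditions at once: you package the monodromy representation and the cyclotomic character into a single map $\varphi : \pi_1(X_1,\bar x) \to \GL_{r+s}(A)\times A^\times$, cut out the explicit block-triangular subgroup $Q$ encoding both the filtration and the scalar actions on the graded pieces, and take $X''$ to be the cover corresponding to $\varphi^{-1}(Q)$. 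This avoids the two-step iteration, at the cost of writing down a larger target group and tracking the cyclotomic character explicitly. Note that both arguments implicitly require $X'$ (and hence $X_1$, $X''$) to be a $\Z[1/N]$-scheme so that $\mu_N$ is \'etale and $A(i)$, $A(j)$ make sense as local systems on $X''$ and so that the cyclotomic character is defined on $\pi_1(X_1)$; this is not among the stated hypotheses of the lemma but is needed for its conclusion to be meaningful, and it holds in the paper's application (where everything lives over a characteristic-zero field).
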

\begin{proof} Replacing $\FF$ by $\FF(-j) : = \FF\otimes_AA(-j)$, we may suppose that $j=0.$
By Lemma \ref{lem:coverlemII}, we may assume $Y''$ represents an $A$-local system and
$g^*(\FF') \iso \FF$ as $A$-local systems. Let $\FF_1 \subset \FF$ denote the sub $A$-local system corresponding to
$A(j)^s,$ so that $\FF_1$ corresponds to an $A$-submodule $S(Y')_1\subset S(Y').$

Since the group $H$ acts trivially on $S_{Y',1},$ after replacing $X'$ by $X'',$
we may assume that $\FF'$ is an extension of $\FF'/A(i)^r$ by $A(j)^s,$ and
$g^*(\FF') \iso \FF$ is an isomorphism of extensions. A similar argument, applied to
$\FF/A(j)^s\otimes A(-i),$ shows that we may assume that $\FF'$ is an extension of $A(i)^r$ by 
$A(j)^s,$ with $\FF'|_X \iso \FF$ as extensions.
\end{proof}

\subsection{Essential dimension}
\begin{para} Let $K$ be a field, $X$ a $K$-scheme of finite type, and $f: Y \rightarrow X$ a finite \'etale cover.
The {\em essential dimension} \cite[\S 2]{BR} 
$\ed_K(Y/X)$ of $Y$ over $X$ is the smallest integer $e$ such that there exists
a finite type $K$-scheme $W$ of dimension $e,$ a dense open subscheme $U \subset X,$ and a map
$U \rightarrow W,$ such that $Y|_U$ is the pullback of a finite \'etale covering over $W.$
The {\em essential $p$-dimension} $\ed_K(Y/X;p)$ is defined as the minimum of $\ed_K(Y\times_XE/E)$ where
$E \rightarrow X$ runs over dominant, generically finite maps, which have degree prime to $p$ at all generic points of $X.$

Note that when $X$ is irreducible, the definition does not change if we consider only coverings with $E$ irreducible. 
Indeed, for any $E \rightarrow X,$ as above, one of the irreducible components of $E$ will have degree prime to $p$ over the 
generic point of $X.$

Suppose $X$ is connected. A {\em Galois closure} of $Y \rightarrow X$ is a union of Galois closures of the connected components  
of $Y.$ If $Y_i \rightarrow X$ $i=1, \dots r$ are finite \'etale, Galois and connected, then a {\em composite} of the $Y_i$ 
is a connected component of $Y_1\times_X\times \dots \times _X Y_r.$ Up to isomorphism, this does not depend on the choice of connected component. If we drop the assumption that $X$ is connected,  
we define the Galois closure and composite by making these constructions over each connected component of $X.$
\end{para}

\begin{lemma}\label{lem:indepGalois} Let $f:Y\rightarrow X$ be finite \'etale cover, and $\tilde Y$ a Galois closure for $Y.$
For any map of $K$-schemes $E \rightarrow X,$ we have 
$$ \ed_K(Y_E/E) = \ed_K(\tilde Y_E/E).$$ In particular, we have 
$$ \ed_K(Y/X;p) = \ed_K(\tilde Y/X;p).$$
\end{lemma}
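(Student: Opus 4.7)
The plan is to prove the two inequalities $\ed_K(Y_E/E) \leq \ed_K(\tilde Y_E/E)$ and $\ed_K(\tilde Y_E/E) \leq \ed_K(Y_E/E)$ separately. The ``in particular'' statement then follows at once by taking the infimum over generically finite maps $E\to X$ of degree prime to $p$ at the generic points of $X$. We may reduce to the case where $X$ (and hence $E$) is connected by working one connected component of $X$ at a time, and noting that the Galois closure is defined componentwise.

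For the easier direction $\ed_K(\tilde Y_E/E) \leq \ed_K(Y_E/E)$: suppose $Y_E \to E$ is the pullback of a finite \'etale cover $W'\to W$ of $K$-varieties via a rational map $E \dashrightarrow W$ with $\dim W = e := \ed_K(Y_E/E)$. Shrinking $W$, we may assume $E\to W$ is an actual morphism. Taking the Galois closure of $W' \to W$ (componentwise) is a functorial construction on finite \'etale covers: in terms of $\pi_1(W,\bar w)$-sets it replaces each orbit $S_i \subset S(W')$ by $\pi_1(W,\bar w)/N_i$, where $N_i$ is the kernel of the action on $S_i$. This construction commutes with base change along $E \to W$, because $\pi_1(E,\bar e) \to \pi_1(W,\bar w)$ carries the kernel of the $E$-action into $N_i$. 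Hence $\widetilde{W'} \times_W E$ is a Galois closure of $Y_E \to E$, so it is isomorphic to $\tilde Y_E$, and $\ed_K(\tilde Y_E/E) \leq \dim W = e$.

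For the reverse inequality $\ed_K(Y_E/E) \leq \ed_K(\tilde Y_E/E)$: suppose $\tilde Y_E \to E$ is the pullback of $\tilde W' \to W$ via $E \dashrightarrow W$ with $\dim W = \tilde e$. The obstacle is that $\tilde W' \to W$ need not itself be Galois, so there is no immediate group action to quotient by in order to recover $Y_E$. I would apply Lemma \ref{lem:coverlemI} to obtain a finite \'etale cover $h: W_1 \to W$ through which $E \to W$ factors and such that $\tilde W'_{W_1} := \tilde W' \times_W W_1 \to W_1$ is Galois. On each connected component, $Y$ is presented as $\tilde Y/H$ for a subgroup $H$ of the Galois group $G$ of the corresponding component of $\tilde Y$. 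Since the images of $\pi_1(W_1,\bar w)$ and $\pi_1(E,\bar e)$ in the automorphism groups of the fibers of $\tilde W'_{W_1}$ coincide by construction in Lemma \ref{lem:coverlemI}, these subgroups $H$ descend to subgroups of the Galois group of $\tilde W'_{W_1} \to W_1$. Forming the corresponding quotient $W'' := \tilde W'_{W_1}/H \to W_1$ gives a finite \'etale cover whose pullback along $E\to W_1$ is $Y_E$. Since $\dim W_1 = \dim W = \tilde e$, we conclude $\ed_K(Y_E/E) \leq \tilde e$.

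The main obstacle is the second direction, and specifically the bookkeeping required to transport the subgroup $H \subset G$ defining $Y$ as a quotient of $\tilde Y$ to a corresponding subgroup of the Galois group of $\tilde W'_{W_1}\to W_1$. This is exactly what Lemma \ref{lem:coverlemI} (via Lemma \ref{lem:basicsetlemma}(1)) is designed to deliver: after replacing $W$ by a suitable finite \'etale cover $W_1$, the monodromy groups of the two covers (upstairs on $\tilde W'_{W_1}$ and on $\tilde Y_E$) get identified, which is precisely what allows the descent of the quotient construction.
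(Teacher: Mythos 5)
There is a genuine gap in your ``easier direction'' $\ed_K(\tilde Y_E/E) \leq \ed_K(Y_E/E)$. The step that fails is the assertion that Galois closure commutes with base change along $E \to W$ (and, implicitly, along $E \to X$). It does not. If $W' \to W$ is connected with monodromy group $\Gal = \mathrm{im}(\pi_1(W) \to \Aut(S))$, then $\widetilde{W'} \times_W E$ is the $\pi_1(E)$-set $\Gal$ with action through $\pi_1(E)\to\Gal$; by contrast, $\widetilde{W'\times_W E}$ depends only on the \emph{image} of $\pi_1(E)$ in $\Aut(S)$ and on the orbit decomposition of $S$ under $\pi_1(E)$, which is generally strictly smaller. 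Concretely, take $\pi_1(W) = S_3$ acting on $S = \{1,2,3\}$, so $W'\to W$ has degree $3$ and $\widetilde{W'}\to W$ has degree $6$. For $E\to W$ with $\pi_1(E)$ mapping onto a cyclic subgroup of order $3$, the pullback $W'\times_W E$ is already connected Galois of degree $3$, so $\widetilde{W'\times_W E}$ has degree $3$, while $\widetilde{W'}\times_W E$ has degree $6$. For the same reason, the two covers $\tilde Y_E = \tilde Y\times_X E$ and $\widetilde{W'}\times_W E$ need not be isomorphic over $E$ either: one is built from the image of $\pi_1(X)$ in $\Aut(S)$, the other from the image of $\pi_1(W)$, and there is no a priori relation between those images beyond both containing the image of $\pi_1(E)$.

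So the conclusion ``$\widetilde{W'}\times_W E$ is a Galois closure of $Y_E$, so it is isomorphic to $\tilde Y_E$'' is false on both counts, and the dimension bound does not follow from what you wrote. Your second direction is essentially sound (the descent of the quotient by $H$ works because, after passing to $W_1$ as in Lemma~\ref{lem:coverlemI}, $\Aut_E(\tilde Y_E)$ and $\Aut_{W_1}(\tilde W'_{W_1})$ are both the centralizer of the common monodromy image in $\mathrm{Sym}(\tilde S)$, so $H$ acts on both), though this bookkeeping deserves to be spelled out. The paper sidesteps the asymmetry entirely: it realizes $\tilde Y$ via a $\pi_1(X,\bar x)$-stable subset $\tilde S \subset S^n$ with $\cup_i\pi_i(\tilde S) = S$, and then Lemma~\ref{lem:basicsetlemma}(2) gives, for any group homomorphism from $\pi_1(U,\bar y)$, that the action on $S$ virtually lifts \emph{if and only if} the action on $\tilde S$ does. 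That single biconditional delivers both inequalities at once, which is exactly what your first-direction argument is missing. I would recommend replacing your first direction with an appeal to Lemma~\ref{lem:basicsetlemma}(2) (or to Lemma~\ref{lem:coverlemI}-style reasoning in that direction as well), rather than trying to commute Galois closure with base change.
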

\begin{proof} We may assume that $X$ is connected. Let $\bar x$ be a geometric point for $X,$ and $S$ a $\pi_1(X,\bar x)$-set 
with $Y = Y(S).$ Suppose first that $Y$ is connected. Let $N \subset \pi_1(X,\bar x)$ be the (normal) subgroup which fixes $S$ pointwise. If $s \in S,$ and $\pi_1(X,\bar x)_s$ is the stabilizer of $s,$ 
then $N = \cap_{i=1}^n g_i\pi_1(X,\bar x)_sg_i^{-1}$ 
for a finite collection of elements $g_1,\dots, g_n \in \pi_1(X,\bar x).$ Let $\tilde S$ denote the $\pi_1(X,\bar x)$-orbit of  
$(g_1\cdot s, \dots, g_n\cdot s) \in S^n.$ Then the stabilizer in $\pi_1(X,\bar x)$ of any point of $\tilde S$ is $N,$ 
so $\tilde Y = Y(\tilde S)$ is a Galois closure of $Y.$

Now we drop the assumption that $Y$ is connected, and let $Y_1, \dots, Y_r$ denote the connected components of $Y,$ 
with $Y_i$ corresponding to a subset $S_i \subset S$ on which $\pi_1(X,\bar x)$ acts transitively. 
The above construction gives subsets $\tilde S_i \subset S_i^n,$ 
(we may assume without loss of generality that $n$ does not depend on $i$) with $\tilde Y_i = Y(\tilde S_i)$ a Galois closure of $Y_i.$
If $\tilde S = \coprod_i \tilde S_i \subset S^n,$ then $\tilde Y = Y(\tilde S) = \coprod_i \tilde Y_i$ is a Galois closure for $Y.$

From the construction one sees that  $S,$ $T = \tilde S$ and $G = \pi_1(X,\bar x)$ satisfy the conditions in Lemma \ref{lem:basicsetlemma}, (2) 
(note that these conditions do not depend on the choice of $G'$). 
For any map of groups $H \rightarrow G,$ these conditions continue to hold if we instead view $S$ and $T$ as $H$-sets. 

Now let $E \rightarrow X$ be a map of $K$-schemes, $U \subset E$ 
a connected open subset, and $U \rightarrow E'$ a map of $K$-schemes. 
Suppose $U$ admits a geometric point $\bar y$ mapping to $\bar x.$ 
Applying the above remark with $H = \pi_1(U,\bar y),$ Lemma \ref{lem:basicsetlemma}, (2) implies that the 
action of $H$ on $S$ virtually lifts to $\pi_1(E',\bar y)$ if and only the action of $H$ on $T$ virtually lifts 
to $\pi_1(E',\bar y).$ As in the proof of Lemma \ref{lem:coverlemI}, this implies that $Y|_U$ arises by pullback from a cover of $E''$ for some finite \'etale $E'' \rightarrow E',$ 
if and only if the same condition holds for $\tilde Y|_U.$ Since $\bar x$ was an arbitary geometric point of $X,$ 
this implies $\ed_K(Y|_E/E) = \ed_K(\tilde Y|_E/E).$
\end{proof}

\begin{lemma}\label{lem:indepGaloisII} Let $Y_i \rightarrow X$ $i=1,\dots, r$ be connected Galois coverings of $K$-schemes, and 
$Y\rightarrow X$ a composite of the $Y_i.$ Then for any map of $K$-schemes $E\rightarrow X,$ we have 
$$ \ed_K(Y_E/E) = \ed_K((\coprod_i Y_i)_E/E).$$
 In particular, we have 
$$ \ed_K(Y/X;p) =  \ed_K((\coprod_i Y_i)/X; p).$$
\end{lemma}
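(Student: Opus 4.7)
The plan is to mimic the proof of Lemma~\ref{lem:indepGalois}, reducing the statement to the combinatorial content of Lemma~\ref{lem:basicsetlemma}(2). First I would reduce to the case that $X$ is connected (by working one component at a time). Fix a geometric point $\bar x$ of $X$, set $G = \pi_1(X,\bar x)$, and let $S_i$ be the $G$-set corresponding to the connected Galois cover $Y_i$. Then $\coprod_i Y_i = Y(S)$ for $S := \coprod_i S_i$, while the composite $Y$ corresponds to a single $G$-orbit $T \subset S_1 \times \cdots \times S_r$. Via the natural inclusions $S_i \hookrightarrow S$, I would view $T$ as a $G$-stable subset of $S^r$ (with the diagonal action), and the $i$th projection $\pi_i \colon S^r \to S$ then satisfies $\pi_i(T) \subset S_i$.

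The key combinatorial observation is that the triple $(G,S,T)$ satisfies the hypothesis of Lemma~\ref{lem:basicsetlemma}(2), i.e.\ $\bigcup_{i=1}^r \pi_i(T) = S$. Indeed, each $\pi_i(T)$ is a nonempty $G$-invariant subset of $S_i$, and since $Y_i$ is connected, $G$ acts transitively on $S_i$, so $\pi_i(T) = S_i$; taking the union gives $S$. These conditions continue to hold when we view $S$ and $T$ as $H$-sets for any homomorphism $H \to G$. With this set up, the rest of the argument copies Lemma~\ref{lem:indepGalois} verbatim: given a map $E \to X$, a connected open $U \subset E$ admitting a geometric point $\bar y$ above $\bar x$, and a map $U \to E'$, applying Lemma~\ref{lem:basicsetlemma}(2) with $H = \pi_1(U,\bar y)$ and $G' = \pi_1(E',\bar y)$ shows that the $H$-action on $S$ virtually lifts to $\pi_1(E',\bar y)$ if and only if the $H$-action on $T$ does. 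As in the proof of Lemma~\ref{lem:coverlemI}, this is equivalent to saying that $(\coprod_i Y_i)|_U$ is a pullback from some finite \'etale $E'' \to E'$ if and only if $Y|_U$ is. Consequently $\ed_K((\coprod_i Y_i)_E/E) = \ed_K(Y_E/E)$, and the statement for essential $p$-dimension follows by taking the infimum over dominant, generically finite maps $E \to X$ of degree prime to $p$.

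The only substantive thing to verify is the surjectivity $\pi_i(T) = S_i$; this is where the hypothesis that each $Y_i$ is connected and Galois is used in an essential way, since without transitivity of $G$ on $S_i$ one could have $T$ projecting into a proper $G$-stable subset of $S_i$ and the reduction to Lemma~\ref{lem:basicsetlemma}(2) would fail. Everything downstream of that point is a direct translation of the argument in the preceding lemma.
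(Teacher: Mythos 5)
Your proof is correct and follows essentially the same route as the paper's: identify $Y$ with a transitive $\pi_1(X,\bar x)$-set $T \subset S_1\times\cdots\times S_r$, view $T \subset S^r$ for $S = \coprod_i S_i$, verify the hypothesis $\bigcup_i\pi_i(T)=S$ of Lemma~\ref{lem:basicsetlemma}(2), and then repeat the argument of Lemma~\ref{lem:indepGalois}. One small remark on your closing commentary: the surjectivity $\pi_i(T)=S_i$ really only uses that $Y_i$ is connected (so $G$ acts transitively on $S_i$); the Galois hypothesis is what makes the composite well defined up to isomorphism, not what drives this particular verification.
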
 
\begin{proof} Let $\bar x$ be a geometric point for $X,$ and let $S_i$ be a $\pi_1(X,\bar x)$-set with $Y_i = Y(S_i).$ 
Let $S = \coprod_i S_i.$ Then $Y$ corresponds to a transitive $\pi_1(X,\bar x)$ set $T \subset S_1\times\dots \times S_r$ 
which surjects onto each $S_i.$ Viewing $T \subset S^r,$ we see that $T$ satisfies the conditions of Lemma \ref{lem:basicsetlemma}. 
The lemma now follows as in the proof of Lemma \ref{lem:indepGalois}.
\end{proof}

\begin{lemma}\label{lem:equiedlem} Let $A$ be a finite ring, $K$ a field, $\FF$ an $A$-local system
on a connected $K$-scheme $X$ equipped with a geometric point $\bar x,$ and $\rho_{\FF}:\pi_1(X, \bar x) \rightarrow \GL_n(A)$ the representation corresponding to $\FF.$
Let $Y$ be the covering of $X$ corresponding to $\ker \rho_{\FF}.$

Then $Y$ is the composite of Galois closures of the connected components of $Y(\FF).$
In particular, $$\ed_K(Y'/X) = \ed_K(Y(\FF)/X).$$
For any prime $p$ we also have
$$\ed_K(Y/X; p) = \ed_K(Y(\FF)/X; p).$$
\end{lemma}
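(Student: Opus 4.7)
The plan is to reduce the lemma to a combined application of Lemmas~\ref{lem:indepGalois} and \ref{lem:indepGaloisII}, via an explicit identification of $Y$ with the composite of Galois closures of the connected components of $Y(\FF)$.

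First I would translate everything into the language of $\pi_1(X,\bar x)$-sets. The cover $Y(\FF) \to X$ corresponds to the set $S = A^n$ with the action of $\pi_1(X,\bar x)$ through $\rho_\FF : \pi_1(X,\bar x) \to \GL_n(A)$. Its connected components correspond to the $\pi_1(X,\bar x)$-orbits $S_1,\dots,S_r$ in $S$, and for each orbit the stabilizers of points in $S_i$ determine $Y_i = Y(S_i)$. The Galois closure $\tilde Y_i$ of $Y_i$ is, by the explicit construction in the proof of Lemma \ref{lem:indepGalois}, the covering associated to the intersection $N_i := \bigcap_{s \in S_i} \mathrm{Stab}_{\pi_1(X,\bar x)}(s)$, i.e.\ the kernel of the action of $\pi_1(X,\bar x)$ on $S_i$.

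Next I would identify the composite $Y'$ of the $\tilde Y_i$. By the definition of composite recalled before Lemma~\ref{lem:indepGalois}, $Y'$ corresponds to $\bigcap_i N_i = \bigcap_{s \in S} \mathrm{Stab}(s)$, which is precisely the kernel of the $\pi_1(X,\bar x)$-action on the set $A^n$. Since $\GL_n(A)$ embeds into $\mathrm{Sym}(A^n)$ (an $A$-linear automorphism is determined by the images of the standard basis vectors in $A^n$), this kernel is exactly $\ker \rho_\FF$. Thus $Y' \iso Y$, proving the first assertion. The only mildly subtle point here is to verify the embedding $\GL_n(A) \hookrightarrow \mathrm{Sym}(A^n)$ carefully, but this is elementary.

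Finally, for the statements on essential dimension and essential $p$-dimension, I would argue as follows. Let $E \to X$ be any map of $K$-schemes. Applying Lemma \ref{lem:indepGalois} componentwise to $Y(\FF) \to X$ shows that $\ed_K(Y(\FF)_E/E) = \ed_K\bigl((\coprod_i \tilde Y_i)_E/E\bigr)$, since a Galois closure of $Y(\FF)$ in the sense of that lemma is exactly $\coprod_i \tilde Y_i$. Applying Lemma \ref{lem:indepGaloisII} to the connected Galois coverings $\tilde Y_i \to X$ with composite $Y$ then gives $\ed_K(Y_E/E) = \ed_K\bigl((\coprod_i \tilde Y_i)_E/E\bigr)$. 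Chaining these two equalities yields $\ed_K(Y_E/E) = \ed_K(Y(\FF)_E/E)$, which is the desired equality of essential dimensions. Taking the infimum over prime-to-$p$ dominant generically finite $E \to X$ gives the corresponding equality of essential $p$-dimensions. I do not expect any serious obstacle: the substantive content is the identification $Y' \iso Y$, and once this is in hand the essential dimension statements are purely formal from the two preceding lemmas.
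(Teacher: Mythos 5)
Your proof is correct and follows essentially the same route as the paper: identify the composite of the Galois closures of the components of $Y(\FF)$ with the covering corresponding to $\bigcap_{s \in A^n} \pi_1(X,\bar x)_s = \ker \rho_\FF$, then invoke Lemmas~\ref{lem:indepGalois} and~\ref{lem:indepGaloisII}. The only difference is that you spell out the chaining of the two lemmas in slightly more detail than the paper, which simply cites them; the substance is identical.
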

\begin{proof} Consider the action of $\pi_1(X,\bar x)$ on $A^n$ corresponding to $\FF.$
The connected components of $Y(\FF)$ correspond to the stabilizers $\pi_1(X)_s$ for $s \in A^n.$
A Galois closure of such a component corresponds to
$$ \cap_{g \in \pi_1(X)} g\pi_1(X)_sg^{-1} = \cap_{g \in \pi_1(X)} \pi_1(X)_{gs}.$$
Thus the composite of such Galois closures corresponds to $ \cap_s \pi_1(X)_s = \ker \rho_{\FF}.$

The lemma now follows from Lemmas \ref{lem:indepGalois} and \ref{lem:indepGaloisII}.

%
\end{proof}

%

\begin{lemma}\label{lem:redntoqbar} Let $K' \subset K$ be algebraically closed fields.
If $Y \rightarrow X$ is a finite \'etale covering of finite type $K'$-schemes then
\[\ed_{K'}(Y/X) = \ed_K(Y_K/X_K).\]
\end{lemma}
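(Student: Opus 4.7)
My plan is to prove the two inequalities separately. The inequality $\ed_K(Y_K/X_K) \le \ed_{K'}(Y/X)$ is immediate from the definition: any compression witnessing the right-hand side base-changes to a compression over $K$ of the same dimension.

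For the reverse inequality I will use a standard spread-out-and-specialize argument, the crucial input being that $K'$ is algebraically closed. Suppose $\ed_K(Y_K/X_K)=e$, realized by a dense open $U\subset X_K$, a $K$-variety $W$ of dimension $e$, a morphism $U\to W$, a finite \'etale cover $W'\to W$, and an isomorphism $Y_K|_U\cong U\times_W W'$. After replacing $W$ by the smooth locus of $W_{\red}$ (and $W'$ by its preimage) I may assume $W$ is smooth of dimension $e$ over $K$, which does not change $e$. All of this data is cut out by finitely many equations with coefficients in $K$, so it descends to a finitely generated $K'$-subalgebra $R\subset K$: there exist $W_R\to\Spec R$ smooth of relative dimension $e$, a finite \'etale cover $W'_R\to W_R$, an open subscheme $U_R\subset X_R:=X\times_{K'}\Spec R$, a morphism $U_R\to W_R$ over $\Spec R$, and an isomorphism $(Y\times_{K'}R)|_{U_R}\cong U_R\times_{W_R}W'_R$, all compatibly base-changing to the original $K$-data via $R\hookrightarrow K$. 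After shrinking $\Spec R$ to a principal open (which keeps $R$ finitely generated over $K'$) I may further arrange that $U_R\to\Spec R$ is surjective, using that a flat morphism of finite type is open.

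The concluding step is specialization. Since $R$ is a nonzero finitely generated algebra over the algebraically closed field $K'$, Hilbert's Nullstellensatz produces a $K'$-point $s\colon R\twoheadrightarrow K'$ of $\Spec R$. Taking fibers at $s$ yields a smooth $K'$-variety $W_s$ of dimension $e$, a finite \'etale cover $W'_s\to W_s$, an open subscheme $U_s\subset X$ that meets every irreducible component of $X$ in a dense open (by the surjectivity of $U_R\to\Spec R$ together with the fact that $X_R$ is irreducible over each component of $X$), a morphism $U_s\to W_s$, and an isomorphism $Y|_{U_s}\cong U_s\times_{W_s}W'_s$. This is a compression of $Y\to X$ over $K'$ of dimension $e$, so $\ed_{K'}(Y/X)\le e$, completing the proof.

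There is no deep obstacle here; the argument is essentially formal bookkeeping. The two points requiring mild care are ensuring that $U_s$ remains dense in $X$ upon specialization, which is handled by arranging $U_R\to\Spec R$ to be surjective, and ensuring that $\dim W_s=e$, which is handled by arranging $W_R$ smooth of relative dimension $e$; both are achieved by preliminary reductions on $W$ and by suitable shrinking of $\Spec R$ at the outset.
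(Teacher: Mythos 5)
Your argument is the same spread-out-and-specialize argument the paper uses (descend the compression to a finitely generated $K'$-subalgebra $R\subset K$, then specialize at a $K'$-point of $\Spec R$, which exists by the Nullstellensatz since $K'$ is algebraically closed). The paper's proof is terser and does not spell out the two points you rightly flag as needing care, namely that $U_s$ stays dense in $X$ and that $\dim W_s$ does not jump, so your added detail is welcome. One small caution: the preliminary step ``replace $W$ by the smooth locus of $W_{\red}$'' requires a bit more justification, since a priori $U\to W$ need not land dominantly in $W$ (so the preimage of the smooth locus need not be dense in $U$), and $U$ need not be reduced; this is remedied by first replacing $W$ by the scheme-theoretic image of $U$ and working component-by-component, or, more simply, by skipping the smoothness reduction altogether and instead shrinking $\Spec R$ after spreading out so that all fibers of $W_R\to\Spec R$ have dimension $\le e$, using Chevalley's constructibility of the fiber-dimension function together with the fact that $\Spec R$ is integral. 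Similarly, surjectivity of $U_R\to\Spec R$ alone does not immediately give density of $U_s$ in $X$ when $X$ has several irreducible components; one should shrink $\Spec R$ so that $U_R\cap(X_i\times_{K'}R)\to\Spec R$ is surjective for each component $X_i$ of $X$. Both are routine repairs, and the overall structure of your proof matches the paper's.
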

\begin{proof} Let $U_K \subset X_K$ be a dense open and $U_K \rightarrow W_K$ a morphism with
$\dim W_K = \ed_K(Y_K/X_K)$ such that $Y_K|_{U_K}$ arises from a finite cover $f':Y'_K \rightarrow W_K.$
Then $U_K,$ the finite cover $f'$ and the isomorphism $f^{\prime*} Y'_K \iso Y_K|_{U_K}$ are all defined over
some finitely generated $K'$-algebra $R \subset K.$ Specializing by a map $R \rightarrow K'$
produces the required data for the covering $Y \rightarrow X.$
\end{proof}

\begin{para}\label{defn:ed} It will be convenient to make the following definition. Suppose that $Y \rightarrow X$ is a finite \'etale covering
of finite type $R$-schemes, where $R$ is a domain of characteristic $0.$ Set
$ \ed(Y/X) = \ed_{\bar K}(Y/X)$ where $\bar K$ is any algebraically closed field containing $R,$ and similarly for $ \ed(Y/X; p)$
By Lemma \ref{lem:redntoqbar} this does not depend on the choice of $\bar K.$
\end{para}

\begin{lemma}\label{lem:brauer} Let $K$ be an algebraically closed field, and $Y \rightarrow X$ a finite Galois
covering of connected, finite type $K$-schemes with Galois group $G.$ Let $H \subset G$ be a central, cyclic subgroup of order $n$
with $\text{\rm char}(K)\nmid n.$ Then for any prime $p$ we have
$$ \ed(Y \to X; p) \geq \ed(Y/H\to X; p) $$
with equality if $p \nmid n.$
\end{lemma}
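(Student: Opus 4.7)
The plan is to translate the statement into a purely $\pi_1$-theoretic assertion and then perform two successive prime-to-$p$ modifications to transfer a compression of $Y/H\to X$ into a compression of $Y\to X$. Using Lemma~\ref{lem:equiedlem} (and Lemmas~\ref{lem:indepGalois}, \ref{lem:indepGaloisII}) we may assume $X$ is connected with geometric point $\bar x$, and that $Y$ is the connected Galois cover corresponding to a surjection $\rho\colon \pi_1(X,\bar x)\onto G$; then $Y/H\to X$ corresponds to $\bar\rho=\pi\circ\rho\colon\pi_1(X)\onto G/H$, where $\pi\colon G\to G/H$ is the quotient. By Lemma~\ref{lem:coverlemI} (applied to make the cover on $W$ Galois), a compression of a connected Galois cover reduces to a factorization of $\rho$ on some dense open $U$ through a homomorphism $\pi_1(W)\to G$.

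The inequality $\ed(Y\to X;p)\geq \ed(Y/H\to X;p)$ is essentially tautological: any prime-to-$p$ cover $E\to X$ and compression $U\subset E\to W$ providing a factorization of $\rho|_{\pi_1(U)}$ through some $\pi_1(W)\to G$ gives, by composing with $G\to G/H$, a factorization of $\bar\rho|_{\pi_1(U)}$ through $\pi_1(W)\to G/H$, hence a compression of $Y/H\to X$ of the same dimension.

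For the equality when $p\nmid n$, suppose $E\to X$ is a prime-to-$p$ map and $U\subset E\to W$ provides $\sigma\colon\pi_1(W)\to G/H$ with $\bar\rho|_{\pi_1(U)}=\sigma\circ(\pi_1(U)\to\pi_1(W))$ and $\dim W=e=\ed(Y/H\to X;p)$. The first step is to lift $\sigma$ to $G$ after a prime-to-$p$ enlargement. Since $H$ is central in $G$, it is a trivial $\pi_1(W)$-module (via $\sigma$), and the obstruction to lifting is a class $\alpha\in H^2(\pi_1(W),H)$ whose restriction to $H^2(\pi_1(U),H)$ vanishes (because $\bar\rho|_{\pi_1(U)}$ already lifts to $\rho|_{\pi_1(U)}$). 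Let $P\subset\pi_1(W)$ be a pro-$p$ Sylow subgroup; since $|H|=n$ is prime to $p$ and $\text{char}(K)\nmid n$, standard profinite cohomology gives $H^i(P,H)=0$ for $i>0$, and by continuity $H^2(P,H)=\varinjlim_{U'\supset P}H^2(U',H)$ over open subgroups $U'\supset P$. So $\alpha|_{U'}=0$ for some such $U'$; since $[\pi_1(W):U']$ divides the supernatural index $[\pi_1(W):P]$ which has no $p$-part, this $U'$ has prime-to-$p$ index. Let $W'\to W$ be the corresponding prime-to-$p$ étale cover, and $\tilde\sigma\colon\pi_1(W')\to G$ a lift of $\sigma|_{\pi_1(W')}$. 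Setting $E'=E\times_W W'$ yields a prime-to-$p$ cover of $X$.

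The second step is to match $\tilde\sigma$ with $\rho$ on $\pi_1(E')$. Both $\rho|_{\pi_1(E')}$ and $\tilde\sigma\circ(\pi_1(E')\to\pi_1(W'))$ are lifts of $\bar\rho|_{\pi_1(E')}$ to $G$; since $H$ is central, they differ by a homomorphism $\phi\colon\pi_1(E')\to H$. Its kernel has index dividing $|H|=n$, hence prime to $p$, and the corresponding cover $E''\to E'$ is a prime-to-$p$ étale cover on which $\rho|_{\pi_1(E'')}=\tilde\sigma\circ(\pi_1(E'')\to\pi_1(W'))$. Therefore $E''\to X$ is prime-to-$p$ and $E''\to W'$ compresses $Y|_{E''}\to E''$ with $\dim W'=e$, giving $\ed(Y\to X;p)\le e$ and proving equality. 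The main obstacle is the cohomological Step (a): identifying the obstruction as a central class in $H^2(\pi_1(W),H)$ and verifying that prime-to-$p$ restrictions suffice to kill it; once that is in hand, Step (b) is a straightforward $H^1$-calculation exploiting the prime-to-$p$ order of $H$.
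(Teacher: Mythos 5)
Your proof is correct and has the same overall two-stage obstruction-theoretic shape as the paper's, but the mechanism you use to kill the degree-two obstruction is genuinely different. The paper pushes the lifting obstruction into \'etale cohomology $H^2(X',H)\cong H^2(X',\mu_n)$ (using $\mathrm{char}(K)\nmid n$ and $K$ algebraically closed to identify $H$ with $\mu_n$), interprets it as a Brauer class of order dividing $n$, and then invokes \cite[Lemma 4.17]{FarbDennis} to split that class by an \'etale cover of $n$-power (hence prime-to-$p$) degree. You instead work in continuous group cohomology $H^2(\pi_1(W),H)$, observe that $H$ is a trivial $\pi_1(W)$-module because it is central, and kill the class by restricting to an open subgroup $U'\supset P$ for $P$ a pro-$p$ Sylow of $\pi_1(W)$, using vanishing of $H^{>0}(P,H)$ together with the continuity isomorphism $H^*(P,H)=\varinjlim_{U'\supset P}H^*(U',H)$. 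This is a valid replacement and is arguably more self-contained, since it avoids the Brauer-group input and the identification $H\cong\mu_n$. Your step (b), passing to $\ker\phi$ to make the two lifts of $\bar\rho$ agree, is the exact analogue of the paper's final $H^1(X,H)$ step. A few cosmetic points: the observation that $\alpha$ restricts to zero on $\pi_1(U)$ is stated but never used (the Sylow argument supersedes it); the fiber product should be taken over the open $U$, i.e.\ $U'=U\times_W W'$ rather than $E\times_W W'$, since only $U\to W$ is defined; and, as in the paper's own proof, there are the usual implicit normalizations (choices of base points so that the factorization of $\bar\rho|_{\pi_1(U)}$ through $\pi_1(W)$ holds on the nose rather than merely up to conjugacy, and the reduction to the case that the relevant restricted covers remain connected) which are routine but worth acknowledging.
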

\begin{proof} To show $ \ed(Y/X; p) \geq \ed(Y/H\to X; p),$ after shrinking $X,$ we may assume there is
a map $f:X \rightarrow X'$ such that $Y = f^*Y'$ for a finite \'etale covering $Y' \rightarrow X',$ which may be
assumed to be connected and Galois by Lemma \ref{lem:coverlemI}. The Galois group of $Y'/X'$ is necessarily equal to $G,$ and we have $f^*(Y'/H) \iso Y/H.$

For the converse inequality when $p \nmid n,$ we may assume there is a map $f:X \rightarrow X'$ such that $Y/H = f^*Y'$
for a finite \'etale covering $Y' \rightarrow X',$ which we may again assume is connected and Galois with group $G/H.$
The image, $c,$ of $Y' \rightarrow X'$ under 
$$ H^1(X', G/H) \rightarrow H^2(X', H) \iso H^2(X', \mu_n) $$
is the obstruction to lifting $Y' \rightarrow X'$ to a $G$-covering. 
Here, for the final isomorphism, we are using that $K$ is algebraically closed of characteristic prime to $n.$ Viewing $c$ as a Brauer class, we see that it has order dividing $n.$ This implies that (after perhaps shrinking $X$ further) there is an \'etale covering 
$X_1' \rightarrow X'$ of order dividing a power $n$ such that $c|_{X_1}$ is trivial \cite[Lemma 4.17]{FarbDennis}. In particular, $X_1' \rightarrow X'$ has order prime to $p.$ Replacing $Y \rightarrow X \rightarrow X'$ by their pullbacks to $X_1',$ we may assume that $c = 0,$ and that $Y' = Y''/H$ for some Galois covering $Y'' \rightarrow X'$ with group $G.$

The difference between the $G$-coverings $Y \rightarrow X$ and $f^*Y'' \rightarrow X$ is measured by a class in
$H^1(X,H).$ After replacing $X$ by the $H$-covering corresponding to this class, we may assume that this class is trivial,
and so $Y \iso f^*Y''.$ This shows that $ \ed(Y/X; p) \leq \ed(Y/H\to X; p),$
\end{proof}

\section{Essential dimension and moduli of abelian varieties}\label{sec:moduli}
\numberwithin{equation}{subsection}
\subsection{Ordinary finite flat group schemes}\label{subsec:ffgs}
In this subsection, we fix a prime $p,$ and we consider a complete discrete valuation ring $V$ of characteristic $0,$
with perfect residue field $k$ of characteristic $p,$ and a uniformizer $\pi \in V.$

By a {\it finite flat group scheme} on a $\Z_p$-scheme  $X$ we will always mean a finite flat, commutative, group scheme on $X$ of $p$-power order.  A finite flat group scheme on $X$ is called {\em ordinary} if \'etale locally on $X,$ it is an extension of a constant group scheme $\oplus_{i \in I} \Z/p^{n_i}\Z$ by a group scheme of the form $\oplus_{j \in J} \mu_{p^{m_j}}$ for integers $n_i,m_j \geq 1.$ In this subsection we study the classification of these extensions.

\begin{para} \label{para:ffgssetup}
Now let $\tilde X = \Spec A$ be an affine $\Z_p$-scheme, and set $X = \tilde X\otimes \Q.$
Let $n \geq 1,$ and consider the exact sequence of sheaves
$$ 1 \rightarrow \mu_{p^n} \rightarrow \GG_m \overset{p^n}\rightarrow  \GG_m \rightarrow 1$$
in the flat topology of $\tilde X.$ Taking flat cohomology of this sequence and its restriction to $X$
we obtain a commutative diagram with exact rows
$$\xymatrix{
1 \ar[r] & A^\times/(A^\times)^{p^n} \ar[r]\ar[d] & H^1(\tilde X, \mu_{p^n}) \ar[r]\ar[d] & H^1(\tilde X, \GG_m) \ar[d] \\
1 \ar[r] & A[1/p]^\times/(A[1/p]^\times)^{p^n}\ar[r] & H^1(X, \mu_{p^n}) \ar[r] & H^1(X, \GG_m)
 }$$

The group $H^1(\tilde X, \GG_m)$ classifies line bundles on $\tilde X.$ Hence, if $A$ is local it vanishes,
and this can be used to classify extensions of $\Z/p^n\Z$ by  $\mu_{p^n}$ as finite flat group schemes. We have
$$ \Ext^1_{\tilde X}(\Z/p^n\Z, \mu_{p^n}) \iso  H^1(\tilde X, \mu_{p^n}) \iso A^\times/(A^\times)^{p^n}.$$
Here and below, the group on the left denotes extensions as sheaves of $\Z/p^n\Z$-modules.

Similarly, we can classify extensions of $\Q_p/\Z_p$ by $\mu_{p^\infty} = \lim_n \mu_{p^n}$
as $p$-divisible groups. If $\mathcal E$ is such an extension, then $\mathcal E[p^n]$ is an extension 
of $\Z/p^n\Z$ by $\mu_{p^n}$ and we have 
$$ \hat \theta_A: \Ext^1_{\tilde X}(\Q_p/\Z_p,\mu_{p^\infty}) \iso \ilim_n A^\times/(A^\times)^{p^n}.$$

If $A$ is complete and local with residue field $k,$ then the right hand side may be identified with 
$A^{\times,1} \subset A^\times,$ the subgroup of units which map to $1$ in $k^\times.$ Thus we have 
$$ \hat \theta_A: \Ext^1_{\tilde X}(\Q_p/\Z_p,\mu_{p^\infty}) \iso A^{\times,1}.$$
\end{para}

 \begin{para}
 For the rest of this subsection we assume that $A = V\lps x_1, \dots, x_n \rps.$
 Then $H^1(X, \GG_m) = 0$ by \cite{SGA2}*{XI, Thm 3.13}, and we have a commutative diagram
$$\xymatrix{
A^\times/(A^\times)^{p^n} \ar[r]^\sim\ar[d] & H^1(\tilde X, \mu_{p^n}) \ar[d] \\
A[1/p]^\times/(A[1/p]^\times)^{p^n} \ar[r]^{\phantom{ttttttt}\sim} & H^1(X, \mu_{p^n})
 }$$

\end{para}

\begin{para}
We call an element of $\Ext^1_X(\Z/p^n\Z, \mu_{p^n}) = H^1(X,\mu_{p^n})$ {\em syntomic} if it arises
from an element of $A^\times,$ or equivalently from a class in $\Ext^1_{\tilde X}(\Z/p^n\Z, \mu_{p^n}),$
and we denote by $\Ext^{1,\syn}_X(\Z/p^n\Z, \mu_{p^n}) \subset \Ext^1_X(\Z/p^n\Z, \mu_{p^n})$ the subgroup of syntomic elements.
\end{para}

\begin{lemma}\label{lem:synI} A syntomic class in $\Ext^1_X(\Z/p^n\Z, \mu_{p^n})$ arises from a unique
class in $\Ext^1_{\tilde X}(\Z/p^n\Z, \mu_{p^n}).$
\end{lemma}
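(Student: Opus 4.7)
The plan is to unwind what the lemma says in concrete terms via the two vertical isomorphisms in the preceding diagram, and then reduce everything to a statement about units in the power-series ring $A = V\lps x_1,\dots,x_n\rps$.

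First, I would observe that through the identifications
$\Ext^1_{\tilde X}(\Z/p^n\Z,\mu_{p^n}) \simeq A^\times/(A^\times)^{p^n}$ and
$\Ext^1_X(\Z/p^n\Z,\mu_{p^n}) \simeq A[1/p]^\times/(A[1/p]^\times)^{p^n}$
established just before the lemma, the claim is equivalent to the injectivity of the natural map
\[ A^\times/(A^\times)^{p^n} \longrightarrow A[1/p]^\times/(A[1/p]^\times)^{p^n}. \]
So I need to show: if $u\in A^\times$ and $u = v^{p^n}$ for some $v\in A[1/p]^\times$, then already $u = w^{p^n}$ for some $w\in A^\times$.

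Next I would exploit the structure of $A$. Since $V$ is a complete DVR with uniformizer $\pi$ and $p = (\text{unit})\cdot \pi^e$, we have $A[1/p] = A[1/\pi]$. The ring $A = V\lps x_1,\dots,x_n\rps$ is a regular local ring, hence a UFD, and $A/\pi A = k\lps x_1,\dots,x_n\rps$ is a domain, so $\pi$ is prime in $A$. Given $v\in A[1/\pi]^\times$ with $v^{p^n}=u\in A^\times$, write $v = a/\pi^m$ with $a\in A$ and $m\ge 0$ chosen minimal (so $\pi\nmid a$ when $m>0$). The relation becomes
\[ a^{p^n} = u\,\pi^{m p^n}. \]
If $m>0$, then $\pi \mid a^{p^n}$, and primality of $\pi$ forces $\pi \mid a$, contradicting minimality of $m$. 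Hence $m=0$ and $v = a\in A$. Since $A$ is local and $v^{p^n}=u$ is a unit, $v$ cannot lie in the maximal ideal, so $v\in A^\times$. This gives the desired injectivity, and hence uniqueness of the syntomic lift.

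There is no real obstacle here; the only thing to be careful about is confirming that $A[1/p] = A[1/\pi]$ and that $\pi$ remains prime after passing to $A$, both of which are immediate from $V$ being a DVR and $A$ being a regular (in particular normal) local ring with $A/\pi A$ a domain. The same argument would of course work for any regular local $\Z_p$-algebra in which $p$ is a nonzerodivisor and the reduction mod $\pi$ is a domain, but the present setup is all we need.
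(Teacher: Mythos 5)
Your proof is correct, and it reduces to the same key claim as the paper's: the injectivity of $A^\times/(A^\times)^{p^n} \to A[1/p]^\times/(A[1/p]^\times)^{p^n}$. The difference is how that injectivity is established. The paper's proof is a single line: since $A$ is normal, an element of $A^\times$ that is a $p^n$-th power in $A[1/p]$ (equivalently, in $\mathrm{Frac}(A)$) is already a $p^n$-th power in $A$ — the relevant $v$ is integral over $A$ and lies in the fraction field, hence lies in $A$, and being a root of a unit in the local ring $A$ it is itself a unit. You instead write $v=a/\pi^m$ with $m$ minimal and exploit that $\pi$ is prime in $A$ (equivalently $A/\pi A$ is a domain) to force $m=0$. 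This is a more explicit, self-contained version of the same fact; what it buys is that a reader does not need to recall the characterization of normality via monic polynomials, at the (small) cost of invoking the UFD structure rather than the weaker hypothesis of normality. Both are fine here since $A=V\lps x_1,\dots,x_n\rps$ is regular local, hence both normal and a UFD.
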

\begin{proof} If $a \in A^\times$ is a $p^{n\text{\rm th}}$-power in $A[1/p]$ then it is a $p^{n\text{\rm th}}$ power in $A,$ as $A$ is normal.
Hence the map $A^\times/(A^\times)^{p^n} \rightarrow A[1/p]^\times/(A[1/p]^\times)^{p^n}$ is injective, and the lemma follows from the description of $\Ext^1$'s above.
\end{proof}

\begin{lemma}\label{lem:synII} Let $B = V\lps y_1, \dots , y_s \rps$  for some integer $s \geq 0,$ and
$$ f: \tilde X \rightarrow \tilde Y = \Spec B $$ a local flat map of complete local $V$-algebras.
Suppose that $c \in H^1(Y,\mu_{p^n}),$ where $Y = \Spec B[1/p],$ and that $f^*(c) \in H^1(X,\mu_{p^n})$ is syntomic. Then $c$ is syntomic.
\end{lemma}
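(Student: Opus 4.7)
The plan is to reduce the statement to the manifest compatibility of $f^*$ with the $\pi$-adic valuation, where $\pi \in V$ is a uniformizer. Since $V$ has residue characteristic $p$, inverting $p$ is the same as inverting $\pi$; in particular $A[1/p] = A[1/\pi]$ and $B[1/p] = B[1/\pi]$. Both $A$ and $B$ are regular local rings, hence factorial by Auslander--Buchsbaum, and $\pi$ is a prime in each since $A/\pi$ and $B/\pi$ are power series rings over the residue field $k$. A direct unique-factorization argument then gives canonical splittings
$$ B[1/p]^\times \iso \pi^{\Z}\times B^\times, \qquad A[1/p]^\times \iso \pi^{\Z}\times A^\times, $$
and, after passage to quotients by $p^n$-th powers,
$$ B[1/p]^\times/(B[1/p]^\times)^{p^n} \iso \Z/p^n\Z \oplus B^\times/(B^\times)^{p^n}, $$
with the analogous decomposition for $A$.

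I would then identify $\Ext^{1,\syn}_Y(\Z/p^n\Z,\mu_{p^n})$ inside this splitting. By definition it is the image of $B^\times/(B^\times)^{p^n}$ in $B[1/p]^\times/(B[1/p]^\times)^{p^n}$; Lemma \ref{lem:synI} applies verbatim to $B$ (the argument uses only normality), so this map is injective and its image is precisely the summand $\{0\}\oplus B^\times/(B^\times)^{p^n}$. Equivalently, $\Ext^{1,\syn}_Y$ is the kernel of the reduced $\pi$-adic valuation
$$ \bar v_\pi : B[1/p]^\times/(B[1/p]^\times)^{p^n} \longrightarrow \Z/p^n\Z, $$
and the analogous statement holds over $X$.

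To conclude, since $f: B \to A$ is a $V$-algebra homomorphism we have $f(\pi) = \pi$, so $\bar v_\pi$ commutes with $f^*$. The hypothesis that $f^*(c)$ is syntomic therefore gives $\bar v_\pi(c) = \bar v_\pi(f^*(c)) = 0$, whence $c$ is syntomic. I do not foresee a serious obstacle: the only mildly delicate point, which is also the logical heart of the argument, is the identification of $\Ext^{1,\syn}$ with the kernel of $\bar v_\pi$, and this rests squarely on the factoriality of $B$ together with Lemma \ref{lem:synI}.
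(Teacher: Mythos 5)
Your plan is correct and is essentially the paper's own argument: the paper also factors an element of $B[1/p]^\times$ representing $c$ as $b_0\pi^i$ with $b_0 \in B^\times$ (using that $B$ is a UFD with $\pi$ prime), uses syntomicity of $f^*(c)$ to compare divisors on $A$ and conclude $p^n \mid i$, which is exactly your compatibility of $\bar v_\pi$ with $f^*$. Your reorganization around the splitting $B[1/p]^\times \cong \pi^{\Z}\times B^\times$ and the induced valuation map is a cleaner packaging of the same computation (and makes the invocation of Lemma \ref{lem:synI} redundant, since injectivity already falls out of the splitting), but it is not a different method.
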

\begin{proof} Let $b \in B[1/p]^\times$ be an element giving rise to $c.$ Since $B$ is a unique factorization domain we may
write $b = b_0\pi^i$ with $b_0 \in B^\times$ and $i \in \Z.$ Since $f^*(c)$ is syntomic, we may write
$\pi^i = a_0a^{p^n}$ with $a_0 \in A^\times$ and $a \in A[1/p]^\times.$
Comparing the images of both sides in the group of divisors on $A,$ one sees that $p^n|i.$ So $c$ arises from $b_0.$
\end{proof}

\begin{para}\label{para:defntheta} Let $\gm_A$ be the maximal ideal of $A,$ and $\bar \gm_A$ its image in $A/\pi A.$
The natural map
$$ \bar\gm_A/\bar \gm_A^2 \overset {a \mapsto 1+ a} \rightarrow k^\times\backslash (A/(\pi,\gm_A^2))^\times$$
is a bijection; both sides are $k$-vector spaces spanned by $x_1,\dots, x_n.$
We denote by $\theta_A$ the composite
$$ \theta_A: \Ext_X^{1,\syn} (\Z/p\Z, \mu_p) \iso A^\times/(A^\times)^p \rightarrow k^\times\backslash (A/(\pi,\gm_A^2))^\times \iso \bar\gm_A/\bar \gm_A^2.$$
Here we have used Lemma \ref{lem:synI} to identify $ \Ext_X^{1,\syn} (\Z/p\Z, \mu_p)$ and
$ \Ext_{\tilde X}^1 (\Z/p\Z, \mu_p).$
\end{para}

\begin{lemma}\label{lem:synIII} With the notation of Lemma \ref{lem:synII}, suppose that
$$L \subset  \Ext_Y^{1,\syn} (\Z/p\Z, \mu_p)$$ is a subset such that the $k$-span of $\theta_A(f^*(L))$ is
$\bar\gm_A/\bar \gm_A^2.$ Then $f$ is an isomorphism.
\end{lemma}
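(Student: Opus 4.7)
The plan is to prove the lemma by showing that $f$ induces a surjection on Zariski cotangent spaces, and then upgrading this to an isomorphism using the flatness hypothesis.

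First I would verify the naturality of $\theta$ with respect to the local $V$-algebra map $f$. Concretely, for $b \in B^\times$ representing a syntomic class, I claim $\theta_A(f^*[b]) = \bar f(\theta_B[b])$, where $\bar f : \bar\gm_B/\bar\gm_B^2 \to \bar\gm_A/\bar\gm_A^2$ is the linear map obtained from $f$ by reducing modulo $\pi$ and passing to the associated graded. This is a direct unwinding of the definition in \ref{para:defntheta}: because $f$ is local, $b$ and $f(b)$ have the same image $b_0 \in k^\times$, so the class of $b/b_0 - 1$ in $\bar\gm_B/\bar\gm_B^2$ maps under $\bar f$ to the class of $f(b)/b_0 - 1$ in $\bar\gm_A/\bar\gm_A^2$, compatibly with the identification $k^\times \backslash (A/(\pi, \gm_A^2))^\times \iso \bar\gm_A/\bar\gm_A^2$ (and similarly for $B$).

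With naturality in hand, the hypothesis becomes exactly the assertion that $\bar f$ is surjective. Since $f$ is a $V$-algebra map, the class of $\pi$ lies in the image of $\gm_B/\gm_B^2 \to \gm_A/\gm_A^2$; combined with the surjectivity of $\bar f$, this forces the full cotangent map to be surjective. By the complete form of Nakayama's lemma applied to the Noetherian local ring $A$, the map $f: B \to A$ is itself surjective. Finally, flatness of $f$ together with surjectivity gives the isomorphism: setting $I = \ker f$ and tensoring $0 \to I \to B \to A \to 0$ over $B$ with $A$ yields $I/I^2 = I \otimes_B A = 0$ (using $\Tor_1^B(A,A) = 0$ by flatness), so $I = 0$ by Nakayama.

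The only substantive verification is the naturality of $\theta$; the remainder is standard commutative algebra of complete local Noetherian rings. I do not expect any serious obstacle here, as the hypothesis was essentially designed to express surjectivity of the cotangent map.
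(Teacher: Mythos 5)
Your proof is correct and follows essentially the same path as the paper's: establish functoriality of $\theta$, deduce surjectivity of the cotangent map mod $\pi$ (hence of the full cotangent map, using that $f$ is a $V$-algebra map), conclude $f$ is surjective by complete Nakayama, and finish using flatness. The only cosmetic difference is at the last step, where the paper invokes injectivity of $B\to A$ directly from faithful flatness of the local flat map $f$, whereas you recover $\ker f=0$ from the $\Tor_1$ vanishing; both are standard and equivalent in substance.
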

\begin{proof} By functoriality of the association $A \mapsto \theta_A,$ we have
$\theta_A(f^*(L)) = f^*(\theta_B(L)).$ Hence the $k$-span of $\theta_A(f^*(L))$ is
contained in image of $\bar \gm_B/\bar \gm_B^2.$ It follows that
$\bar \gm_B/\bar \gm_B^2$ surjects onto $\bar \gm_A/\bar \gm_A^2.$
Since $A$ and $B$ are complete local $V$-algebras, this implies that $B,$
which is a subring of $A,$ surjects onto $A.$ Hence $f$ is an isomorphism.
\end{proof}

%
%

\subsection{Monodromy of $p$-torsion in an abelian scheme}
We now use the results of the previous section to obtain results about the essential dimension of covers of
of the moduli space of abelian varieties.

\begin{para}\label{para:univdef} Recall that an abelian scheme $\A$ over a $\Z_p$-scheme is called ordinary if the group scheme
$\A[p^n]$ is ordinary for all $n \geq 1.$ This is equivalent to requiring the condition for $n=1.$

Let $k$ be an algebraically closed field of characteristic $p > 0,$ and let $V$ be a complete discrete valuation ring with residue field $k,$ so that $W(k) \subset V.$ Let $\A_0$ be an abelian scheme over $k$ of dimension $g.$ We assume that $\A_0$ is ordinary. Since $k$ is algebraically closed, this implies that $\A_0[p^\infty]$ is isomorphic to
$(\Q_p/\Z_p)^g \oplus \mu_{p^\infty}^g.$

Consider the functor $D_{\A_0}$ on the category of Artinian $V$-algebras $C$ with residue field $k,$
which attaches to $C$ the set of isomorphism classes of deformations of $\A_0$
to an abelian scheme over $C.$ Recall \cite[\S 2]{Katz:Serre-Tate} that $D_{\A_0}$ is equivalent to the functor
which attaches to $C$ the set of isomorphism classes of deformations of $\A_0[p^\infty],$ and that $D_{\A_0}$
is pro-representable
by a formally smooth $V$-algebra $R$ of dimension $g^2,$ called the universal deformation $V$-algebra of $\A_0.$

We denote by $\A_R$ the universal (formal) abelian scheme over $R.$
Note that although $\A_R$ is only a formal scheme over $R,$ the torsion group schemes $\A_R[p^n]$ are
finite over $R,$ and so can be regarded as genuine $R$-schemes.

Since $\A_0$ is ordinary the $p$-divisible group $\A_R[p^\infty] = \lim_n \A_R[p^n]$ is an extension of
$(\Q_p/\Z_p)^g$ by $\mu_{p^\infty}^g.$ Hence $\A_R[p]$ is an extension of $(\Z/p\Z)^g$ by $\mu_p^g.$
This extension class is given by a $g\times g$ matrix of classes $(c_{i,j})$ with $c_{i,j} \in \Ext_R^1(\Z/p\Z, \mu_p).$
\end{para}

\begin{lemma}\label{lem:nondegen} With the notation of \S \ref{subsec:ffgs}, the elements
$\theta_R(\{c_{i,j}\}_{i,j})$ span $\bar \gm_R/\bar \gm_R^2.$
\end{lemma}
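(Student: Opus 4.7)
The plan is to invoke Serre--Tate theory to identify the universal deformation ring $R$ with a formal power series ring in the Serre--Tate coordinates, and then to show that the classes $\theta_R(c_{i,j})$ coincide with these coordinates modulo $\bar\gm_R^2.$

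Since $\A_0$ is ordinary and $k$ is algebraically closed, $\A_0[p^\infty] \cong (\Q_p/\Z_p)^g \oplus \mu_{p^\infty}^g,$ and by Serre--Tate the deformation functor $D_{\A_0}$ is equivalent to the deformation functor of the $p$-divisible group $\A_0[p^\infty].$ For any Artinian $V$-algebra $C$ with residue field $k,$ the multiplicative and \'etale parts of any $p$-divisible deformation admit no nontrivial lifts, so $\A_C[p^\infty]$ is canonically an extension of $(\Q_p/\Z_p)^g$ by $\mu_{p^\infty}^g.$ Hence $D_{\A_0}(C) \cong \Ext^1_C(\Q_p/\Z_p,\mu_{p^\infty})^{g^2},$ which via the isomorphism $\hat\theta_C$ from \ref{para:ffgssetup} is identified with $(C^{\times,1})^{g^2}.$ Passing to $R$ then produces Serre--Tate coordinates $q_{i,j} \in R^{\times,1}$ (the matrix entries of the universal extension class) and an isomorphism $R \cong V\lps T_{i,j} \rps$ with $T_{i,j} := q_{i,j} - 1$ forming a regular system of parameters; this is the content of the Serre--Tate theorem, cf.~\cite[\S 2]{Katz:Serre-Tate}.

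Next, by functoriality of the Kummer sequence underlying the construction of $\hat\theta,$ the operation of restricting an extension of $\Q_p/\Z_p$ by $\mu_{p^\infty}$ to its $p$-torsion corresponds to the natural surjection $R^{\times,1} = \ilim_n R^\times/(R^\times)^{p^n} \onto R^\times/(R^\times)^p.$ Consequently $c_{i,j}$ corresponds to the class of $q_{i,j}$ under the identification $\Ext^1_R(\Z/p\Z,\mu_p) \cong R^\times/(R^\times)^p,$ and unwinding the definition of $\theta_R$ in \ref{para:defntheta} yields
$$ \theta_R(c_{i,j}) = (q_{i,j} - 1) \bmod (\pi, \gm_R^2) = \bar T_{i,j} \in \bar\gm_R/\bar\gm_R^2. $$
Since the $T_{i,j}$ form a regular system of parameters for the formally smooth $V$-algebra $R,$ their reductions $\bar T_{i,j}$ form a $k$-basis of $\bar\gm_R/\bar\gm_R^2,$ proving the claim.

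The main obstacle I anticipate is bookkeeping: verifying carefully that the universal extension class, read off through $\hat\theta_R,$ really does yield the standard Serre--Tate parameters, and that the correspondence $c_{i,j} \leftrightarrow q_{i,j} \bmod (R^\times)^p$ is induced by $p$-torsion restriction. Both compatibilities ought to follow directly from the naturality of the flat-cohomology construction of $\hat\theta$ in the Kummer sequence, together with the uniqueness of the syntomic lift in Lemma \ref{lem:synI}.
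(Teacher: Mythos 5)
Your proof is correct, but it runs along a genuinely different track from the paper's. You invoke the full Serre--Tate \emph{coordinate} theorem: that the canonical coordinates $q_{i,j}=\hat\theta_R(\hat c_{i,j})$ satisfy $R\cong V\lps T_{i,j}\rps$ with $T_{i,j}=q_{i,j}-1$ a regular system of parameters, and then you just read off that $\theta_R(c_{i,j})=\bar T_{i,j}$ is a basis of $\bar\gm_R/\bar\gm_R^2.$ The paper deliberately uses less: it only takes from Katz that $D_{\A_0}$ is pro-represented by some formally smooth $R$ of dimension $g^2$ together with the extension description of deformations, and then argues by contradiction. Namely, if the $k$-span $L$ of the $\theta_R(c_{i,j})$ were a proper subspace, one could choose a surjection $R/(\pi,\gm_R^2)\onto k[x]/x^2$ killing $L$; under $\hat\theta$ this specialization would produce a nonzero tangent vector of $\Spf R$ along which $\A_R[p^\infty]$ specializes to the split extension, contradicting pro-representability. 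Your route buys an explicit basis rather than merely spanning, at the cost of quoting the sharper form of Serre--Tate and of verifying the compatibility between $\hat\theta$ on $p$-divisible extensions and $\theta$ on their $p$-torsion (which you flag; the paper quietly relies on the same compatibility when it says the $\hat c_{i,j}$ ``reduce to'' the $c_{i,j}$, so this is not an extra burden). The paper's route is more self-contained and matches the minimal package of Serre--Tate facts it has already set up.
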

\begin{proof} Consider the isomorphism
$$ \hat \theta_R: \Ext_R^1(\Q_p/\Z_p, \mu_{p^\infty}) \iso R^{\times,1} $$
introduced in \S \ref{subsec:ffgs}. The universal extension of $p$-divisible groups over $R$ gives
rise to a $g\times g$ matrix of elements $(\hat c_{i,j}) \in \Ext_R^1(\Q_p/\Z_p, \mu_{p^\infty})$
which reduce to $(c_{i,j}).$

Let $L \subset \bar \gm_R/\bar \gm_R^2$ be the $k$-span of the images of the elements $\hat\theta_R(\hat c_{i,j})-1,$
or equivalently, the elements $\theta_R(c_{i,j})-1,$ and set $R' = k \oplus L \subset R/(\pi,\gm_R^2).$
Using the isomorphism $\hat \theta_{R'},$ one sees that $\A_R[p^\infty]|_{R/(\pi,\gm_R^2)}$ is defined over $R'.$
If $L \subsetneq \bar \gm_R/\bar \gm_R^2,$
then there exists a surjective map $R/(\pi,\gm_R^2) \rightarrow k[x]/x^2$ which sends $L$ to zero.
Specializing $\A_R[p^\infty]|_{R/(\pi,\gm_R^2)}$ by this map
induces the trivial deformation of $\A_0[p^\infty]$ (that is the split extension
of $(\Q_p/\Z_p)^g$ by $\mu_{p^\infty}$) over $\Spec k[x]/x^2.$
This contradicts the fact that $R$ pro-represents $D_{\A_0}.$
Hence $L = \bar \gm_R/\bar \gm_R^2,$ which proves the lemma.
\end{proof}

\begin{para}
Let $A$ be a quotient of $R$ which is formally smooth over $V.$ That is, $A$ is isomorphic as a complete $V$-algebra to $V\lps x_1, \dots x_n \rps.$ As in \S \ref{subsec:ffgs}, we set $X = \Spec A[1/p]$ and $\tilde X = \Spec A.$
\end{para}

\begin{lemma}\label{lem:essdimI} Let $B = V\lps y_1, \dots y_s \rps$  for some integer $s \geq 0,$ and let
$$ f: \tilde X \rightarrow \tilde Y = \Spec B $$ be a local flat map of complete local $V$-algebras. Set
$Y = \Spec B[1/p].$  Suppose that $k$ is algebraically closed, and that there exists an $\F_p$-local system $\L$ on $Y$
which is an extension of $(\Z/p\Z)^g$ by $\mu_p^g$ such that $f^*\L \iso \A_R[p]|_X$ as extensions of $\F_p$-local systems.
Then $f$ is an isomorphism.
\end{lemma}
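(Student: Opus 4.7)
The plan is to combine Lemma \ref{lem:nondegen} (which says the deformation classes $c_{i,j}$ of $\A_R[p]$ span the mod-$\pi$ cotangent space of $R$) with Lemmas \ref{lem:synII} and \ref{lem:synIII} from the previous subsection, applied to the map $f$.

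First, since $\L$ is an extension of $(\Z/p\Z)^g$ by $\mu_p^g$, it is described by a $g\times g$ matrix of classes $c'_{i,j} \in \Ext^1_Y(\Z/p\Z, \mu_p)$. The isomorphism $f^*\L \iso \A_R[p]|_X$ of extensions gives $f^*(c'_{i,j}) = c_{i,j}|_X$, where $(c_{i,j})$ is the extension matrix of the universal object $\A_R[p]$ over $R$. By definition each class $c_{i,j}$ lies in $\Ext^1_R(\Z/p\Z,\mu_p)$, so its pullback to $X$ lies in the syntomic subgroup $\Ext^{1,\syn}_X(\Z/p\Z,\mu_p)$. Hence each $f^*(c'_{i,j})$ is syntomic on $X$, and Lemma \ref{lem:synII} (together with Lemma \ref{lem:synI} to identify the syntomic subgroup with $\Ext^1_{\tilde Y}$) shows that each $c'_{i,j}$ is itself syntomic on $Y$.

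Next I would verify the nondegeneracy hypothesis of Lemma \ref{lem:synIII} for the set $L = \{c'_{i,j}\}$. By functoriality of $\theta$, the images $\theta_A(c_{i,j}|_X) \in \bar\gm_A/\bar\gm_A^2$ are exactly the images under the surjection $\bar\gm_R/\bar\gm_R^2 \twoheadrightarrow \bar\gm_A/\bar\gm_A^2$ (which is surjective because $A$ is a quotient of $R$) of the classes $\theta_R(c_{i,j})$. By Lemma \ref{lem:nondegen} the latter span $\bar\gm_R/\bar\gm_R^2$, so their images span $\bar\gm_A/\bar\gm_A^2$. Since $\theta_A(f^*(L)) = \{\theta_A(c_{i,j}|_X)\}$, the $k$-span of $\theta_A(f^*(L))$ equals $\bar\gm_A/\bar\gm_A^2$, as required.

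Applying Lemma \ref{lem:synIII} to $f$ and $L$ then yields that $f$ is an isomorphism. The only mildly subtle step is the translation between the two descriptions of the extensions (the reduction $c'_{i,j} \mapsto c_{i,j}|_X$), but this is forced by the hypothesis that the isomorphism $f^*\L \iso \A_R[p]|_X$ respects the extension structures; the bulk of the work was already done in establishing Lemmas \ref{lem:nondegen}, \ref{lem:synII}, and \ref{lem:synIII}, so no further obstacle arises.
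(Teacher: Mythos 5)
Your proposal is correct and follows essentially the same route as the paper: both first invoke Lemma \ref{lem:nondegen} and the surjection $\bar\gm_R/\bar\gm_R^2 \twoheadrightarrow \bar\gm_A/\bar\gm_A^2$ to show the restricted classes $\theta_A(c_{i,j})$ span; both then use Lemma \ref{lem:synII} to deduce that the extension classes of $\L$ are syntomic, Lemma \ref{lem:synI} together with the extension isomorphism to match them with the $c_{i,j}$, and finally Lemma \ref{lem:synIII} to conclude $f$ is an isomorphism. Your version makes explicit the observation (left tacit in the paper) that $f^*(c'_{i,j})=c_{i,j}|_X$ is syntomic because $c_{i,j}$ already lives in $\Ext^1_R$ and hence in $\Ext^1_{\tilde X}$.
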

\begin{proof}  Using the notation of \ref{para:univdef}, we have $\theta_R(\{c_{i,j}\}_{i,j})$ spans $\bar \gm_R/\bar \gm_R^2$ by Lemma \ref{lem:nondegen}. In particular, if we again denote by $c_{i,j}$ the restrictions of these classes to $A,$ then $\theta_A(\{c_{i,j}\}_{i,j})$ spans $\bar \gm_A/\bar \gm_A^2.$

Now by Lemma \ref{lem:synII} the $g^2$ extension classes defining $\L$ are syntomic. So $\L$ arises
from an extension of $(\Z/p\Z)^g$ by $\mu_p^g$ as finite flat group schemes over $\tilde Y.$
If we denote by $(d_{i,j})$ the corresponding $g\times g$ matrix of elements of $\Ext_{\tilde Y}^1(\Z/p\Z, \mu_p),$
then Lemma \ref{lem:synI}, together with the fact that $f^*\L \iso \A_R[p]|_X$ implies that $f^*(d_{i,j}) = c_{i,j}.$
It follows that the elements $\theta_A(f^*(\{d_{i,j}\}))_{i,j}$ span $\bar \gm_A/\bar \gm_A^2,$ which implies that $f$ is an isomorphism by Lemma \ref{lem:synIII}.
\end{proof}

%

\begin{para} Fix an integer $g \geq 1$, a prime $p\geq 2$, and a positive integer $N \geq 2$ coprime to $p.$
Consider the ring $\Z[\zeta_N][1/N],$ where $\zeta_N$ is a primitive $N^{\text{\rm th}}$ root of $1.$
Using the isomorphism $\Z/N\Z \underset {1 \mapsto \zeta_N}{\iso}\mu_N,$ for any
$\Z[\zeta_N][1/N]$-scheme $T,$ and any principally polarized abelian scheme $A$ over $T,$ the $N$-torsion scheme $A[N]$ is equipped with the (alternating) Weil pairing
$$ A[N] \times A[N] \rightarrow \Z/N\Z. $$
We denote by $\A_{g,N}$
the $\Z[\zeta_N][1/N]$-scheme which is the coarse moduli space of principally polarized abelian schemes $A$ of dimension $g$
equipped with a symplectic basis of $A[N].$
When $N \geq 3,$ this is a fine moduli space which is smooth over $\Z[\zeta_N][1/N].$ For a $\Z[\zeta_N][1/N]$-algebra $B,$ we denote by
$\A_{g,N/B}$ the base change of $\A_{g,N}$ to $B.$ If no confusion is likely to result we sometimes denote this base change simply by $\A_{g,N}.$

Suppose that $N \geq 3,$ and let $\A \rightarrow \A_{g,N}$ be the universal abelian scheme. The $p$-torsion subgroup $\A[p] \subset \A$ is a finite flat group scheme over $\A_{g,N}$ which is \'etale over $\Z[\zeta_N][1/Np].$ Let $x \in \A_{g,N}$ be a point with residue field $\kappa(x)$ of characteristic $p,$ and $\A_x$ the corresponding abelian variety over $\kappa(x).$
The set of points $x$ such that $\A_x$ is ordinary  is an open subscheme $\A_{g,N}^\ord \subset \A_{g,N}\otimes \F_p$. 
For any $N,$ we denote by $\A_{g,N}^\ord \subset \A_{g,N}\otimes \F_p$ the image of $\A_{g,NN'}$ for any 
$N' \geq 3$ coprime to $N$ and $p.$ 

We now denote by $k$ a perfect field of characteristic $p,$ and $K/W[1/p]$ a finite extension with ring of integers $\O_K$ and
uniformizer $\pi.$ We assume that $K$ is equipped with a choice of primitive $N^{\text{\rm th}}$ root of $1,$ $\zeta_N \in K.$
We remind the reader regarding the convention for the definition of $\ed$ and $\ed(\,\cdot\, ; p)$ introduced in
\ref{defn:ed}.
\end{para}

 \begin{thm}\label{thm:essdimAg}
 Let $g\geq 1$ and let $p$ be any prime.  Let $N \geq 3$ and coprime to $p,$ and let $\ZZ \subset \A_{g,N/\O_K}$ be a locally closed subscheme which is equidimensional and smooth over $\O_K,$ and whose special fiber, $\ZZ_k$, meets the ordinary locus 
 $\A_{g,N}^{\ord} \subset \A_{g,N/k}.$ 
Then $$ \ed(\A[p]|_{\ZZ_K}/\ZZ_K; p) = \dim \ZZ_K. $$
\end{thm}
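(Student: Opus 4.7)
The upper bound $\ed(\A[p]|_{\ZZ_K}/\ZZ_K; p) \le \dim \ZZ_K$ is immediate (take $W = \ZZ_K$), so I focus on the lower bound. The plan is to argue by contradiction: assuming a compression strictly reduces the dimension, I will pass to a formal neighborhood of an ordinary $\O_K$-point of $\ZZ$, arrange the extension structure demanded by Lemma \ref{lem:essdimI}, and then read off a contradiction from that lemma.

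Suppose $\ed(\A[p]|_{\ZZ_K}/\ZZ_K; p) < d := \dim \ZZ_K$. By definition there exists a generically finite, prime-to-$p$ map $E \to \ZZ_K$ and a rational compression $f: E \dashrightarrow W$ with $\dim W < d$ such that $\A[p]|_E$ is the pullback of a finite \'etale cover of $W$. By Lemma \ref{lem:coverlemII}, after replacing $W$ by a finite \'etale cover of the same dimension, I may assume the pullback is realized at the level of $\F_p$-local systems: there is an $\F_p$-local system $\FF$ on $W$ with $f^*\FF \iso \A[p]|_E$. After extending $\O_K$ if necessary, I choose $z \in \ZZ(\O_K)$ whose reduction $\bar z \in \ZZ_k$ lies in the ordinary locus; such $z$ exists since $\ZZ$ is smooth over $\O_K$ and $\ZZ_k$ meets $\A_{g,N}^{\ord}$. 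By smoothness, $A := \hat\O_{\ZZ,z} \iso V\lps x_1,\dots,x_d\rps$ with $V = \O_K$. By Serre--Tate theory the restriction of $\A[p]$ to $\tilde X = \Spec A$ is an extension of $(\Z/p\Z)^g$ by $\mu_p^g$ as finite flat group schemes, with extension classes $\{c_{i,j}\}$; and since $A$ is a formally smooth quotient of the universal deformation ring $R$ of $\A_{\bar z}$, Lemma \ref{lem:nondegen} implies $\{\theta_A(c_{i,j})\}$ spans $\bar\gm_A/\bar\gm_A^2$.

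Now pull $E \to \ZZ_K$ back along $X := \Spec A[1/p] \to \ZZ_K$ and pick a connected component $X'$ of $E \times_{\ZZ_K} X$ whose degree over $X$ is prime to $p$ (such a component exists since the total degree is prime to $p$). By a standard Abhyankar/tame-ramification argument, $X' \iso \Spec A'[1/p]$ with $A' \iso V'\lps x_1,\dots,x_d\rps$ for a tamely ramified DVR extension $V'/V$, and the nondegeneracy of $\{\theta_{A'}(c_{i,j}|_{X'})\}$ in $\bar\gm_{A'}/\bar\gm_{A'}^2$ persists. Composing with $X' \to E \dashrightarrow W$ and applying Lemma \ref{lem:coverlemIII} to the extension $\A[p]|_{X'}$, I obtain---after a further finite \'etale base change of $W$---an $\F_p$-local system on $W$ that is itself an extension of $(\Z/p\Z)^g$ by $\mu_p^g$ and pulls back to $\A[p]|_{X'}$ as an extension. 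Spreading $W$ out to a smooth integral model over (an extension of) $V'$ and completing at the image of the closed point of $X'$ then produces a local flat map $\tilde X' \to \tilde Y = \Spec B$ of complete local $V'$-algebras with $B \iso V'\lps y_1,\dots,y_s\rps$, $s < d$, together with an extension $\L$ on $Y = \Spec B[1/p]$ pulling back to $\A[p]|_{X'}$. Lemma \ref{lem:essdimI} then forces $\tilde X' \to \tilde Y$ to be an isomorphism, contradicting $\dim \tilde X' = d+1 > s+1 = \dim \tilde Y$.

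The main obstacle is the geometric bookkeeping required to actually feed data into Lemma \ref{lem:essdimI}: one must produce a genuine local flat map of formal power series rings over a common DVR, propagate the extension of local systems from $X'$ upward to a cover of $W$ (via Lemma \ref{lem:coverlemIII}), handle the various finite \'etale and prime-to-$p$ base changes correctly, and verify the key point that the prime-to-$p$ component $X'$ really has the form $\Spec V'\lps x_1,\dots,x_d\rps$. Once these are in place, the arithmetic heart of the argument---Serre--Tate theory and the nondegeneracy of extension classes encoded in Lemma \ref{lem:nondegen}---has already been packaged into Lemma \ref{lem:essdimI}, which then delivers the contradiction.
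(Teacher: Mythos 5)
Your strategy is aligned with the paper's: isolate the arithmetic content in Lemma~\ref{lem:essdimI}, pass to a formal neighborhood where Serre--Tate gives the needed extension structure, and use Lemmas~\ref{lem:coverlemII}, \ref{lem:coverlemIII} to transport that structure along the compression. But the geometric reductions that feed Lemma~\ref{lem:essdimI} its hypotheses contain real gaps, and the order of operations you choose makes some of them unfixable as written.

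\textbf{Pre-choosing the ordinary point is backwards.} You fix an $\O_K$-point $z\in\ZZ(\O_K)$ with ordinary reduction \emph{before} doing any shrinking, and then pull the compression back along $\Spec A[1/p]\to\ZZ_K$ where $A=\hat\O_{\ZZ,z}$. But the map $E\to\ZZ_K$ is only dominant and generically finite, and the compression $E\dashrightarrow W$ is only a rational map; both are defined (and finite, \'etale, etc.) only over a dense open of $\ZZ_K$. The image of $\Spec A[1/p]$ in $\ZZ_K$ is the set of points whose closure in $\ZZ$ contains $\bar z$; there is no reason this is contained in the locus where $E$ is finite or where $E\dashrightarrow W$ is defined. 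Consequently $E\times_{\ZZ_K}\Spec A[1/p]$ need not be a finite cover of the formal disk, "its degree prime to $p$" is not defined, and the composite $X'\to E\dashrightarrow W$ may not exist. The paper avoids this by arranging all the global geometry first---normalizing $\ZZ$ in $U_K$, applying Abhyankar at the generic point of $\ZZ_k$, shrinking to an affine open $U\subset\tilde\ZZ$ that is \'etale over $\ZZ$ with $U_k\to\ZZ_k$ dominant, Raynaud--Gruson flattening---and only \emph{then} choosing a $k$-point $x$ of the resulting $p$-adic completion. Your step 4 ("$X'\cong\Spec V'\lps x_1,\dots,x_d\rps[1/p]$ by Abhyankar") is really a rephrasing of the conclusion you want, not an argument: Abhyankar's lemma controls ramification at a fixed divisor (the special fiber), but gives no global finiteness or \'etale-ness of $E$ over a formal neighborhood without the normalization and shrinking steps.

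\textbf{The target ring $B$ is not obtained by constructing a smooth model of $W$.} You write ``Spreading $W$ out to a smooth integral model over (an extension of) $V'$ and completing at the image of the closed point of $X'$ then produces\ldots\ $B\cong V'\lps y_1,\dots,y_s\rps$.'' There is no reason a smooth integral model of $W$ exists, and Lemma~\ref{lem:essdimI} genuinely needs $B$ to be a power series ring. The paper spreads $Y_K$ out \emph{arbitrarily}, uses Raynaud--Gruson to make $U\to Y$ flat (after a blow-up along an ideal containing $p$ and shrinking again), and then deduces formal smoothness of $\O_{\widehat Y'',y}$ a posteriori from the fact that it admits a \emph{flat} local map to the formally smooth ring $\O_{\widehat U'',x}$ (this is the role of the citation to EGA IV 17.5.3 in the paper's proof). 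The flattening theorem and the descent-of-smoothness argument are not optional; without them the hypothesis ``$B=V\lps y_1,\dots,y_s\rps$'' of Lemma~\ref{lem:essdimI} is unsupported. You also omit the intermediate normalization $\widehat Y'$ and the second flattening (via inverting a function $f$) that the paper uses to secure flatness of the completed local map at the chosen point.

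\textbf{Minor but real: the normalization in $U_K$, and the residue-field condition.} To conclude $A'\cong V'\lps x_1,\dots,x_d\rps$ you are implicitly appealing to the fact that the chosen component of the normalization is \'etale over $\ZZ$ near the generic point of the special fiber; this requires not only $e(\q_i/\p)=1$ but also separability of the residue extension, which the paper extracts from the prime-to-$p$ hypothesis. This step is easy to get right but is elided in your write-up.

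To summarize: the arithmetic core (Lemma~\ref{lem:essdimI} plus Serre--Tate nondegeneracy) is correctly identified as the engine, but the geometric setup you sketch does not produce its hypotheses---finiteness and flatness over the formal disk, domain of definition of the rational map, and the power series ring structure on $B$. The paper's proof supplies these via normalization, Abhyankar at the special fiber, careful shrinking \emph{before} choosing the $k$-point, Raynaud--Gruson flattening, and descent of formal smoothness along a flat local map.
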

\begin{proof}  It suffices to prove the theorem when $k$ is algebraically closed which we assume from now on.
Moreover, since $K$ is an arbitrary finite extension of $W[1/p],$ it is enough to show that
$ \ed_K(\A[p]|_{\ZZ_K}/\ZZ_K;p) = \dim \ZZ_K. $ We may replace $\ZZ$ by a component whose special fibre meets the ordinary locus, and assume that $\ZZ_K$ and $\ZZ_k$ are geometrically connected.

Suppose that $ \ed(\A[p]|_{\ZZ_K}/\ZZ_K; p) < \dim \ZZ_K.$ Then there exists a dominant, generically finite map
$U_K \rightarrow \ZZ_K$ of degree prime to $p$ at the generic points of $U_K,$
and a map $h: U_K \rightarrow Y_K$ to a finite type $K$-scheme $Y_K$ with
$\dim Y_K < \dim \ZZ_K,$ such that $\A[p]|_{U_K}$ arises as the pullback of a finite \'etale covering of $Y_K.$
We may assume that $Y_K$ is the scheme-theoretic image of $U_K$ under $h.$ Next, after replacing both $U_K$ and $Y_K$ by dense affine opens, we may assume that both these schemes are affine corresponding to $K$-algebras $B_K$ and $C_K$ respectively,
and that $U_K \rightarrow Y_K$ is flat.

Let $\tilde \ZZ$ be the normalization of $\ZZ$ in $U_K.$ Let $\frak p$ be the generic point of $\ZZ_k,$ and
$\frak q_1, \dots, \frak q_m$ the primes of $\tilde \ZZ$ over $\frak p.$ Since the degree of $\tilde \ZZ \rightarrow \ZZ$ over $\frak p$
is prime to $p,$ for some $i$ the ramification degree $e(\frak q_i/\frak p)$ and the degree of the residue field extension 
$\kappa(\frak q_i)/\kappa(\frak p)$ are prime to $p.$ In particular, the residue field extension is seperable. 
By Abhyankar's Lemma, it follows that, after replacing $K$ by a finite extension, we may assume that $e(\frak q_i/\frak p) = 1$ for some $i,$ and that $\tilde \ZZ \rightarrow \ZZ$ is \'etale at $\frak q_i.$
Shrinking $U_K$ further if necessary, we may assume that there is an affine open $\Spec B = U \subset \tilde \ZZ$ such that
$U_k \rightarrow \ZZ_k$ has dense image,  $U\otimes K = U_K,$ and $U \rightarrow \ZZ$ is \'etale.
In particular, $U$ is smooth over $\O_K.$

Now choose a finitely generated $\O_K$-subalgebra $C \subset C_K \cap B$ such that $C\otimes K = C_K.$ This is possible as $C_K$ is finitely generated over $K.$ Then $h$ extends to a map $h: U \rightarrow Y = \Spec C.$ Let $J \supset (p)$ be an ideal of $C,$
and $Y_J \rightarrow Y$ the blow up of $J.$ Denote by $U_J$ the proper transform of $U$ by this blow up.
That is, $U_J$ is the closure of $U_K$ in $U\times Y_J.$
By the Raynaud-Gruson flattening theorem, \cite[Thm~5.2.2]{RG}, we can choose $J$ so that $U_J \rightarrow Y_J$ is flat. Since $U$ is normal, the map $U_J \rightarrow U$ is an isomorphism over the generic points of $U\otimes k.$ Hence, after replacing $Y$ by an affine open in $Y_J,$ and shrinking $U,$ we may assume that
$U \rightarrow Y$ is flat.

Shrinking $U$ further, we may assume that the special fiber $U_k$ maps to the ordinary locus of $\A_{g,N}.$
Now let $\widehat B$ and $\widehat C,$ denote the $p$-adic completions of $B$ and $C$ respectively,
and set $\widehat U = \Spec \widehat B$ and $\widehat Y = \Spec \widehat C.$
\footnote{Although it would in some sense be more natural to work with formal schemes here, we stay in the world of affine schemes, so as to be able to apply the results proved in \S 1, and to deal with generic fibers without resorting to $p$-adic analytic spaces.}
Since $\A[p]|_{\widehat U}$ is ordinary, there is a finite \'etale covering $\widehat U' = \Spec \widehat B' \rightarrow \widehat U$ such that $\A[p]|_{\widehat U'}$
is an extension of $(\Z/p\Z)^g$ by $\mu_p^g.$
Hence by Lemmas \ref{lem:coverlemII} and \ref{lem:coverlemIII},
 $\widehat U'_K \rightarrow \widehat Y_K$ factors through a finite \'etale map
 $\widehat Y'_K \rightarrow \widehat Y_K$ such that $\A[p]|_{\widehat U'_K}$ is the pullback of
 an extension $\FF'$ of $(\Z/p\Z)^g$ by $\mu_p^g$ on $\widehat Y'_K.$ As $\widehat U'$ is normal,
 we may assume $\widehat Y'_K$ is normal.

Let $\widehat Y' = \Spec \widehat C'$ be the normalization of $\widehat Y$ in $\widehat Y'_K.$ As $\widehat U'$ is normal, we have
 $$ \widehat U' \rightarrow \widehat Y' \rightarrow \widehat Y.$$
 As $\widehat Y'$ is normal, $\widehat U' \rightarrow \widehat Y'$ is flat over the generic points of $\widehat Y'_k.$
 Hence, there exists $f_0 \in \widehat C'/\pi \widehat C'$ which is nowhere nilpotent on $\widehat Y'_k,$ and such that
 $\widehat U'_k \rightarrow \widehat Y'_k$ is flat over the complement of the support of the ideal $(f_0).$
 Now let $f \in \widehat C'$ be a lift of $f_0,$ and let $\widehat C'' = \widehat {C'[1/f]}$ and $\widehat B'' = \widehat {B'[1/f]},$
 the $p$-adic completions
 \footnote{$ \widehat C''$ corresponds to a formal affine open in the formal scheme $\Spf \widehat C'.$}
 of $C'[1/f]$ and $B'[1/f].$
 Let $\widehat U'' = \Spec B''$ and $\widehat Y'' = \Spec \widehat C''.$
 Then $\widehat U''$ is flat over $\widehat Y''$ by \cite[IV, 11.3.10.1]{EGA}.
 Moreover, since $\widehat U \rightarrow \widehat Y$ is flat, the generic points of $\widehat U'_k$ map to generic points of
 $\widehat Y'_k.$ So the image of $\widehat U''_k$ is dense in $\widehat U'_k,$ and in particular $\widehat U''(k)$ is non-empty.

Now choose a point $x \in \widehat U''(k),$ and denote by $y \in \widehat Y''(k)$ its image.
We write $\O_{\widehat U'',x}$ and $\O_{\widehat Y'',y}$ for the complete local rings at $x$ and $y.$
Since the maps
$$ \widehat U'' \rightarrow \widehat U' \rightarrow \widehat U \rightarrow \ZZ $$
are formally \'etale,  $\O_{\widehat U'',x}$ is naturally isomorphic to the complete local ring at the image of $x$ in $\ZZ.$
Let $R$ be the universal deformation $\O_K$-algebra of the abelian scheme $\A_x.$ Then $\O_{\widehat U'',x}$ is naturally a quotient of $R.$ The map $\O_{\widehat Y'',y} \rightarrow \O_{\widehat U'',x}$ satisfies the conditions of Lemma \ref{lem:essdimI}, (cf.~\cite[IV, 17.5.3]{EGA}) and it follows that this map is an isomorphism. In particular, this implies that
$$ \dim Y_K = \dim \O_{\widehat Y'',y} -1 = \dim \O_{\widehat U'',x} -1 = \dim \ZZ_K$$
which contradicts our initial assumption.
\end{proof}

\begin{cor}\label{cor:essdimAg} Let $g\geq 1,$ be an integer, $p$ any prime, and $N \geq 1$ an integer coprime to $p.$
Let $\ZZ \subset \A_{g,N/\O_K}$ be a locally closed subscheme which is equidimensional and smooth over $\O_K,$ and whose special fiber, $\ZZ_k$, meets the ordinary locus  $\A_{g,N}^{\ord} \subset \A_{g,N/k}.$

If $N=1,2$ we also assume the following condition: For any generic point $\eta \in \ZZ_k,$ and $\bar\eta$ the spectrum of an algebraic closure of $\kappa(\eta),$ the abelian variety  $\A_{\bar\eta}$ over $\bar\eta,$ has automorphism group equal to $\{\pm 1\}.$
Then
$$ \ed(\A_{g,pN}|_{\ZZ_K}/\ZZ_K; p) = \dim \ZZ_K .$$
\end{cor}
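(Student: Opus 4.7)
The upper bound $\ed(\A_{g,pN}|_{\ZZ_K}/\ZZ_K;p)\le\dim\ZZ_K$ is immediate, so only the lower bound requires work. I would split into two cases.

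\textbf{Case $N\ge 3$.} Apply Theorem~\ref{thm:essdimAg} to obtain $\ed(\A[p]|_{\ZZ_K}/\ZZ_K;p)=\dim\ZZ_K$, and then relate this to the cover $\A_{g,pN}|_{\ZZ_K}$. The key observation is that, since $\gcd(N,p)=1$, the principal congruence subgroup $\Gamma(N)$ surjects onto $\Sp_{2g}(\F_p)$, so the monodromy representation $\rho\colon\pi_1(\A_{g,N})\to\Sp_{2g}(\F_p)$ attached to the $\F_p$-local system $\A[p]$ is surjective, and $\A_{g,pN}\to\A_{g,N}$ is exactly the cover associated to $\ker\rho$ in the sense of Lemma~\ref{lem:equiedlem}. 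After restricting to a connected component of $\ZZ_K$, the pullback $\A_{g,pN}|_{\ZZ_K}$ decomposes as a disjoint union of isomorphic copies of the single connected Galois cover $Y/\ZZ_K$ with group $G_\ZZ:=\rho(\pi_1(\ZZ_K))$ attached to $\ker(\rho|_{\pi_1(\ZZ_K)})$. Lemmas~\ref{lem:equiedlem}, \ref{lem:indepGalois}, and \ref{lem:indepGaloisII} then yield $\ed(\A_{g,pN}|_{\ZZ_K}/\ZZ_K;p)=\ed(\A[p]|_{\ZZ_K}/\ZZ_K;p)=\dim\ZZ_K$.

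\textbf{Case $N\in\{1,2\}$.} Reduce to the previous case by a prime-to-$p$ base change. Pick $N'\ge 3$ coprime to $pN$ and set $\tilde\ZZ:=\ZZ\times_{\A_{g,N}}\A_{g,NN'/\O_K}$. The automorphism hypothesis $\Aut(\A_{\bar\eta})=\{\pm 1\}$ at each generic point $\eta\in\ZZ_k$, together with the fact that $-1\equiv 1\pmod N$ while $-1\not\equiv 1\pmod{N'}$, ensures that the generic $\{\pm 1\}$-ambiguity present on the coarse moduli $\A_{g,N}$ is resolved on the fine moduli $\A_{g,NN'}$: the map $\A_{g,NN'}\to\A_{g,N}$ is étale of degree $|\Sp_{2g}(\Z/N')|/2$ in a Zariski neighborhood of each generic point of $\ZZ_k$. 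After shrinking, $\tilde\ZZ$ is smooth and equidimensional over $\O_K$ with special fiber meeting $\A_{g,NN'}^{\ord}$, so Case $N\ge 3$ applies and yields $\ed(\A_{g,pNN'}|_{\tilde\ZZ_K}/\tilde\ZZ_K;p)=\dim\tilde\ZZ_K=\dim\ZZ_K$. By CRT, the pullbacks of $\A_{g,pN}$ and $\A_{g,pNN'}$ to $\tilde\ZZ$ both coincide with $\tilde\ZZ\times_{\A_g}\A_{g,p}$, hence agree as covers of $\tilde\ZZ$; since $\ed(\,\cdot\,;p)$ is invariant under prime-to-$p$ base change (a formal consequence of its definition combined with monotonicity of $\ed$ under pullback), we conclude $\ed(\A_{g,pN}|_{\ZZ_K}/\ZZ_K;p)=\ed(\A_{g,pN}|_{\tilde\ZZ_K}/\tilde\ZZ_K;p)=\dim\ZZ_K$.

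The main obstacle is verifying that $\tilde\ZZ$ inherits the full package of hypotheses of Theorem~\ref{thm:essdimAg}, which is precisely where the automorphism hypothesis is used essentially. A technical subtlety arises when $p=2$: the degree $|\Sp_{2g}(\Z/N')|/2$ is always even, so to keep the base change $\tilde\ZZ\to\ZZ$ of prime-to-$p$ degree one must replace $\A_{g,NN'}\to\A_{g,N}$ by a suitable sub-cover (for instance, quotienting by a Sylow-$2$ subgroup of $\Sp_{2g}(\Z/N')/\{\pm 1\}$), and then check that the resulting sub-cover still resolves the $\{\pm 1\}$-ambiguity and meets the ordinary locus generically.
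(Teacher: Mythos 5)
Your treatment of the case $N\ge 3$ matches the paper's (Theorem~\ref{thm:essdimAg} plus Lemma~\ref{lem:equiedlem}). The case $N\in\{1,2\}$, however, has a genuine gap, and there is a secondary misunderstanding worth flagging.

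The gap is the CRT step. You assert that the pullbacks of $\A_{g,pN}$ and $\A_{g,pNN'}$ to $\tilde\ZZ$ both coincide with $\tilde\ZZ\times_{\A_g}\A_{g,p}$. This is false when $p>2$. Since $\tilde\ZZ$ carries a level-$NN'$ structure with $NN'\ge 3$, the automorphism $-1$ has been rigidified away on $\tilde\ZZ$, so $\A_{g,pNN'}|_{\tilde\ZZ}$ parametrizes honest symplectic bases of $p$-torsion and is a $\Sp_{2g}(\F_p)$-cover. By contrast, $\A_{g,pN}|_{\tilde\ZZ}$, which is the same as $\bigl(\A_{g,pN}\times_{\A_{g,N}}\A_{g,NN'}\bigr)|_{\tilde\ZZ}$, inherits the coarse quotient by $\{\pm 1\}$ already present in $\A_{g,pN}\to\A_{g,N}$ and is only a $\Sp_{2g}(\F_p)/\{\pm 1\}$-cover. (Likewise $\tilde\ZZ\times_{\A_g}\A_{g,p}$ is a $\Sp_{2g}(\F_p)/\{\pm 1\}$-cover, because the fiber product is taken over the coarse space $\A_g$.) Thus for $p>2$ the two covers of $\tilde\ZZ$ have different degree and cannot be identified; they differ by a central quotient by $\{\pm 1\}$. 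The paper bridges exactly this gap with Lemma~\ref{lem:brauer}: for a Galois cover $Y\to X$ with group $G$ and a central cyclic $H\subset G$ with $\operatorname{char}(K)\nmid|H|$, one has $\ed(Y\to X;p)\ge\ed(Y/H\to X;p)$ with equality when $p\nmid|H|$. Taking $H=\{\pm 1\}$ and $p>2$ gives $\ed\bigl(\A_{g,pNN'}|_{\tilde\ZZ_K}/\tilde\ZZ_K;p\bigr)=\ed\bigl(\A_{g,pN}|_{\tilde\ZZ_K}/\tilde\ZZ_K;p\bigr)$, which then feeds into Theorem~\ref{thm:essdimAg}. Without this Brauer-class argument, your chain of equalities does not close.

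Secondly, your worry about making $\tilde\ZZ\to\ZZ$ prime-to-$p$ is misdirected, and the proposed Sylow-$2$ fix is unnecessary. The degree $|\Sp_{2g}(\Z/N'\Z)|/2$ is typically divisible by $p$ for \emph{all} primes $p$, not just $p=2$, so this is not a $p=2$-specific obstacle, and passing to the quotient by a Sylow $p$-subgroup would destroy both the Galois structure and the rigidification you need. What saves the argument is that you only need the lower bound; the one-sided inequality $\ed(Y/X;p)\ge\ed\bigl(Y_{X'}/X';p\bigr)$ holds for \emph{any} dominant, generically finite base change $X'\to X$ (pulling back an optimal prime-to-$p$ auxiliary cover $E\to X$ along $X'\to X$ gives a prime-to-$p$ cover of $X'$, and base change along the induced map cannot increase $\ed$). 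This is how the paper begins its chain, with the inequality $\ed(\A_{g,pN}|_{\ZZ_K}/\ZZ_K;p)\ge\ed\bigl(\A_{g,pN}|_{\ZZ_{N',K}}/\ZZ_{N',K};p\bigr)$; no prime-to-$p$ hypothesis on $\ZZ_{N'}\to\ZZ$ is required.
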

\begin{proof} If $N \geq 3,$ the corollary follows from Theorem \ref{thm:essdimAg} and Lemma \ref{lem:equiedlem}.

Suppose $N=1$ or $2.$ Let $N' \geq 3$ be an integer coprime to $pN.$ We may assume that $K$ is equipped with
a primitive $pN^{\prime\text{\rm th}}$ root of $1,$ $\zeta_{pN'}.$
The map of $K$-schemes $\A_{g,pN} \rightarrow \A_{g,N},$ is a covering with group $\Sp_{2g}(\Bbb F_p)/\{\pm 1\}.$
Consider the maps
$$\A_{g,pNN'} \rightarrow  \A_{g,pN}\times_{\A_{g,N}}\A_{g,NN'} = : \A'_{g,pNN'} \rightarrow \A_{g,NN'}.$$
Then $\A'_{g,pNN'} \rightarrow \A_{g,NN'}$ again corresponds to a $\Sp_{2g}(\Bbb F_p)/\{\pm 1\}$
covering, and $\A_{g,pNN'} \rightarrow \A_{g,NN'}$ corresponds to a $\Sp_{2g}(\Bbb F_p)$ covering.
When $p=2$ these two coverings coincide.

Let $\ZZ_{N'} = \ZZ \times_{\A_{g,N}}\A_{g,NN'}.$ Our assumption on the automorphisms of $\A_{\bar\eta}$
implies that at the generic points of $\ZZ_k,$ the map $\ZZ_{N'} \rightarrow \ZZ$ is \'etale.
Thus, after replacing $\ZZ$ by a fibrewise dense open, we may assume that
$\ZZ_{N'}$ is smooth over $\O_K.$ The observations of the previous paragraph, Lemma \ref{lem:brauer} when $p >2,$
Lemma \ref{lem:equiedlem} and Theorem \ref{thm:essdimAg} imply that
we have
 \begin{multline} \ed(\A_{g,pN}|_{\ZZ_K}/\ZZ_K; p) \geq \ed(\A'_{g,pNN'}|_{\ZZ_{N',K}}/\ZZ_{N',K}; p) \\
=  \ed(\A_{g,pNN'}|_{\ZZ_{N',K}}/\ZZ_{N',K}; p) = \ed(\A[p]|_{\ZZ_{N',K}}/\ZZ_{N',K}; p) = \dim \ZZ_K.
\end{multline}

\end{proof}

\begin{cor}\label{cor:essdimAgII} Let $g, n\geq 2$ and $N$ a positive integer coprime to $n.$
Consider the finite \'etale map of $\Q(\zeta_{nN})$-schemes $\A_{g,nN} \rightarrow \A_{g,N}.$
Then for any $p\mid n$ we have
$$ \ed(\A_{g,nN}/\A_{g,N}; p) =  \dim \A_g = \binom{g+1}{2}.$$
\end{cor}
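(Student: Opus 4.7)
The plan is to reduce to the case $n=p$, which is already handled by Corollary~\ref{cor:essdimAg}. The upper bound $\ed(\A_{g,nN}/\A_{g,N};p)\le \binom{g+1}{2}=\dim \A_{g,N}$ is trivial, since the identity on $\A_{g,N}$ realizes $\A_{g,nN}$ as a pullback of itself. For the lower bound, the first step is to exploit the hypothesis $p\mid n$: this yields a level-reduction surjection $\Sp_{2g}(\Z/nN\Z)\twoheadrightarrow \Sp_{2g}(\Z/pN\Z)$ and hence a canonical factorization of finite \'etale covers
\[ \A_{g,nN}\longrightarrow \A_{g,pN}\longrightarrow \A_{g,N}. \]

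Essential $p$-dimension is monotonic along such factorizations, so I would next deduce
\[ \ed(\A_{g,nN}/\A_{g,N};p)\ge \ed(\A_{g,pN}/\A_{g,N};p). \]
The verification is routine: given any dominant, generically finite $E\to \A_{g,N}$ of degree prime to $p$ and any rational compression $U\dashrightarrow W$ through which $\A_{g,nN}|_E\to E$ is the pullback of a finite cover of $W$, the monodromy representation of $\A_{g,pN}|_E\to E$ is obtained from that of $\A_{g,nN}|_E\to E$ by postcomposition with the quotient map to $\Sp_{2g}(\Z/pN\Z)/\{\pm 1\}$, so the same compression works for it.

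It then remains to apply Corollary~\ref{cor:essdimAg} to the right-hand side. I would choose a finite extension $K$ of $W(\bar\F_p)[1/p]$ containing $\zeta_{nN}$, with ring of integers $\O_K$ and residue field $k=\bar\F_p$, and take $\ZZ=\A_{g,N/\O_K}$ if $N\ge 3$, or its smooth locus if $N\in\{1,2\}$. The hypotheses of Corollary~\ref{cor:essdimAg} are then immediate: $\ZZ$ is equidimensional and smooth over $\O_K$; the special fiber $\ZZ_k$ meets the classically nonempty ordinary locus $\A_{g,N}^{\ord}\subset \A_{g,N/k}$; and in the cases $N\in\{1,2\}$ the assumption $g\ge 2$ ensures that the generic principally polarized abelian variety has automorphism group $\{\pm 1\}$, as required. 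Corollary~\ref{cor:essdimAg} thus yields
\[ \ed(\A_{g,pN}|_{\ZZ_K}/\ZZ_K;p)=\dim \ZZ_K=\binom{g+1}{2}, \]
which, by Lemma~\ref{lem:redntoqbar} together with the fact that essential dimension depends only on a dense open, coincides with $\ed(\A_{g,pN}/\A_{g,N};p)$. Combining this with the upper bound and the monotonicity inequality closes the proof.

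I do not expect a serious obstacle. The only point requiring explicit care is the monotonicity of essential $p$-dimension under the quotient cover $\A_{g,nN}\to \A_{g,pN}$; this is not literally stated among the lemmas of Section~\ref{sec:prelim}, but follows immediately from the fundamental-group description of finite \'etale covers and is the same kind of manipulation used in the proof of Lemma~\ref{lem:brauer}.
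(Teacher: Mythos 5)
Your proposal is correct and follows essentially the same route as the paper: the paper's proof is the single sentence ``A fortiori it suffices to consider the case when $n=p$ is prime, which is a special case of Corollary~\ref{cor:essdimAg},'' and your argument unpacks this ``a fortiori'' by making explicit both the factorization $\A_{g,nN}\to\A_{g,pN}\to\A_{g,N}$ with the resulting monotonicity $\ed(\A_{g,nN}/\A_{g,N};p)\ge\ed(\A_{g,pN}/\A_{g,N};p)$ (which, as you correctly observe, is the first half of the argument in the proof of Lemma~\ref{lem:brauer} and needs no centrality hypothesis) and the trivial upper bound $\dim\A_g$. The application of Corollary~\ref{cor:essdimAg} with $\ZZ$ the whole integral model (or its smooth locus when $N\in\{1,2\}$) and the reduction of base field via Lemma~\ref{lem:redntoqbar} and the convention of~\ref{defn:ed} is exactly what the paper intends.
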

\begin{proof} A fortiori it suffices to consider the case when $n=p$ is prime,
which is a special case of \ref{cor:essdimAg}.
\end{proof}

\subsection{Moduli spaces of curves} Using the Torelli theorem one can use  Theorem \ref{thm:essdimAg}
 to deduce the essential $p$-dimension of certain coverings of families of curves.

\begin{para}
Let $g \geq 2,$ and let $\M_g$ denote the coarse moduli space of smooth, proper, genus $g$ curves.
For any integer $n,$ let $\M_g[n]$ denote the $\Z[\zeta_n][1/n]$-scheme which is the coarse moduli space of pairs $(C,{\mathcal B})$ consisting of a proper smooth curve $C$ of genus $g,$ together with a choice $\mathcal B$ of symplectic basis for $J(C)[n],$ where $J(C)$ denotes the Jacobian of $C.$ For $n \geq 3$ this is a fine moduli space which is smooth over $\Z[\zeta_n][1/n]$ \cite{DeligneMumford}.

\end{para}

\begin{thm}\label{thm:genusg} Let $g,n \geq 2$, and let $p$ be any prime dividing $n.$  Then
    \[
        \ed(\M_g[n]/\M_g; p)=\dim \M_g=3g-3.
    \]
\end{thm}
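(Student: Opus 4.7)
The upper bound $\ed(\M_g[n]/\M_g;p)\leq \dim\M_g = 3g-3$ is immediate from the definition; the content is the matching lower bound. My plan has three steps: (i) reduce to the case $n=p$; (ii) apply Theorem \ref{thm:essdimAg} to the Torelli image of $\M_g[N]$ inside $\A_{g,N}$ for an auxiliary level $N\geq 3$ coprime to $p$, extracting the bound $\ed(\M_g[pN]/\M_g[N];p)=3g-3$; and (iii) descend this level-$N$ bound back to $\M_g$.

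For (i), since $p\mid n$, reducing the $n$-torsion structure mod $p$ exhibits $\M_g[p]\to\M_g$ as a Galois quotient of $\M_g[n]\to\M_g$ by $H:=\ker(\Sp_{2g}(\Z/n\Z)\to \Sp_{2g}(\F_p))$. Given any prime-to-$p$ cover $E\to\M_g$ and a rational compression $f:E\to W$ of $\M_g[n]|_E\to E$, Lemma \ref{lem:coverlemI} lets one assume the compressing cover on $W$ is Galois; quotienting by $H$ then compresses $\M_g[p]|_E\to E$ to $W$ as well, yielding $\ed(\M_g[p]/\M_g;p)\leq \ed(\M_g[n]/\M_g;p)$.

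For (ii), fix $N\geq 3$ coprime to $p$ and let $\O_K$ be the completion of $\Z[\zeta_N][1/N]$ at a prime above $p$, with residue field $k=\bar\F_p$. For $N\geq 3$ the level-$N$ Torelli morphism $\M_g[N]_{\O_K}\hookrightarrow\A_{g,N/\O_K}$ is a locally closed immersion---the hyperelliptic involution acts as $-1$ on $N$-torsion and so does not preserve any level-$N$ structure---with image $\ZZ$ smooth over $\O_K$ of relative dimension $3g-3$. The classical existence of smooth genus-$g$ curves with ordinary Jacobian over $\bar\F_p$ (e.g.\ a suitable hyperelliptic model) ensures that $\ZZ_k$ meets $\A_{g,N}^{\ord}$, so Theorem \ref{thm:essdimAg} applies and yields $\ed(\A[p]|_{\ZZ_K}/\ZZ_K;p)=3g-3$. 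Since the monodromy of $J[p]$ on $\M_g[N]_K$ is the full $\Sp_{2g}(\F_p)$ (because $\ker(\Sp_{2g}(\Z)\to\Sp_{2g}(\Z/N\Z))$ surjects onto $\Sp_{2g}(\F_p)$ by strong approximation), the cover associated in Lemma \ref{lem:equiedlem} to $\ker\rho_{J[p]}$ is precisely $\M_g[pN]_K\to\M_g[N]_K$, and the lemma converts the bound to
\[
\ed(\M_g[pN]/\M_g[N];p)=3g-3.
\]

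For (iii), $\gcd(p,N)=1$ gives $\M_g[pN]\cong \M_g[p]\times_{\M_g}\M_g[N]$. Given a prime-to-$p$ cover $E\to\M_g$ together with a compression $f:E\to W$ of $\M_g[p]|_E\to E$, the base-change $E_N:=E\times_{\M_g}\M_g[N]\to\M_g[N]$ still has prime-to-$p$ generic degree, and the composite $E_N\to E\to W$ compresses $\M_g[pN]|_{E_N}\to E_N$ to $W$; hence $\ed(\M_g[pN]/\M_g[N];p)\leq \ed(\M_g[p]/\M_g;p)$, and combining with the earlier steps finishes the lower bound $\ed(\M_g[n]/\M_g;p)\geq 3g-3$. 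The main subtlety I expect is precisely this descent: one cannot pull back along $\M_g[N]\to\M_g$ itself, because $|\Sp_{2g}(\Z/N\Z)|$ is typically divisible by $p$ even with $\gcd(N,p)=1$ (through factors like $\ell\pm 1$ for $\ell\mid N$), so $\M_g[N]\to\M_g$ is not a valid cover in the definition of essential $p$-dimension; the workaround is to base-change the auxiliary cover $E$ rather than the moduli problem, after which preservation of prime-to-$p$ generic degree is automatic.
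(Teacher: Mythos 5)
Your step (ii) contains a genuine gap: the Torelli morphism $\M_g[N]\to\A_{g,N}$ is \emph{not} a locally closed immersion for $g\ge 3$. It is generically $2$-to-$1$ onto its image. The reason is precisely the opposite of what you suggest: for a \emph{non-hyperelliptic} curve $C$ (the generic case when $g\ge 3$), the points $(C,\mathcal B)$ and $(C,-\mathcal B)$ of $\M_g[N]$ are distinct (since $-1$ does not extend to an automorphism of $C$), yet they map to the same point of $\A_{g,N}$ because $-1\in\Aut(J(C),\lambda)$ carries $\mathcal B$ to $-\mathcal B$. The hyperelliptic involution acting as $-1$ on torsion is exactly why $(C,\mathcal B)\cong(C,-\mathcal B)$ in $\M_g[N]$ on the hyperelliptic locus, so that locus is where Torelli \emph{is} injective (but can fail to be an immersion). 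Consequently you cannot apply Theorem \ref{thm:essdimAg} with $\ZZ$ replaced by $\M_g[N]$; the correct subvariety is $\M_g[N]'=\M_g[N]/\Sigma$, the quotient by the involution $\Sigma:(C,\mathcal B)\mapsto(C,-\mathcal B)$, whose map to $\A_{g,N}$ is the one that Oort--Steenbrink show is injective and a generic immersion. The cover that Theorem \ref{thm:essdimAg} together with Lemma \ref{lem:equiedlem} then controls is $\M_g[pN]'=\A_{g,pN}|_{\M_g[N]'}$ over $\M_g[N]'$, not $\M_g[pN]$ over $\M_g[N]$.

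Even after this correction, your chain of reductions has a second problem at $p=2$. You would like to transport the bound from $\M_g[N]'$ back to $\M_g[N]$ along the degree-$2$ map $\M_g[N]\to\M_g[N]'$, which is prime to $p$ only when $p$ is odd. The paper avoids this by never going up to $\M_g[N]$ at all: it pulls $\M_g[p]$ back to $\M_g[N]'$ directly, setting $\M_g[pN]''=\M_g[p]\times_{\M_g}\M_g[N]'$, and then compares $\M_g[pN]''$ with $\M_g[pN]'$ as coverings of $\M_g[N]'$ --- for $p=2$ they coincide, and for $p\ge 3$ one passes between them via the intermediate $\M_g[pN]$ and Lemma \ref{lem:brauer} (a central $\{\pm1\}$ quotient of order prime to $p$). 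Your reductions (i) and (iii) --- reducing $n$ to $p$, and pulling back along an auxiliary level --- are sound and in the same spirit as the paper, but the missing ingredient is the passage to $\M_g[N]'$ and the Brauer-class comparison of Lemma \ref{lem:brauer}, which is needed to close the gap between the cover controlled by Theorem \ref{thm:essdimAg} and the cover $\M_g[p]|_{\M_g[N]'}$ you actually want.
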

\begin{proof}
Let $N \geq 3$ be an integer.
There is a natural map of $\Z[\zeta_N][1/N]$-schemes
$\varpi: \M_g[N] \rightarrow A_{g,N}$ taking a curve to its Jacobian. For $(C,\mathcal B)$ in $\M_g[N]$
the pairs $(J(C), \mathcal B)$ and $(J(C), -\mathcal B)$ are isomorphic via $-1$ on $J(C).$ Thus
$(C,\mathcal B) \mapsto (C,-\mathcal B)$ is an involution $\Sigma$ of $\M_g[N],$ which is non-trivial, unless $g=2.$
We denote by $\M_g[N]'$ the quotient of $\M_g[N]$ by this involution. When $g \geq 3,$ the fixed points of $\Sigma$
in any fibre of $\M_g[N]$ over $\Spec \Z[\zeta_N][1/N],$ are contained in a proper closed subset. Thus
$\M_g[N]'$ is generically smooth over every point of  $\Spec \Z[\zeta_N][1/N].$

By \cite{OortSteenbrink} 1.11, 2.7, 2.8, the map of $\Z[\zeta_N][1/N]$-schemes $\M_g[N] \rightarrow \A_{g,N}$ induces a map $\M_g[N]' \rightarrow \A_{g,N}$ which is injective, an immersion if $g=2$ and an immersion outside the hyperelliptic locus if $g \geq 3.$

It suffices to prove the theorem with $n$ replaced by the prime factor $p.$
Let $N \geq 3$ be coprime to $p,$ and let $\M_g[pN]'' = \M_{g}[p]\times_{\M_g}{\M_g[N]}'.$
It is enough to show that $\ed(\M_g[pN]''/\M_g[N]'; p) = 3g-3.$
If $p=2$ then $\M_g[pN]'' = \M_g[pN]'$ as coverings of $\M_g[N]',$ and if $p \geq 3$ we
have natural degree $2$ maps of  coverings of $\M_g[N]'$
$$ \M_g[pN]'' \leftarrow \M_g[pN] \rightarrow \M_g[pN]'.$$
Thus, using Lemma \ref{lem:brauer} when $p \geq 3,$ it suffices to show that
$$\ed(\M_g[pN]'/\M_g[N]';p) = 3g-3.$$

Since $\M_g[pN]' = \A_{g,pN}|_{\M_g[N]'},$ for example by comparing the degrees of these coverings, and
$\M_g[N]'$ meets the ordinary locus in $\A_{g,N}\otimes \F_p$ \cite[2.3]{FabervanderGeer},
the theorem now follows from Theorem \ref{thm:essdimAg}.
\end{proof}

\begin{para} We now prove the analogue of Theorem~\ref{thm:genusg} for the moduli space of hyperelliptic curves. Let $S$ be a $\Z[1/2]$-scheme. Recall that a {\em hyperelliptic curve} over $S$ is a smooth proper curve $C/S$ of genus $g \geq 1,$ equipped with an involution $\sigma$ such that $P = C/\langle \sigma \rangle$ has genus $0.$ Let $\H_g$ denote the coarse moduli space of genus $g$ hyperelliptic curves over $\Z.$ It is classical (and not hard to see) that over $\Z[1/2]$ one has
\[\H_g \cong \M_{0,2g+2}/S_{2g+2}\] where $\M_{0,2g+2}$ is the moduli space of genus $0$
curves with $2g+2$ ordered marked points, and $S_{2g+2}$ is the symmetric group on $2g+2$ letters.

For any integer $n,$ let $\H_g[n]$ denote the $\Z[\zeta_n][1/n]$-scheme which is the coarse moduli space of pairs $(C,{\mathcal B})$ consisting of a hyperelliptic curve $C$ together with a symplectic basis $\mathcal B$  for $J(C)[n]$. As above, for $n \geq 3$ this is a fine moduli space which is smooth over $\Z[\zeta_n][1/n].$
\end{para}

\begin{thm}\label{thm:edhyper} Let $g,n \geq 2$, and let $p$ be any odd prime dividing $n.$  Then
    \[
        \ed(\H_g[n]/\H_g; p)=\dim \H_g=2g-1.
    \]
\end{thm}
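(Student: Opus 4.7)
The plan is to imitate the proof of Theorem~\ref{thm:genusg}, with the simplification that on the hyperelliptic locus the involution $\Sigma\colon(C,\mathcal{B})\mapsto(C,-\mathcal{B})$ is already trivial: the hyperelliptic involution $\tau\in\Aut(C)$ induces $-1$ on $J(C)$, so $\tau\cdot\mathcal{B}=-\mathcal{B}$, and no intermediate quotient analogous to $\M_g[N]'$ is needed. As in the genus-$g$ case one reduces to $n=p$, since $\H_g[p]\to\H_g$ is a Galois subquotient of $\H_g[n]\to\H_g$ (a compression of the latter induces one of the former, giving $\ed(\H_g[n]/\H_g;p)\geq\ed(\H_g[p]/\H_g;p)$), and the upper bound $\ed\leq\dim\H_g=2g-1$ is trivial. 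It therefore suffices to show $\ed(\H_g[p]/\H_g;p)\geq 2g-1$.

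Fix $N\geq 3$ coprime to $p$, and let $\ZZ_N\subset\A_{g,N}$ be the image of the Torelli map $\H_g[N]\to\A_{g,N}$, $(C,\mathcal{B})\mapsto(J(C),\mathcal{B})$. Torelli together with a degree count (both are finite \'etale of degree $|\Sp_{2g}(\Z/N)|/2$ over the hyperelliptic locus in $\A_g$, the factor of $2$ coming from the identification $\tau\leftrightarrow -1$ of $\Aut(C)$ with $\Aut(J(C),\lambda)$ on generic hyperelliptic $C$) shows this map is an isomorphism onto $\ZZ_N$. Hence $\ZZ_N$ is a locally closed, equidimensional subscheme of $\A_{g,N}$ which is smooth over $\Z[\zeta_N][1/N]$ of relative dimension $2g-1$, and it meets the ordinary locus in every odd residue characteristic (for $g=2$ this is \cite{FabervanderGeer} since $\H_2=\M_2$; in general one uses the classical existence of ordinary hyperelliptic curves in odd characteristic). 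Applying Corollary~\ref{cor:essdimAg} to $\ZZ_N$ after base change to $\O_K=W(\bar\F_p)$ yields
\[
    \ed(\A_{g,pN}|_{\ZZ_{N,K}}/\ZZ_{N,K};p) = 2g-1,
\]
and under the identification $\ZZ_N\cong\H_g[N]$ the pullback $\A_{g,pN}|_{\ZZ_N}$ is canonically isomorphic to $\H_g[pN]$ as an $\Sp_{2g}(\F_p)$-Galois cover of $\H_g[N]$ (both parameterize level-$pN$ structures on hyperelliptic Jacobians modulo the $-1\leftrightarrow\tau$ identification, and they have the same degree). Thus $\ed(\H_g[pN]/\H_g[N];p)=2g-1$.

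The Galois group $\Sp_{2g}(\F_p)=\Gal(\H_g[pN]/\H_g[N])$ has central cyclic subgroup $\{\pm I\}$ of order $2$; tracking the action on level structures, $-I$ negates the mod-$p$ component of $\mathcal{B}_{pN}$, and the quotient $\H_g[pN]/\{\pm I\}$ is canonically identified with $\H_g[p]\times_{\H_g}\H_g[N]$. Since $p$ is odd, Lemma~\ref{lem:brauer} gives
\[
    \ed(\H_g[p]\times_{\H_g}\H_g[N]/\H_g[N];p) = \ed(\H_g[pN]/\H_g[N];p) = 2g-1.
\]
Finally, for any $E_0\to X$ one has the general inequality $\ed(Y/X;p)\geq\ed(Y|_{E_0}/E_0;p)$: given a prime-to-$p$ extension $E\to X$ minimizing $\ed(Y/X;p)$, the fiber product $E\times_X E_0\to E_0$ is a prime-to-$p$ extension of $E_0$ along which $Y|_{E_0}$ is pulled back from the same compression target. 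Applied with $Y=\H_g[p]$, $X=\H_g$, $E_0=\H_g[N]$, this gives $\ed(\H_g[p]/\H_g;p)\geq 2g-1$ and completes the proof. The oddness of $p$ is used only through Lemma~\ref{lem:brauer} (the central $\{\pm I\}$ must have order coprime to $p$), and the main subtlety is the canonical identification $\A_{g,pN}|_{\ZZ_N}\cong\H_g[pN]$ of Galois covers together with the bookkeeping of the various $\pm 1$-identifications.
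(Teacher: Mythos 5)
Your proof is correct and follows the same strategy as the paper's, differing only in how the level-$1$ reduction is packaged. The paper verifies the hypotheses of Corollary~\ref{cor:essdimAg} with $N=1$ for $\ZZ = \H_g \subset \A_g$ (generic injective immersion via Torelli and Oort--Steenbrink, automorphism group $\{\pm 1\}$ for generic hyperelliptic Jacobians via Matsusaka, non-empty ordinary locus via Glass--Pries) and applies it directly; the auxiliary level $N' \geq 3$ and the central-$\{\pm 1\}$ step via Lemma~\ref{lem:brauer} are carried out once and for all inside the proof of that corollary for $N=1,2$. You instead invoke the $N\geq 3$ case of Corollary~\ref{cor:essdimAg} (at which point it is essentially Theorem~\ref{thm:essdimAg} plus Lemma~\ref{lem:equiedlem}) applied to $\ZZ_N \cong \H_g[N]$, identify $\A_{g,pN}|_{\ZZ_N}$ with $\H_g[pN]$, and redo the $\pm I$-quotient and the passage between levels $1$ and $N$ by hand. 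Both routes are sound; yours has the modest virtue of making visible that the oddness of $p$ enters only through Lemma~\ref{lem:brauer}, exactly as \ref{para:p=2} explains.

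Two points to tighten. First, ``Torelli together with a degree count'' establishes generic injectivity of $\H_g[N] \to \A_{g,N}$, but not that the map is an \emph{immersion} (injective on tangent spaces), which is what you actually need to conclude $\ZZ_N$ is smooth of relative dimension $2g-1$ so that Theorem~\ref{thm:essdimAg} applies. The paper cites \cite{OortSteenbrink}, Cor.~3.2 (infinitesimal Torelli along the hyperelliptic locus) precisely here, and you should as well. Second, your identifications implicitly use that $\H_g[pN] \to \H_g[N]$ is a \emph{connected} $\Sp_{2g}(\F_p)$-Galois cover, which requires full mod-$p$ symplectic monodromy of the hyperelliptic mapping class group for $p$ odd; this is classical but worth stating explicitly, since its failure at $p=2$ (where the monodromy is only $S_{2g+2}$) is exactly the obstruction discussed in \ref{para:p=2}.
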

\begin{proof} There is a natural map of $\Z[1/2]$-schemes $\H_g \rightarrow \M_g$ which is generically an injective immersion on every fibre over $\Z[1/2],$ as a general hyperelliptic curve has only one non-trivial automorphism \cite{Poonen}, Thm 1.
By the Torelli theorem and \cite{OortSteenbrink} Cor.~3.2, the map
$\M_g \rightarrow \A_g$ is also generically an injective immersion on every fibre over $\Z[1/2],$ and thus so is $\H_g \rightarrow \A_g.$

Now the Jacobian of a hyperelliptic curve over the generic point of $\H_g$ has automorphism group $\{\pm 1\}$ \cite{Matsusaka} p. 790, and $\H_g$ meets the ordinary locus of $\A_g\otimes \F_p$ by \cite{GlassPries}, Theorem 1.
Hence the theorem follows from Corollary \ref{cor:essdimAg}.
\end{proof}

\begin{para}\label{para:p=2}We remark that when $g=2,$ Theorem \ref{thm:edhyper} extends to $p=2,$ as this is a special case
of Theorem \ref{thm:genusg}. However, an extension to $p=2$ is not possible when $g> 2.$
To explain this, recall that for a finite group $G,$ and a prime $p,$ $\ed(G;p)$ denotes the supremum of $\ed(Y/X;p)$ taken over 
all $G$-covers $Y/X$ of finite type $K$-schemes, for any characteristic $0,$ algebraically closed field $K$ 
(the definition being independent of $K$).
The covering 
$$\M_{0,2g+2} \rightarrow \M_{0,2g+2}/S_{2g+2} \iso \H_g$$
 is a component of  $\H_g[2];$ for $g>2$, the cover is disconnected, and all components are isomorphic.\footnote{The monodromy of $\H_g[2]\to\H_g$ was computed by Jordan \cite[p. 364, \S 498]{Jo} to factor as $\SB_{2g+2}\onto S_{2g+2}\into\Sp_{2g}(\F_2)$, where $\SB_{2g+2}=\pi_1(\H_g)$ denotes the spherical braid group. See also \cite[p. 125]{Di}, or for  a more recent treatment, see the $q=2$ case of \cite[Theorem 5.2]{Mc}. The connected components of the cover are in bijection with the cosets $\Sp_{2g}(\F_2)/S_{2g+2}$. The equivalence of the components follows from the monodromy computation.} We conclude that
\begin{align*}
    \ed(\H_g[2]/\H_g;2) &= \ed(\M_{0,2g+2}/\H_g;2) \\
    & = \ed(S_{2g+2};2)\\
    & = g+1< 2g - 1
\end{align*}
where the second equality follows from the versality of $\M_{0,2g+2}$ for $S_{2g+2}$, and the third follows from \cite[Corollary 4.2]{MR}.

The lower bound $\ed(\H_g[2]/\H_g;2) \geq g+1$ can actually be recovered using the techniques of this paper. The point is that although
$\H_g \rightarrow \A_g$ is not generically an immersion in characteristic $2,$ one can show that the image of the map on tangent spaces
at a generic point has dimension $g+1.$ We are grateful to Aaron Landesman for showing us this calculation.
 \end{para}

\section{Essential dimension of congruence covers }\label{s:congcov}

\subsection{Forms of reductive groups}
In this subsection we prove a (presumably well known) lemma showing that for a reductive group
over a number field one can always find a form with given specializations at finitely many places.

\begin{para} Let $F$ be a number field and $G = G^{\ad}$ an adjoint, connected reductive group over $F.$
We fix algebraic closures $\bar F$ and $\bar F_v$ of $F$ and $F_v$ respectively, for every finite place $v$ of $F,$
as well as embeddings $\bar F \hookrightarrow \bar F_v.$

Recall \cite[XXIV, Thm.~1.3]{SGA3} that the
{\em automorphism group scheme} of $G$ is an extension
\begin{equation}\label{eq:autredgp}
1 \rightarrow G \rightarrow \Aut(G) \rightarrow \Out(G) \rightarrow 1
\end{equation}
where $\Out(G)$ is a finite group scheme.
If $G$ is split, then this extension is split and $\Out(G)$ is a constant
group scheme which can be identified with the group of automorphisms of the Dynkin diagram of $G.$

We will also make use of the notion of the {\em fundamental group} $\pi_1(G)$ \cite{Borovoi}.
This is a finite abelian group equipped with a $\Gal(\bar F/F)$-action. As an \'etale sheaf on $\Spec F,$ 
one has $\pi_1(G)\otimes \mu_n \iso \ker(G^{\sc}\rightarrow G)$ where $G^{\sc}$ is the simply connected 
cover of $G^{\der},$ and $n$ is the order $\ker(G^{\sc} \rightarrow G).$
\end{para}

\begin{lemma}\label{lem:twists} Let $G$ be a split, adjoint connected reductive group over $F,$ and $S$
a finite set of places of $F.$ The natural map of pointed sets
$$ H^1(F,\Aut(G)) \rightarrow \prod_{v \in S} H^1(F_v, \Aut(G)) $$
is surjective.
\end{lemma}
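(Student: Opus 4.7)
Since $G$ is split and adjoint, a choice of pinning splits the exact sequence \eqref{eq:autredgp}, so $\Aut(G)\simeq G\rtimes\Out(G)$ with $\Out(G)$ a constant finite group scheme (identified with the automorphism group of the Dynkin diagram of $G$). The plan is to attack the surjectivity in two steps, corresponding to this fibration: first for the quotient $\Out(G)$, then, via Serre's twisting formalism, for the fibers.

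First I would show that $H^1(F,\Out(G))\to\prod_{v\in S}H^1(F_v,\Out(G))$ is surjective. This is a Grunwald-type realization problem: given continuous $\phi_v:\Gal(\bar F_v/F_v)\to\Out(G)$ for $v\in S$, one produces a global continuous $\phi:\Gal(\bar F/F)\to\Out(G)$ whose localizations agree with the $\phi_v$ up to conjugation. The finite group $\Out(G)$ is solvable (built from cyclic groups of order two, copies of $S_3$ coming from factors of type $D_4$, and permutation groups of isomorphic simple factors), so Neukirch's solution of the Grunwald problem for solvable groups applies. The classical Grunwald--Wang exception, which would only affect cyclic $2$-power quotients of order at least $8$, does not arise.

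Next, given a global class $c\in H^1(F,\Out(G))$ matching the outer components of the prescribed local classes, the splitting of \eqref{eq:autredgp} provides a global lift $\tilde c\in H^1(F,\Aut(G))$. By Serre's twisting bijection, the fiber of $H^1(F,\Aut(G))\to H^1(F,\Out(G))$ over $c$ is in bijection with the pointed set $H^1(F,G^c)$, where $G^c$ is the inner form of $G$ obtained by twisting via $\tilde c$; the group $G^c$ remains adjoint, connected, and semisimple over $F$. We are therefore reduced to showing surjectivity of $H^1(F,G^c)\to\prod_{v\in S}H^1(F_v,G^c)$.

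For this last surjectivity I would use the central isogeny $1\to Z\to(G^c)^{\sc}\to G^c\to 1$, where $Z=\pi_1(G^c)$ is a finite $F$-group of multiplicative type. The Hasse principle of Kneser--Harder--Chernousov gives a bijection $H^1(F,(G^c)^{\sc})\iso\prod_{v\text{ real}}H^1(F_v,(G^c)^{\sc})$ and vanishing at non-archimedean $v$, so the obstruction to globalizing a local collection in $\prod_{v\in S}H^1(F_v,G^c)$ is carried by the boundary map to $\prod_{v\in S}H^2(F_v,Z)$. The Poitou--Tate nine-term exact sequence for $Z$ then identifies the image of $H^2(F,Z)$ in $\bigoplus_v H^2(F_v,Z)$ as the kernel of a pairing with the Pontryagin dual of $Z$, and in particular any prescribed collection at the finite set $S$ can be lifted globally after a correction at one auxiliary place outside $S$. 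The main obstacle will be tracking this correction step and checking that the resulting class in $H^2(F,Z)$ indeed lifts to $H^1(F,G^c)$ compatibly with the given local data; this amounts to chasing a diagram combining the Hasse principle for $(G^c)^{\sc}$ with Poitou--Tate duality, and is the technical heart of the argument.
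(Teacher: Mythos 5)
Your decomposition — treat the $\Out(G)$ quotient first via a Grunwald-type argument, then use Serre's twisting bijection to reduce the fibers to the surjectivity of $H^1(F,H)\to\prod_{v\in S}H^1(F_v,H)$ for an adjoint inner form $H$ — is exactly the structure of the paper's proof, and the two approaches to the adjoint step are essentially the same theorem packaged differently: you propose rederiving the needed surjectivity from the Hasse principle for $(G^c)^{\sc}$ and Poitou--Tate duality for $\pi_1(G^c)$, whereas the paper quotes Kottwitz's description of $H^1(F_v,H)$ via $\pi_1(H)_{\Gal(\bar F_v/F_v)}$ together with the global criterion from \cite[\S 2.2]{Kottwitzelliptic}, which encapsulates precisely that Hasse-principle-plus-duality argument (including the auxiliary-place trick you anticipate needing). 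If you carry out the diagram chase you flag as ``the technical heart,'' you will land on Kottwitz's statement.

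There is, however, a genuine gap in your treatment of the $\Out(G)$ step. You assert that $\Out(G)$ is solvable because it is built from order-two groups, copies of $S_3$ from $D_4$ factors, and permutation groups of isomorphic simple factors. But the lemma is stated for an arbitrary split adjoint \emph{reductive} group $G$, not a simple one, and the paper actually needs this generality: in Proposition \ref{prop:oldmainthm} the group $G$ has $r$ isomorphic simple factors, so $\Out(G)$ contains a copy of $S_r$, which is non-solvable for $r\ge 5$. Thus Neukirch's Grunwald theorem for solvable groups cannot be applied to $\Out(G)$ directly. The correct observation, which is what the paper's cited result \cite[Prop.~3.2]{Cal} exploits, is that the relevant solvability is on the \emph{source}: the decomposition groups $\Gal(\bar F_v/F_v)$ are pro-solvable (trivial or of order two at archimedean $v$), so each local image is a solvable subgroup of $\Out(G)$, and the globalization can be arranged regardless of whether the ambient finite group is solvable. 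Salvaging your argument requires replacing ``$\Out(G)$ is solvable, apply Neukirch'' with an argument at this level (for instance, via the result of Calegari the paper cites), and this is not a cosmetic fix — as written, the step is simply false in the cases where the lemma is applied.
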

\begin{proof} Recall the following facts about the cohomology of reductive groups over global and local fields \cite{Kottwitzelliptic}:
Let $H$ be an adjoint connected reductive group over $F.$
For any place $v$ of $F$, there is a map
$$ H^1(F_v, H) \rightarrow \pi_1(H)_{\Gal(\bar F_v/F_v)},$$
which is an isomorphism if $v$ is finite.
For any finite set of places $T$ of $F,$ consider the composite map
$$\xi: \prod_{v \in T} H^1(F_v, H) \rightarrow \prod_{v\in T} \pi_1(H)_{\Gal(\bar F_v/F_v)} \rightarrow
\pi_1(H)_{\Gal(\bar F/F)}.$$
Then by \cite[\S 2.2]{Kottwitzelliptic} $(x_v)_{v\in T} \in \prod_{v \in T} H^1(F_v, H)$ is in the image of $H^1(F,H)$ if $\xi((x_v)) = 0.$ Applying this to $T = S \cup \{v_0\}$ for some finite place $v_0 \notin S,$ we see that
\begin{equation}\label{eqn:innerforms}
H^1(F,H) \rightarrow \prod_{v \in S} H^1(F_v, H)
\end{equation}
is surjective.

Next we remark that the map
$$H^1(F, \Out(G)) \rightarrow \prod_{v \in S} H^1(F_v, \Out(G))$$
is surjective. Indeed, since $G$ is split a class in $H^1(F,\Out(G))$ is simply a conjugacy class of
maps $\Gal(\bar F/F) \rightarrow \Out(G),$ and similarly for the local classes, so this follows from
\cite[Prop.~3.2]{Cal}.

Now let $(x_v) \in \prod_{v \in S} H^1(F_v, \Aut(G))$ and let  $(\bar x_v) \in \prod_{v \in S} H^1(F_v, \Out(G))$ be the image of $(x_v).$ By what we have seen above, there exists $\bar x \in H^1(F, \Out(G))$ mapping to $(\bar x_v).$
Since we are assuming $G$ is split, (\ref{eq:autredgp}) is a split extension, so there is a $x \in H^1(F,\Aut(G))$
mapping to $\bar x.$ Let $H$ be the twist of $G$ by $x.$ Recall that this means that, if we choose a cocycle $x = (x_\sigma)_{\sigma \in \Gal(\bar F/F)}$ representing $x,$ then there is an isomorphism $\tau: G \iso H$ over $\bar F,$ such for $g \in G(\bar F)$ and $\sigma \in \Gal(\bar F/F),$ we have $\tau(\sigma(g)) = (\sigma(\tau(g)))^{x_{\sigma}}.$
We have a commutative diagram

$$\xymatrix{
H^1(F,\Aut (G)) \ar[r]\ar[d]^\sim_{\tau} & \prod_{v\in S} H^1(F_v,\Aut(G)) \ar[d]^\sim_{\tau|F_v} \\
H^1(F,\Aut(H)) \ar[r] & \prod_{v\in S} H^1(F_v,\Aut(H))
}
$$
such that the vertical maps send $x$ and $(x|_{F_v})_v$ to the trivial classes in the bottom line.
Thus it suffices to show that
$$ H^1(F,H) \rightarrow  \prod_{v\in S} H^1(F_v, H) $$
is surjective, which we saw above.
\end{proof}

\subsection{Shimura varieties}
In this subsection we apply the results of Section \ref{sec:moduli} to compute the essential dimension for congruence covers of Shimura varieties. This will be applied in the next subsection to give examples of congruence covers of locally symmetric varieties where our techniques give a lower bound on the essential dimension. Since our aim is to give lower bounds on essential dimension, it may seem odd that we work with the formalism of Shimura varieties, rather than the locally symmetric varieties which are their geometrically connected components. However, many of the results we need are in the literature only in the former language, and it would take more effort to make the (routine) translation.

\begin{para} Recall \cite[\S 1.2]{DeligneCorvallis} that a {\it Shimura datum} is a pair $(G,X)$ consisting of a connected reductive group $G$ over $\Q,$ and a $G(\RR)$ conjugacy class of maps $h: \CC^\times \rightarrow G(\RR).$ This data is required to satisfy certain properties which imply that the commutant of $h(\CC^\times)$ is a
subgroup $K_{\infty} \subset G(\RR)$ whose image in $G^{\ad}(\RR)$ is maximal compact and  $X = G(\RR)/K_\infty$ is a Hermitian symmetric domain.

Let $\AA$ denote the adeles over $\Q$ and $\AA_f$ the finite adeles. Let $K \subset G(\AA_f)$ be a compact open subgroup. The conditions on $(G,X)$ imply that for $K$ sufficiently small, the quotient
$$ \Sh_K(G,X) = G(\Q) \backslash X \times G(\AA_f)/K $$
has a natural structure of (the complex points of) an algebraic variety over a number field $E = E(G,X) \subset \CC,$ called the reflex field of $(G,X),$ which does not depend on $K.$ We denote this algebraic variety by the same symbol, $\Sh_K(G,X).$

Now let $V_{\Z} = \Z^{2g}$ equipped with a perfect symplectic form $\psi.$ Set $V = V_{\Z}\otimes_{\Z}\Q$ and
$\GSp = \GSp(V,\psi).$ We denote by $S^{\pm}$ the conjugacy class of maps $h: \CC^\times \rightarrow \GSp(\RR)$
satisying the following two properties
\begin{enumerate}
\item The action of the real Lie group $\CC^\times$ on $V_{\CC}$ gives rise to a Hodge structure of type $(-1,0)$ $(0,-1):$
$$ V_{\CC} \iso V^{-1,0} \oplus V^{0,-1}.$$
\item The pairing $(x,y) \mapsto \psi(x,h(i)y)$ on $V_{\RR}$ is positive or negative definite.
\end{enumerate}

Then $(\GSp, S^{\pm})$ is a Shimura datum called the {\it Siegel datum}, and $\Sh_K(\GSp, S^{\pm})$ has an interpretation as the moduli space of principally polarized abelian varieties with suitable level structure.

We say that $(G,X)$ is of {\em Hodge type} if there is a map $\iota: G \hookrightarrow \GSp$ of reductive groups over $\Q$
which induces $X \rightarrow S^{\pm}.$ Given any compact open subgroup $K \subset G(\AA_f)$ there exists
a $K' \subset \GSp(\AA_f)$ such that the above maps induce an embedding of Shimura varieties
\cite[Prop.~1.15]{DeligneTravaux}

$$ \Sh_K(G,X) \hookrightarrow \Sh_{K'}(\GSp, S^{\pm}).$$
\end{para}

\begin{para} Now fix a prime $p,$ and suppose that $G$ is the generic fibre of a reductive group $G_{\Z_{(p)}}$
over $\Z_{(p)}.$ If no confusion is likely to result we will sometimes write simply $G$ for $G_{\Z_{(p)}}.$
We take $K $ to be of the form $K_pK^p$ where $K_p = G(\Z_p)$ and $K^p \subset G(\AA_f^p),$ where $\AA_f^p$ denotes the finite adeles with trivial $p$-component.
Under these conditions, $p$ is unramified in $E,$ and for any prime $\lambda|p$ of $E,$ $\Sh_K(G,X)$ has a canonical smooth model
over $\O_{E_{\lambda}}$ \cite[Thm.~2.3.8]{Kisin}, \cite[Thm.~1]{KimMadapusi}, which we will denote by $\SSh_K(G,X).$
In particular, we may apply this to $\Sh_{K'}(\GSp, S^{\pm})$ if we take $K' = K'_pK^{\prime p}$ with
$K'_p = \GSp(V_{\Z},\psi)(\Z_p).$

Given $G_{\Z_{(p)}}$ and $(G,X)$ of Hodge type, we may always choose $(V,\psi),$ $\iota$ and $K'$ with
$K'_p = \GSp(V_{\Z},\psi)(\Z_p)$ such that $\iota$ induces a map of smooth $\O_{E_{\lambda}}$-schemes
$$ \iota: \SSh_K(G,X) \hookrightarrow \SSh_{K'}(\GSp,S^{\pm})$$
which is locally on the source an embedding \cite[4.1.5]{KisinPappas}, \cite[Prop.~2.3.5]{Kisin}.
That is, if $x \in \SSh_K(G,X)$ is a closed point, and $y = \iota(x),$
then the complete local ring at $x$ is a quotient of the  complete local ring at $y.$
In particular, there is an open subscheme of $\SSh_K(G,X),$ whose special fibre is dense in
$\SSh_K(G,X)\otimes \F_p,$ such that the restriction of $\iota$ to this open subscheme is a locally closed embedding.
Fix such choices. As in \S \ref{sec:moduli}, we denote by $\A$ the universal abelian scheme over $\SSh_{K'}(\GSp,S^{\pm})$.
Then we have
\end{para}

\begin{lemma}\label{lem:edineq} Suppose that the reflex field $E,$ admits a prime $\lambda|p$ with residue field $\F_p.$
Then a Galois closure of
the \'etale local system $\A[p]|_{\Sh_K(G,X)}\to \Sh_K(G,X)$ is given by a congruence cover $\SSh_K(G,X)_p\to \SSh_K(G,X)$,
with monodromy group isomorphic to $G^{\der}(\F_p)$ over every geometrically connected component of $\Sh_K(G,X).$

Moreover,
$$ \ed(\Sh_K(G,X)_p\to \Sh_K(G,X); p) = \dim_{\CC} X.$$
\end{lemma}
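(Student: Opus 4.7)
The plan is to deduce the lemma from Theorem \ref{thm:essdimAg} by pulling back along the Hodge embedding $\iota: \SSh_K(G,X) \hookrightarrow \SSh_{K'}(\GSp, S^{\pm})$. Since $\iota$ is locally a closed immersion on a dense open $\ZZ \subset \SSh_K(G,X)$, and the integral model of Kisin--Kim--Madapusi is smooth over $\O_{E_\lambda}$, the subscheme $\ZZ$ is smooth over $\O_{E_\lambda}$. The hypothesis that $\lambda \mid p$ has residue field $\F_p$ is exactly what is needed to ensure that the minuscule cocharacter attached to $X$ is defined over $\F_p$, so that the $\mu$-ordinary locus in the special fiber is nonempty and (in this residue-field-$\F_p$ case) coincides with the classically ordinary locus of the ambient Siegel variety $\SSh_{K'}(\GSp, S^{\pm}) = \A_{g,N}$. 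Hence $\ZZ_k$ meets $\A_{g,N}^{\ord}$, and Theorem \ref{thm:essdimAg} gives
$$\ed\bigl(\A[p]|_{\ZZ_K}/\ZZ_K;\, p\bigr) = \dim \ZZ_K = \dim_{\CC} X.$$

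Next I would identify the Galois closure of $\A[p]|_{\Sh_K(G,X)}$ with the stated congruence cover. Because $\iota$ arises from a group homomorphism $G \hookrightarrow \GSp$, the monodromy of $\A[p]$ on $\Sh_K(G,X)$ factors through $G(\F_p)$. On each geometrically connected component, written as $\Gamma \backslash X^+$ for an arithmetic congruence subgroup $\Gamma$, this monodromy equals the image of $\Gamma$ in $G(\F_p)$. Strong approximation for $G^{\sc}$ away from $p$ (valid because each $\Q$-simple factor of $G^{\sc}$ has $G^{\sc}(\Q_p)$ noncompact, by the Hermitian symmetric hypothesis on $X$) together with Lang's theorem over $\F_p$ (giving $H^1(\F_p, G^{\sc}) = 1$) identifies this image with $G^{\der}(\F_p)$. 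By Lemma \ref{lem:equiedlem}, the Galois closure of $\A[p]|_{\Sh_K(G,X)}$ corresponds to the kernel of this monodromy representation, which is the pullback of the principal level-$p$ congruence subgroup in $K_p$, so the Galois closure is precisely $\SSh_K(G,X)_p$, and moreover
$$\ed\bigl(\Sh_K(G,X)_p/\Sh_K(G,X);\, p\bigr) = \ed\bigl(\A[p]|_{\Sh_K(G,X)}/\Sh_K(G,X);\, p\bigr).$$

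To conclude, I would combine the two previous steps: the pullback of $\A[p]$ along $\ZZ_K \to \Sh_K(G,X)$ shows that $\ed$ on $\Sh_K(G,X)$ is at least the $\ed$ on $\ZZ_K$, which is $\dim_{\CC} X$; and the trivial upper bound $\ed(Y/X; p) \leq \dim X$ gives equality. The main obstacle in this plan is the verification of the ordinarity claim: showing that $\ZZ_k$ meets the ordinary locus of the ambient Siegel variety. This requires invoking the non-emptiness of the $\mu$-ordinary locus for Hodge-type Shimura varieties together with the coincidence of $\mu$-ordinary and classically ordinary $p$-divisible groups when the reflex residue field is $\F_p$; both facts depend on precisely the hypothesis on $\lambda$ that is built into the statement. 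A secondary subtlety is the group-theoretic identification of the monodromy exactly as $G^{\der}(\F_p)$, which relies on the conjunction of strong approximation and Lang's theorem applied to the central isogeny $G^{\sc} \to G^{\der}$.
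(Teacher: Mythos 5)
Your proposal is correct and follows essentially the same route as the paper: both use the Hodge embedding to reduce to Theorem~\ref{thm:essdimAg}, invoke Wortmann's theorem together with the $\kappa(\lambda)=\F_p$ hypothesis (which forces the $\mu$-ordinary locus to coincide with the classically ordinary locus) to get non-emptiness of the ordinary locus in the image, and use Lemma~\ref{lem:equiedlem} to pass from the local system $\A[p]$ to its Galois closure, identified with the principal $p$-level congruence cover. Two small points on the monodromy step. First, strong approximation for $G^{\sc}$ away from $\infty$ requires $G^{\sc}(\RR)$, not $G^{\sc}(\Q_p)$, to be noncompact on each $\Q$-simple factor; this is what the Hermitian symmetric hypothesis on $X$ actually supplies. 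Second, when $G^{\der}$ is not simply connected, the vanishing $H^1(\F_p, G^{\sc})=1$ from Lang's theorem does not by itself give surjectivity of $G^{\sc}(\F_p)\to G^{\der}(\F_p)$: the cokernel is $H^1(\F_p, \ker(G^{\sc}\to G^{\der}))$, which need not vanish, so identifying the monodromy as exactly $G^{\der}(\F_p)$ requires a bit more care in that case. The paper sidesteps spelling out strong approximation and Lang by directly citing Deligne's Corvallis description of the topological fundamental group of a geometrically connected component as a $T$-congruence subgroup of $G^{\der}(\Q)$ with $p\notin T$, from which surjectivity onto $G^{\der}(\F_p)$ is read off.
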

\begin{proof} Let $S$ and $S'$ denote geometrically connected components of $\Sh_K(G,X)$ and $\Sh_{K'}(\GSp,S^{\pm})$
respectively with $S \subset S'.$
The \'etale local system $\A[p]|_{S_{\bar \Q}}$ corresponds to a representation
$$ \rho_G: \pi_1(S_{\bar \Q}) \rightarrow \pi_1(S'_{\bar \Q}) \rightarrow \GSp(\F_p)$$
where $\GSp = \GSp (V_{\Z},\psi),$ as above, and $\rho_G$ has image $G^{\der}(\F_p)$:
By the smooth base change theorem and the comparison between the algebraic \'etale and topological
fundamental groups for varieties over $\CC,$ it suffices to check that the composite of the maps of topological fundamental groups
$$ \pi_1^{\top}(S(\CC)) \rightarrow \pi_1^{\top}(S'(\CC)) \rightarrow \GSp(\F_p)$$
has image $G^{\der}(\F_p).$
This sequence of maps may be identified with \cite[2.1.2, 2.0.13]{DeligneCorvallis}
$$ \Gamma \rightarrow \Gamma' \rightarrow \GSp(\F_p) $$
where $\Gamma$ and $\Gamma'$ are $T$-congruence subgroups of $G^{\der}(\Q)$ and
$\Sp(\Q)$-respectively, for some finite set of finite places $T$ not containing $p.$
In particular, the image of the composite map is $G^{\der}(\F_p).$

By  Lemma \ref{lem:equiedlem} we have
$$ \ed(\Sh_K(G,X)_p\to \Sh_K(G,X); p) = \ed(\A[p]|_{S_{\bar \Q}}/S_{\bar \Q}; p).$$
We now consider the map of integral models $\iota,$ corresponding to the prime $\lambda|p$ of the lemma.
Since  $\lambda$ has residue field $\mathbb F_p,$ every component of the image $\iota$ meets the ordinary locus of
$\SSh_{K'}(\GSp,S^{\pm})$ by \cite[Thm.~1.1]{Wort}.  Now using that for some $N$ with $(N,p) = 1,$ there is a surjective map $\A_{g,N} \rightarrow \Sh_{K'}(\GSp,S^{\pm}),$
and Theorem \ref{thm:essdimAg}, we conclude
$$ \ed(\A[p]|_{S_{\bar \Q}}/S_{\bar \Q}; p) = \dim S = \dim X.$$
\end{proof}

\subsection{Congruence covers} It will be more convenient to state the results of this subsection in terms of locally symmetric varieties. These are geometrically connected components of the Shimura varieties discussed in the previous subsection.

\begin{para} Let $G$ be a semisimple, almost simple group over $\Q.$ We will assume that $G$ is of classical type, so that
(the connected components of) its Dynkin diagram are of type $A,B,C$ or $D.$

Let $K_\infty \subset G(\RR)$ be a maximal compact subgroup. We will assume that $X = G(\RR)/K_\infty$ is a Hermitian
symmetric domain. The group $G^{\ad}(\RR)$ is a product of simple groups $G_i(\RR),$ for $i$ in some index set $I.$
We denote by $I_{nc}$ (resp.~$I_c$) the set of $i$ with $G_i$ non-compact (resp.~compact).
Then $X$ is a product of the irreducible Hermitian symmetric domains $X_i = G_i(\RR)/K_i,$  for $i \in I_{nc},$ where
$K_i \subset G_i(\RR)$ is maximal compact. We use Deligne's notation \cite{DeligneCorvallis}
for the classification of these irreducible Hermitian symmetric domains. Since we are assuming $G$ is of classical type, for  $i \in I_{nc},$
$X_i$ is  of type $A, B, C,$ $D^{\RR}$ or $D^{\mathbb H}.$
The group $G_i(\RR)$ is either the adjoint group of $U(p,q)$ in case of type $A,$ of $\Sp(2n)$ in the case of type $C,$ of $\SO(n,2)$ in case of type $B$ or $D^{\RR},$ and of $\SO^*(2n),$ an inner form of $\SO(2n),$ if $G_i$ is of type $D^{\mathbb H}.$

Since $G$ is almost simple, for $i \in I_{nc}$ the $X_i$ are all of the same type, except possibly if $G$ is of type $D,$ in which
case it is possible that both factors of type $D^{\RR}$ and $D^{\mathbb H}$ occur among the $X_i.$
We will say that $G$ is of {\em Hodge type} if all the factors $X_i$ are of the same type $A,B,C,D^{\RR}, D^{\mathbb H}$
and the following condition holds: $G$ is simply connected, unless the $X_i$ are of type $D^{\mathbb H},$ in which case
$G(\CC)$ is a product of special orthogonal groups.
\end{para}

\begin{para}
The Dynkin diagram $\Delta(G)$ is equipped with a set of vertices $\Sigma(G)$ which is described as follows
(cf.~\cite{DeligneCorvallis} \S 1.2, 1.3). For $i \in I_{nc},$ $K_i \subset G_i$ is the centralizer of a rank $1$ compact torus
$U(1) \subset G_i,$ which is the center of $K_i.$ Thus there are two cocharacters $h,h^{-1}: U(1) \rightarrow G_i$ which identify
$U(1)$ with this compact torus. These cocharacters are miniscule, and each corresponds to a vertex of $\Delta(G_i).$ The two vertices
are distinct exactly when $h,h^{-1}$ are not conjugate cocharacters. In this case, they are exchanged by the {\em opposition involution} of
$\Delta(G_i),$ which also gives the action of complex conjugation on $\Delta(G_i).$
We set $\Sigma(G)$ to be the union of all the vertices above. Thus $\Sigma(G) \cap \Delta(G_i)$ is empty if $i \in I_c,$
and consists of one or two vertices if $i \in I_{nc}.$ In the latter case it consists of two vertices if and only if $G_i(\RR)$ is either
the adjoint group of $U(p,q)$ with $p \neq q,$ or of $\SO^*(2n)$ with $n$ odd.
\end{para}

\begin{para} Fix an embedding $\bar \Q \hookrightarrow \Bbb C.$ The Galois group $\Gal(\bar \Q/\Q)$ acts on $\Delta(G).$
We consider a subset $\Sigma \subset \Sigma(G)$ such that $\Delta(G_i) \cap \Sigma$ consists of one element for $i \in I_{nc}.$.

We call $G$ {\em $p$-admissible} if $G$ splits over an unramified extension of $\Q_p,$ and for some embedding
$\bar \Q \hookrightarrow \bar \Q_p,$ and some choice of $\Sigma,$ the action of $\Gal(\bar \Q_p/\Q_p)$ leaves $\Sigma$ invariant.
This definition may look slightly odd; it will be used to guarantee that the reflex field of a Shimura variety built out of $G$ has at least one prime where the Shimura variety has a non-empty ordinary locus.
\end{para}

\begin{para}
For any reductive group $H$ over $\Q$ a {\em congruence subgroup} $\Gamma \subset H(\Q)$ is a group of the form $H(\Q) \cap K$ for some compact open subgroup $K \subset H(\AA_f).$ An {\em arithmetic lattice} $\Gamma \subset H(\Q)$ is a finite index subgroup of a
congruence subgroup. If $i:G' \rightarrow G$ is a map of reductive groups whose kernel is in the center of $G',$ and $\Gamma' \subset G'(\Q)$ is an arithmetic lattice then $i(\Gamma') \subset G(\Q)$ is an arithmetic lattice.

If $\Gamma \subset G(\Q)$ is an arithmetic lattice which acts freely on $X$ then $M_{\Gamma}: = \Gamma\backslash X$ has a natural structure of algebraic variety over $\bar \Q$ \cite[\S 2]{DeligneCorvallis}. For any arithmetic lattice $\Gamma$ there is a finite index subgroup which acts freely on $X.$ For a Shimura datum $(G,X)$ the geometrically connected components of $\Sh_K(G,X)$ have the form
$\Gamma\backslash X,$ where $\Gamma \subset G^{\ad}(\Q)$ is the image of a congruence subgroup of $G^{\der}(\Q);$ this was already used in the proof of Lemma \ref{lem:edineq}.

Now suppose that $G$ is almost simple and $G$
splits over an unramified extension of $\Q_p.$ Then $G$ extends to a reductive group $G_{\Z_p}$ over $\Z_p.$
Let $K = K^pK_p \subset G(\AA_f)$ and $K_1 = K^pK^1_p \subset G(\AA_f)$ be compact open, with $K_p\subset G(\AA_f^p),$
$K_p = G_{\Z_p}(\Z_p)$ and $K^1_p = \ker(G_{\Z_p}(\Z_p) \rightarrow G_{\Z_p}(\F_p)).$
Let $\Gamma = G(\Q) \cap K$ and $\Gamma_1 = G(\Q) \cap K_1.$ We call a covering of the form
$\Gamma_1\backslash X \rightarrow \Gamma \backslash X$ a {\em principal $p$-level covering.}
\end{para}

\begin{thm}\label{thm:essdimcong}
Let $G'$ be an almost simple group of Hodge type which is $p$-admissible, and let $X = G'(\RR)/K_{\infty}.$
Then for any principal $p$-level covering.
$\Gamma_1\backslash X \rightarrow \Gamma \backslash X$ we have
$$ \ed(\Gamma_1\backslash X \rightarrow \Gamma \backslash X; p) = \dim X. $$
\end{thm}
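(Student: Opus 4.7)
The plan is to realize the principal $p$-level covering $\Gamma_1 \backslash X \to \Gamma \backslash X$ as a geometrically connected component of the $p$-torsion cover of a Hodge-type Shimura variety, and then invoke Lemma \ref{lem:edineq} directly.

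First I would use the hypothesis that $G'$ is of Hodge type to construct a reductive $\Q$-group $G$ together with a Shimura datum $(G, X)$ of Hodge type (in the sense of \S\ref{sec:moduli}), with $G^{\der} \cong G'$. Concretely, when $G'$ is simply connected (types $A, B, C, D^{\RR}$), the group $G$ is built as a quotient of $G' \times T$ by a finite central subgroup, for a suitable $\Q$-torus $T$, and the Hodge-type embedding $\iota : G \hookrightarrow \GSp$ is produced via the standard symplectic, spin, or half-spin representations attached to the type; when $X$ is of type $D^{\mathbb H}$, the same construction works, using that $G'(\CC)$ is already a product of special orthogonal groups. Since $G'$ splits over an unramified extension of $\Q_p$, the torus $T$ can be chosen unramified at $p$, so that $G$ extends to a reductive $\Z_{(p)}$-group $G_{\Z_{(p)}}$ whose $\Z_p$-points induce exactly the integral structure on $G'$ underlying the given $\Gamma$ and $\Gamma_1$.

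Next I would verify that the reflex field $E = E(G, X)$ has a prime $\lambda \mid p$ with residue field $\F_p$. The reflex field is the fixed field in $\bar\Q$ of the stabilizer of the subset $\Sigma \subset \Sigma(G)$ cut out by $X$. The $p$-admissibility hypothesis furnishes an embedding $\bar\Q \hookrightarrow \bar\Q_p$ and a choice of $\Sigma$ left stable by the decomposition group $\Gal(\bar\Q_p/\Q_p)$; this decomposition group therefore fixes $E$ pointwise inside $\bar\Q_p$, so the induced prime $\lambda$ of $E$ over $p$ satisfies $E_\lambda = \Q_p$ and hence has residue field $\F_p$.

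With both hypotheses of Lemma \ref{lem:edineq} met, taking $K = K^p K_p$ with $K_p = G_{\Z_p}(\Z_p)$ and $K_1 = K^p K_p^1$, and choosing $K^p$ so that $\Gamma$ and $\Gamma_1$ are exactly the arithmetic subgroups cut out by $K$ and $K_1$, Lemma \ref{lem:edineq} yields
$$ \ed(\Sh_K(G,X)_p \to \Sh_K(G,X); p) = \dim_\CC X. $$
The cover $\Sh_K(G,X)_p \to \Sh_K(G,X)$ is the Galois closure of $\A[p]|_{\Sh_K(G,X)}$, with monodromy $G'(\F_p)$ on each geometrically connected component; over $\bar\Q$ those components are all of the form $\Gamma_g \backslash X$, mutually isomorphic via Hecke translation, and the cover restricts on each to a principal $p$-level cover. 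Since the essential $p$-dimension of a finite \'etale cover over a disjoint union of isomorphic connected components equals its essential $p$-dimension over any one component, I would conclude $\ed(\Gamma_1 \backslash X \to \Gamma \backslash X; p) = \dim X$.

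The hardest step is the first: assembling one reductive $\Q$-group $G$ that is simultaneously of Hodge type as a Shimura datum, has the correct real form (giving the prescribed domain $X$), is unramified at $p$ with the right $\Z_{(p)}$-model, and has $G^{\der} \cong G'$ matching $\Gamma$. Lemma \ref{lem:twists} is the key mechanism for producing a global form with prescribed local specializations at $\infty$ and at $p$. The $p$-admissibility axiom is then exactly what is needed to make the reflex-field condition of Lemma \ref{lem:edineq} hold; without it, the lemma would not be directly applicable and a different argument (likely requiring a more delicate ordinary-locus analysis at a prime of larger residue degree) would be needed.
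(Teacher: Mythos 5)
Your proposal follows the same overall strategy as the paper: construct a Hodge-type Shimura datum $(G,X)$ with $G^{\der}\cong G'$, check that $E(G,X)$ has a prime over $p$ with residue field $\F_p$, invoke Lemma~\ref{lem:edineq}, and pass to geometrically connected components. However, there is a genuine gap in the reflex-field step.

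You assert that ``the reflex field is the fixed field in $\bar\Q$ of the stabilizer of the subset $\Sigma\subset\Sigma(G)$ cut out by $X$.'' That description is correct for the \emph{adjoint} reflex field $E(G^{\ad},X^{\ad})$ (this is essentially \cite[Prop.~2.3.6]{DeligneCorvallis}), but not for $E(G,X)$. Since $G$ is not adjoint, the conjugacy class of Hodge cocharacters $\{\mu_h\}$ in $G$ also carries data from the abelianization $G\to G^{\ab}$, and $E(G,X)$ is in general a nontrivial extension of $E(G^{\ad},X^{\ad})$. The hypothesis that $G'$ is $p$-admissible, combined with your observation that $\Gal(\bar\Q_p/\Q_p)$ stabilizes $\Sigma$, only yields $E(G^{\ad},X^{\ad})_{\lambda'}=\Q_p$. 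It does not control the residue field degree of primes of $E(G,X)$ above $\lambda'$. Choosing the auxiliary torus $T$ merely unramified at $p$ is not enough: an unramified extension of $\Q_p$ of degree $>1$ has residue field strictly larger than $\F_p$, which would break the hypothesis of Lemma~\ref{lem:edineq}. What is needed, and what the paper provides by invoking \cite[Prop.~2.3.10]{DeligneCorvallis}, is a construction of $(G,X)$ for which all primes of $E(G^{\ad},X^{\ad})$ above $p$ \emph{split completely} in $E(G,X)$; only then does $\kappa(\lambda)=\F_p$ follow. Without that additional step your argument does not establish the residue-field hypothesis, and the appeal to Lemma~\ref{lem:edineq} is not justified. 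The rest of your outline (the restriction to components, the use of $p$-admissibility to control the adjoint reflex field) matches the paper's proof.
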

\begin{proof}

We slightly abuse notation and write $G^{\ad}$ for $G^{\prime \ad}.$
Let $\Sigma \subset \Sigma(G')$ be a subset of the form described above. This corresponds to a $G^{\ad}(\RR)$-conjugacy class of cocharacters $h: U(1) \rightarrow G^{\ad},$ which we denote by $X^{\ad}.$
Then $(G^{\ad}, X^{\ad})$ is a Shimura datum and its reflex field corresponds to the subgroup of $\Gal(\bar \Q/\Q)$ which takes $\Sigma$ to itself  \cite[Prop.~2.3.6]{DeligneCorvallis}. Since $G'$ is $p$-admissible, there is a choice of $\Sigma,$ and a prime $\lambda'|p$ of $E(G^{\ad},X^{\ad})$
with $\kappa(\lambda') = \F_p.$

Then one sees using \cite[Prop.~2.3.10]{DeligneCorvallis} that one can choose a Shimura datum of Hodge type $(G,X)$ with $G^{\der} = G'$
and adjoint Shimura datum $(G^{\ad}, X^{\ad}),$ and so that
all primes of $E(G^{\ad},X^{\ad})$ above $p$ split completely in $E(G,X).$ In particular, any prime $\lambda|\lambda'$ of $E(G,X)$
has residue field $\F_p.$ We have verified that $(G,X)$ satisfies the hypotheses of Lemma \ref{lem:edineq}. The theorem follows by restricting the map of that lemma to geometrically connected components.
\end{proof}

\begin{para} We can make the condition of $p$-admissibility of $G'$ in Theorem \ref{thm:essdimcong} somewhat more explicit, if we assume that
$G^{\prime \ad}(\RR)$ has no compact factors.
\end{para}

\begin{cor}\label{cor:essdimcong} Let $G'$ be an almost simple group which splits over an unramified extension of $\Q_p.$
Suppose either that
\begin{enumerate}
\item $G'$ splits over $\Q_p,$ or
\item The irreducible factors of $G^{\prime \ad}(\RR)$ are all isomorphic to the adjoint group of one of $U(n,n),$ $\SO(n,2)$ with $n \neq 6,$ or $\Sp(2n)$ for some positive integer $n.$
\end{enumerate}
Then $G'$ is $p$-admissible, and for any principal $p$-level covering
$\Gamma_1\backslash X \rightarrow \Gamma \backslash X$ we have
$$ \ed(\Gamma_1\backslash X \rightarrow \Gamma \backslash X; p) = \dim X.$$
\end{cor}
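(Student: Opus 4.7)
The plan is to reduce via Theorem \ref{thm:essdimcong} to showing that $G'$ is $p$-admissible, i.e.\ to exhibiting a subset $\Sigma \subset \Sigma(G')$ meeting each $\Delta(G_i)$ ($i \in I_{nc}$) in exactly one vertex and stable under the $\Gal(\bar\Q_p/\Q_p)$-action on $\Delta(G')$ induced by the fixed embedding $\bar\Q \hookrightarrow \bar\Q_p$. Case (1) will be immediate: when $G'_{\Q_p}$ is split, its root datum carries trivial Galois action, so $\Gal(\bar\Q_p/\Q_p)$ fixes $\Delta(G')$ pointwise and any allowable $\Sigma$ works.

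The substance of the argument will lie in case (2). First I would observe that the hypothesis forces $I_c = \emptyset$, and since $G'$ is almost simple over $\Q$ all simple factors of $G'_{\bar\Q}$ are Galois-conjugate and hence mutually isomorphic; so every connected component of $\Delta(G')$ is of a common type drawn from $A_{2n-1}$, $B_\ell$, $C_n$, or $D_\ell$ with $\ell \neq 4$. None of $U(n,n)$, $\SO(n,2)$, or $\Sp(2n)$ falls in the exceptional ``$p \neq q$'' (type $A$) or ``$\SO^*(2n)$ with $n$ odd'' cases, so $\Sigma(G') \cap \Delta(G_i)$ is a single vertex and $\Sigma$ is uniquely determined.

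The hard part will be verifying that this $\Sigma$ is $\Gal(\bar\Q_p/\Q_p)$-invariant. The Galois action permutes components and acts on each by a diagram automorphism, so it will suffice to confirm that, for each type, the distinguished vertex of a component is characterized by intrinsic combinatorial data---hence preserved by any component isomorphism---and is fixed by every diagram automorphism of its component. A direct inspection of the four types should confirm this: in $A_{2n-1}$ the distinguished vertex is the central node $\alpha_n$, singled out as the unique fixed point of the opposition involution; in $B_\ell$ and $C_n$ the outer automorphism group is trivial so the uniquely determined vertex is automatically fixed; and in $D_\ell$ with $\ell \neq 4$ the outer automorphism group is $\Z/2\Z$ swapping the two spinor end nodes, whereas the vertex distinguished by the Hermitian structure on $\SO(n,2)$ is the vector-representation node at the opposite end of the diagram and lies outside this $\Z/2\Z$-orbit. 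The exclusion $n+2 \neq 8$, equivalently $\ell \neq 4$, is precisely what avoids the triality of $D_4$ that would otherwise move our vertex; this is the only point at which the ``$n \neq 6$'' hypothesis intervenes. Completing these checks establishes $p$-admissibility and hence the corollary.
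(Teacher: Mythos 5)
Your proposal is correct and follows essentially the same route as the paper: show $p$-admissibility, i.e.\ exhibit a Galois-stable $\Sigma$, with case (1) trivial and case (2) reducing to the observation that $\Sigma = \Sigma(G')$ is forced and that each vertex of $\Sigma(G')$ is preserved by every graph automorphism of its component (the only place $n\neq 6$, i.e.\ $\ell\neq 4$, is used). The combinatorial case-check you carry out is precisely what the paper compresses into ``one checks using the classification of [DeligneCorvallis]''; your added emphasis that the distinguished vertex is \emph{intrinsically characterized}, and hence sent to the distinguished vertex under any isomorphism of components (not merely fixed by automorphisms of one component), is a useful clarification of the step the paper summarizes as ``it follows that $\Gal$ leaves $\Sigma(G)$ stable''.
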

\begin{proof} If $G'$ splits over $\Q_p$ then $\Gal(\bar \Q_p/\Q_p)$ acts trivially on $\Delta(G),$ and so leaves any choice of $\Sigma$ stable.
For (2), one checks using the classification of \cite{DeligneCorvallis} that in each of these cases, $\Sigma = \Sigma(G),$
and a vertex $v \in \Sigma(G)$ is stable by any automorphism of the connected component of $\Delta(G)$ containing $v.$
It follows that  $\Gal(\bar \Q/\Q)$ leaves $\Sigma(G)$ stable.
\end{proof}

\begin{para} The above results give examples of coverings for which one can compute the essential $p$-dimension. It may be of interest to compare these numbers with the essential $p$-dimension of the corresponding group, which can in principle be computed using the Karpenko-Merkjurev theorem \cite{KarMer}.

We call a reductive group $H$ almost absolutely simple if $H$ is semisimple and $H^{\ad}$ is absolutely simple. (That is, it remains simple
over an algebraic closure).We have the following result.
\end{para}

\begin{prop}\label{prop:oldmainthm} Let $H$ be a classical, almost absolutely simple group over $\F_q,$ with $q = p^r.$
Then there exists a Hermitian symmetric domain $X$ attached to an adjoint $\Q$-group $G,$  and arithmetic lattices
$\Gamma' \subset \Gamma \subset G(\Q)$ corresponding to a principal $p$-covering, with $\Gamma/\Gamma' \iso H(\F_q),$ such that
$$ e = \ed(\Gamma'\backslash X \rightarrow \Gamma\backslash X; p)$$
satisfies

\begin{itemize}
\item If $H$ is a form of $\SL_n$ which is split if $n$ is odd, then $e = r\lfloor \frac{n^2}4 \rfloor.$
\item If $H$ is $\Sp_{2n}$ then $e = r(\frac{n^2+n}2).$
\item If $H$ is a split form of  $\SO_{2n}$ then $e = r(\frac{n^2-n}2).$
\item If $H$ is a form of $\Spin_n$ and $H$ is not of type $D_4,$ then $e(H(\F_q)) = r(n-2).$
\end{itemize}
\end{prop}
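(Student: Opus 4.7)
The strategy is to realize $H(\F_q)$ as the monodromy $G^{\der}(\F_p)$ of a principal $p$-level covering of a locally symmetric variety $\Gamma\backslash X$ for which $\dim X$ equals the prescribed value of $e,$ and then invoke Theorem \ref{thm:essdimcong}. Concretely, I would build a $\Q$-almost simple group $G$ whose completion at $p$ is a restriction of scalars that reduces to $H$ over $\F_q,$ and whose archimedean form yields the expected Hermitian symmetric domain of the right dimension.

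For the construction, fix a totally real number field $F/\Q$ of degree $r$ in which $p$ is inert; such an $F$ exists, for example by taking $F = \Q(\alpha)$ with $\alpha$ a root of a totally real polynomial whose reduction mod $p$ is irreducible of degree $r.$ Then $F_p$ is the unramified degree-$r$ extension of $\Q_p,$ with residue field $\F_q.$ Let $H_0$ be the split, simply connected, absolutely simple $\Q$-group of the Dynkin type of $H$ (or split $\SO_{2n}$ in the $D^{\mathbb H}$ case). Applying Lemma \ref{lem:twists} to $H_0^{\ad}$ regarded as an $F$-group, I would prescribe the local data at the finite set $S = \{v_p\} \cup \{\text{archimedean places}\}$ of $F$ as follows: at $v_p$ the inner/outer form whose reduction is $H^{\ad};$ at each archimedean place the real form whose adjoint has the specified Hermitian symmetric type — namely the adjoint of $\SU(\lfloor n/2 \rfloor, \lceil n/2 \rceil)$ in type $A,$ of $\Sp(2n,\RR)$ in type $C,$ of $\SO^*(2n)$ in the split $\SO_{2n}$ case, and of $\SO(n-2,2)$ in the $\Spin_n$ case. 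Lifting the resulting adjoint $F$-form to the appropriate isogeny cover (simply connected, or the $\SO$ form in the $D^{\mathbb H}$ case) produces an $F$-group $H'',$ and I set $G = \Res_{F/\Q} H''.$

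Then $G$ is $\Q$-almost simple, of Hodge type (simply connected in types $A,B,C,D^{\RR},$ while in the $D^{\mathbb H}$ case $G(\CC) \cong (\SO_{2n,\CC})^r$ as required), and $p$-admissible since $G$ splits over the unramified extension $F_p/\Q_p.$ By construction $G(\Q_p) = H''(F_p)$ with $G_{\Z_p}(\Z_p) = H''(\mathcal{O}_{F_p}),$ so the reduction mod $p$ of the principal $p$-level subgroup equals $H''(\F_q) = H(\F_q);$ by Lemma \ref{lem:edineq} this is the monodromy $G^{\der}(\F_p).$ The Hermitian symmetric domain $X$ is a product over the $r$ real embeddings of $F$ of irreducible Hermitian symmetric domains of the prescribed type, with complex dimensions $\lfloor n/2 \rfloor\lceil n/2 \rceil = \lfloor n^2/4 \rfloor,$ $n(n+1)/2,$ $n(n-1)/2,$ and $n-2$ respectively; multiplying by $r$ recovers the four stated values of $e.$ Theorem \ref{thm:essdimcong} applied to the $p$-level cover then gives $e = \dim X,$ as desired.

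The main obstacle is the simultaneous realization via Lemma \ref{lem:twists} of the prescribed local forms at $v_p$ and at the archimedean places of $F,$ together with verification of the Hodge-type condition. This reduces to a case check using Deligne's classification \cite{DeligneCorvallis}: one must confirm that each prescribed pair (real Hermitian type, $p$-adic inner/outer form) is consistent with the existence of a global $F$-form of $H_0^{\ad},$ and that lifting from the adjoint form to the simply connected cover (or the $\SO$ form in the $D^{\mathbb H}$ case) preserves the prescribed completions at $v_p$ — the latter being automatic here since the covers $H^{\sc} \to H^{\ad}$ and $\SO \to \PO$ induce surjections on the relevant Galois cohomology groups at unramified primes and at archimedean places.
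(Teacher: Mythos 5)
Your overall strategy — build a $\Q$-almost simple group $G$ of Hodge type matching the prescribed real form and the prescribed reduction mod $p$, then apply Theorem~\ref{thm:essdimcong} — is the same as the paper's. The paper works directly with $\bar G = \Res_{\F_q/\F_p}H$, lifts it uniquely to a group $G_{\Z_p}$ over $\Z_p$, and applies Lemma~\ref{lem:twists} over $\Q$ at $S=\{p,\infty\}$; you pass through a degree-$r$ totally real field $F$ with $p$ inert and set $G = \Res_{F/\Q}H''$. These are essentially equivalent constructions (the $\Z_p$-lift of $\Res_{\F_q/\F_p}H$ is a $\Res_{W(\F_q)/\Z_p}$), so that step is a cosmetic variation.

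The genuine gap is your verification of $p$-admissibility. You assert that ``$G$ is $p$-admissible since $G$ splits over the unramified extension $F_p/\Q_p$,'' but splitting over an unramified extension of $\Q_p$ is only the first half of the definition. The other half requires choosing a subset $\Sigma \subset \Sigma(G)$ meeting each noncompact factor $\Delta(G_i)$ in one vertex, such that the $\Gal(\bar\Q_p/\Q_p)$-action on $\Delta(G)$ preserves $\Sigma$. Here $\Sigma(G)$ is determined by the \emph{real} form of $G$, while the Galois action on $\Delta(G)$ is determined by the form at $p$; these are prescribed independently in your construction, and there is no reason for them to be automatically compatible. The paper handles this by a case analysis (after Corollary~\ref{cor:essdimcong} disposes of the easy cases): when $H$ is split of type $\SL_n$ ($n$ odd) or $\SO_{2n}$ ($n$ odd), $\Sigma(G)\cap\Delta(G_i)$ has two vertices exchanged by the opposition involution, so one can take $\Sigma$ to be a $\Gal(\bar\Q_p/\Q_p)$-orbit of a single vertex; when $H$ is split of type $\SO_{2n}$ with $n$ \emph{even}, $\Sigma(G)\cap\Delta(G_i)$ is a single vertex and $\Sigma(G)$ need not be Galois-stable at all — the paper must replace $G$ by an inner twist $G_1$ (constructed via the surjectivity (\ref{eqn:innerforms})) whose real form has $\Sigma(G_1)$ equal to the desired $\Gal(\bar\Q_p/\Q_p)$-orbit $\Sigma_1$, while remaining an inner form of $G$ over $\Q_p$. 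Your proposal omits this inner twist entirely, so the $D^{\mathbb H}$, $n$ even case is not covered. The remark about ``surjections on the relevant Galois cohomology groups'' for the isogeny lift is also vaguer than needed (one really wants uniqueness of the lift from the adjoint form to the prescribed isogeny cover), but that is a minor point compared to the $p$-admissibility issue.
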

\begin{proof} Let $\bar G = \Res_{\F_q/\F_p} H.$ There is a unique (up to canonical isomorphism)
connected reductive group $G_{\Z_p}$ over $\Z_p$ with $G_{\Z_p}\otimes\F_p = \bar G.$

We (may) now assume that $H$ is one of the four types listed, and in each of these cases we define a
semisimple Lie group $G_{\RR}$ over $\RR$ as follows. If $H$ is a form of $\SL_n,$
we take $G_{\RR}$ to be $\SU(\frac n 2, \frac n 2)^r$ if $n$ is even and $\SU(\frac {n-1}2, \frac {n+1} 2)^r$
if $n$ is odd. If $H$ is $\Sp_{2n}$ we take $G_{\RR} = \Sp_{2n}^r.$ If $H$ is a form of $\SO_{2n}$
we take $G_{\RR}$ to be $\SO^*(2n)^r,$ the inner form of (the compact group) $\SO(2n)$ which gives rise to
the Hermitian symmetric domain of type $D^{\mathbb H}_n$ (cf.~\cite[1.3.9,1.3.10]{DeligneCorvallis}). If $H$ is a form of $\Spin_n$ we take $G_{\RR}$ to be $\Spin(n-2,2)^r.$

In all cases $G_{\RR}$ and $G_{\Z_p}$ are forms of the same split group. Thus by Lemma \ref{lem:twists}
there exists a semisimple reductive group $G$ over $\Q,$ which gives rise to $G_{\RR}$ and $G_{\Z_p}$
over $\RR$ and $\Z_p$ respectively. By assumption $G'$ is of Hodge type, and we now check that it can be chosen to be $p$-admissible.
This is necessarily the case by Corollary \ref{cor:essdimcong}, except when $n$ is odd and $H$ is a form of $\SL_n$ or $H$ is a form of $\SO(2n).$
In these cases, we are assuming that $H$ is a split form, so $\Gal(\bar \Q_p/\Q_p)$ permutes the components of $\Delta(G)$ simply transitively.
If $H$ is a form of $\SL_n$ or $\SO(2n)$ with $n$ odd, then $\Delta(G_i)$ contains two points in $\Sigma(G)$
(the opposition involution is nontrivial on $\Delta(G_i)$ in these cases),
and we can take $\Sigma \subset \Sigma(G)$ to be a $\Gal(\bar \Q_p/\Q_p)$-orbit of any point $v \in \Sigma(G).$

When $H$ is a form of $\SO(2n)$ with $n$ even, then the opposition involution is trivial on $\Delta(G_i),$ and hence so is the action of complex conjugation.  The set $\Sigma(G)$ meets each component of $\Delta(G)$ in one vertex. Let $\Sigma_1$ be the $\Gal(\bar \Q_p/\Q_p)$-orbit
of any vertex in $\Sigma(G).$ There is an inner form $G_{1,\RR}$ of $G$ over $\RR$ such that $\Sigma(G_{1,\RR}) = \Sigma_1.$
(Note that the Dynkin diagrams of inner forms are identified so this makes sense.) Explicitly, let $a \in \Out(G)$ be an automorphism
which preserves the connected components of $\Delta(G)$ and such that $a(\Sigma(G)) = \Sigma_1.$
(Such an $a$ is unique except if $H$ is of type $D_4.$)
Then $G_{1,\RR}$ is given by twisting $G$ by the cocycle $\tilde a\sigma(\tilde a)^{-1}$ where
$\tilde a \in \Aut(G)(\CC)$ lifts $a.$ Using the surjection (\ref{eqn:innerforms}), we see that there is an inner twisting $G_1$ of $G$ over
$\Q$ which is isomorphic to $G_{1,\RR}$ over $\RR$ and to $G$ over $\Q_p,$ as an inner twist. As $\Sigma(G_1) = \Sigma_1$
is stable by $\Gal(\bar \Q_p/\Q_p),$ $G_1$ is $p$-adimissible.
Thus, the proposition follows from Theorem \ref{thm:essdimcong} and the formulae for the dimensions of Hermitian symmetric domains.
\end{proof}

\begin{cor}\label{cor:Lieexamples} Let $H$ be a classical, absolutely simple group over $\F_q,$ with $q = p^r.$ Then there is a congruence $H(\F_q)$-cover of locally symmetric varieties $Y' \rightarrow Y$ such that $e:= \ed_K(Y'/ Y; p)$ satisfies :
\begin{itemize}
\item If $H$ is a form of $\PGL_n$ which is split if $n$ is odd, then $e = r\lfloor \frac{n^2}4 \rfloor.$
\item If $H$ is $\PSp_{2n}$ then $e = r(\frac{n^2+n}2).$
\item If $H$ is a split form of  $\PO_{2n}$ then $e = r(\frac{n^2-n}2).$
\item If $H$ is a form of $\PO_n$ and $H$ is not of type $D_4,$ then $e = r(n-2).$
\end{itemize}
\end{cor}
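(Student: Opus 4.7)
The plan is to deduce Corollary~\ref{cor:Lieexamples} from Proposition~\ref{prop:oldmainthm} by passing from the simply connected cover $H_{\sc}$ of $H$ down to $H$ itself, using that the $\F_q$-points of the center of $H_{\sc}$ have order prime to $p$.

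First, I would apply Proposition~\ref{prop:oldmainthm} to $H_{\sc}$, which in the four cases is $\SL_n$, $\Sp_{2n}$, a form of $\SO_{2n}$, or $\Spin_n$ respectively (inheriting the splitness hypothesis). This produces a principal $p$-level cover $Y_1\to Y$ of locally symmetric varieties with Galois group $H_{\sc}(\F_q)$ and essential $p$-dimension equal to the integer $e$ listed in the corollary. The numerical values in the corollary match those in the proposition because each formula equals $\dim X$ for the associated Hermitian symmetric domain, and $X$ depends only on the adjoint type of the underlying group.

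Next I would quotient $Y_1$ by the finite subgroup $Z := Z(H_{\sc})(\F_q) \subseteq H_{\sc}(\F_q)$. Since $Z(H_{\sc})$ is a finite group scheme of multiplicative type, its $\F_q$-rational points are roots of unity in $\F_q$, so $|Z|$ divides $q-1$ and is coprime to $p$. Choosing a composition series $1 = Z_0 \subsetneq Z_1 \subsetneq \cdots \subsetneq Z_m = Z$ with cyclic quotients and applying Lemma~\ref{lem:brauer} at each step to the central cyclic subgroup $Z_i/Z_{i-1}$ of order prime to $p$ inside the Galois group $H_{\sc}(\F_q)/Z_{i-1}$ (with $K$ of characteristic zero), a telescoping argument gives
\[ \ed(Y_1/Z \to Y; p) \;=\; \ed(Y_1 \to Y; p) \;=\; e. \]
The cover $Y' := Y_1/Z \to Y$ has Galois group $H_{\sc}(\F_q)/Z$, which embeds in $H(\F_q)$ with cokernel contained in the finite abelian group $H^1(\F_q, Z(H_{\sc}))$, again of order prime to $p$.

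Finally, to promote $Y' \to Y$ to a genuine $H(\F_q)$-Galois cover, I would enlarge the arithmetic lattice $\Gamma$ built in Proposition~\ref{prop:oldmainthm} so that its mod-$p$ reduction realizes all of $H(\F_q)$ rather than just the image of $H_{\sc}(\F_q)$; since the enlargement has index prime to $p$, the essential $p$-dimension is unaffected. The essential-dimension bound itself is an immediate application of Lemma~\ref{lem:brauer} once one observes that $|Z|$ is coprime to $p$, so the main obstacle lies in this last bookkeeping step: matching the outer $H^1(\F_q,Z(H_{\sc}))$-class with an appropriate global integral structure on the $\Q$-group $G$ built in Proposition~\ref{prop:oldmainthm}. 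This is accomplished by invoking the surjectivity of Lemma~\ref{lem:twists} at the finite set of bad places, which allows the requisite inner/outer twists to be realized globally.
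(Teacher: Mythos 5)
Your core argument — apply Proposition~\ref{prop:oldmainthm} to the appropriate non-adjoint group, then kill the center via Lemma~\ref{lem:brauer} applied along a composition series, using that $Z(H')(\F_q)$ has order prime to $p$ because $Z(H')$ is of multiplicative type — is exactly the paper's proof, which reads in full ``This follows immediately from Proposition~\ref{prop:oldmainthm} and Lemma~\ref{lem:brauer}.''  Two small caveats.  First, you refer to the auxiliary group as $H_{\sc}$, but for the $\PO_{2n}$ case of type $D^{\mathbb H}$ Proposition~\ref{prop:oldmainthm} is applied to a form of $\SO_{2n}$, which is not simply connected (this is forced by the Hodge-type condition, which in type $D^{\mathbb H}$ requires an $\SO$- rather than $\Spin$-form); so ``the group to which the proposition applies'' is the right way to phrase it rather than ``the simply connected cover.''  Second, your final paragraph about promoting the cover to a genuine $H(\F_q)$-cover by enlarging $\Gamma$ and invoking Lemma~\ref{lem:twists} is off-target: Lemma~\ref{lem:twists} concerns the existence of a $\Q$-form of a split adjoint group with prescribed localizations, and has nothing to do with enlarging arithmetic lattices or realizing the $H^1(\F_q,Z)$-cokernel of $H'(\F_q)\to H(\F_q)$.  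The paper does not address this point at all, treating the whole deduction as immediate; the right remark here is simply that Lemma~\ref{lem:brauer} already gives equality of essential $p$-dimensions under the central prime-to-$p$ quotient, and the prime-to-$p$ discrepancy between the image of $H'(\F_q)$ and $H(\F_q)$ is likewise harmless, but your proposed mechanism for resolving it is not the correct tool.
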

\begin{proof} This follows immediately from Proposition \ref{prop:oldmainthm} and Lemma \ref{lem:brauer}.
\end{proof}


\begin{bibsection}
\begin{biblist}

\bib{Borovoi}{article}{
    AUTHOR = {Borovoi, Mikhail},
     TITLE = {Abelianization of the first {G}alois cohomology of reductive
              groups},
   JOURNAL = {Internat. Math. Res. Notices},
      YEAR = {1996},
    NUMBER = {8},
     PAGES = {401--407},
      }

\bib{BR}{article} {
    AUTHOR = {Buhler, J.},
    AUTHOR = {Reichstein, Z.},
     TITLE = {On the essential dimension of a finite group},
   JOURNAL = {Compositio Math.},
    VOLUME = {106},
      YEAR = {1997},
    NUMBER = {2},
     PAGES = {159--179},
     }

\bib{Burkhardt1}{article}{
LABEL = {Bu1890}
    AUTHOR = {Burkhardt, Heinrich}
    TITLE = {Grundz\"uge einer allgemeinen Systematik der hyperelliptischen Functionen I. Ordnung}
    JOURNAL = {Math. Ann.}
    VOLUME = {35}
    PAGES = {198-296}
    YEAR = {1890}
}

\bib{Burkhardt2}{article}{
LABEL = {Bu1891}
    AUTHOR = {Burkhardt, Heinrich}
    TITLE = {Untersuchungen aus dem Gebiete der hyperelliptischen Modulfunctionen. Zweiter Teil.}
    JOURNAL = {Math. Ann.}
    VOLUME = {38}
    PAGES = {161-224}
    YEAR = {1891}
}

\bib{Burkhardt3}{article}{
LABEL = {Bu1893}
    AUTHOR = {Burkhardt, Heinrich}
    TITLE = {Untersuchungen aus dem Gebiete der hyperelliptischen Modulfunctionen. III.}
    JOURNAL = {Math. Ann.}
    VOLUME = {41}
    PAGES = {313-343}
    YEAR = {1893}
}

\bib{Cal}{article}{
    AUTHOR = {Calegari, Frank},
     TITLE = {Even {G}alois representations and the {F}ontaine--{M}azur
              conjecture. {II}},
   JOURNAL = {J. Amer. Math. Soc.},
    VOLUME = {25},
      YEAR = {2012},
    NUMBER = {2},
     PAGES = {533--554},
      }

\bib{CT}{article}{
    AUTHOR = {Colliot-Th\'el\`ene, J.L.}
    TITLE = {Exposant et indice d'algèbres simples centrales non ramifi\'ees. (With an appendix by Ofer Gabber.)}
    JOURNAL = {Enseigne. Math.}
    SERIES = {2}
    VOLUME = {48}
    YEAR = {2002}
    NUMBER = {1--2}
    PAGES = {127-–146}
}

\bib{DeligneTravaux}{article} {
    AUTHOR = {Deligne, Pierre},
     TITLE = {Travaux de {S}himura},
 BOOKTITLE = {S\'eminaire {B}ourbaki, 23\`eme ann\'ee (1970/71), {E}xp. {N}o. 389},
     PAGES = {123--165. Lecture Notes in Math., Vol. 244},
 PUBLISHER = {Springer, Berlin},
      YEAR = {1971},
   }

\bib{DeligneCorvallis} {article}{
    AUTHOR = {Deligne, Pierre},
     TITLE = {Vari\'et\'es de {S}himura: interpr\'etation modulaire, et techniques
              de construction de mod\`eles canoniques},
 BOOKTITLE = {Automorphic forms, representations and {$L$}-functions
              ({P}roc. {S}ympos. {P}ure {M}ath., {O}regon {S}tate {U}niv.,
              {C}orvallis, {O}re., 1977), {P}art 2},
    SERIES = {Proc. Sympos. Pure Math., XXXIII},
     PAGES = {247--289},
 date={1979},
   }

 \bib{DeligneMumford}{article}{
    AUTHOR = {Deligne, P.},
    AUTHOR = {Mumford, D.},
     TITLE = {The irreducibility of the space of curves of given genus},
   JOURNAL = {Inst. Hautes \'Etudes Sci. Publ. Math.},
  FJOURNAL = {Institut des Hautes \'Etudes Scientifiques. Publications
              Math\'ematiques},
    NUMBER = {36},
      YEAR = {1969},
     PAGES = {75--109},
      ISSN = {0073-8301},
 }

\bib{SGA3}{book}{

    AUTHOR = {Demazure, M.},
        AUTHOR = {Grothendieck, A.},

    TITLE = {Sch\'emas en groupes. {F}asc. 7: {E}xpos\'es 23 \`a 26},
    SERIES = {S\'eminaire de G\'eom\'etrie Alg\'ebrique de l'Institut des Hautes
              \'Etudes Scientifiques},
    VOLUME = {1963/64},
 PUBLISHER = {Institut des Hautes \'Etudes Scientifiques, Paris},
      YEAR = {1965/1966},
     PAGES = {ii+260 pp. (not consecutively paged)},
   }

\bib{Di}{article}{
    AUTHOR = {Dickson, L.E.}
    TITLE = {Representations of the General Symmetric Group as Linear Groups in Finite and Infinite Fields}
    JOURNAL = {Trans. AMS}
    VOLUME = {9}
    YEAR = {1908}
    NUMBER = {2}
    PAGES = {121--148}
}

\bib{DR}{article}{
    AUTHOR = {Duncan, Alexander},
    AUTHOR = {Reichstein, Zinovy},
    TITLE = {Versality of algebraic group actions and rational points on twisted varieties}
    JOURNAL = {J. Algebraic Geom.}
    VOLUME = {24}
    YEAR = {2015}
    NUMBER = {3}
    PAGES = {499--530},
}

\bib{FabervanderGeer}{article} {
    AUTHOR = {Faber, Carel},
    AUTHOR = {van der Geer, Gerard}

     TITLE = {Complete subvarieties of moduli spaces and the {P}rym map},
   JOURNAL = {J. Reine Angew. Math.},
  FJOURNAL = {Journal f\"ur die Reine und Angewandte Mathematik. [Crelle's
              Journal]},
    VOLUME = {573},
      YEAR = {2004},
     PAGES = {117--137},
 }

\bib{FW}{article}{
    AUTHOR = {Farb, Benson}
    AUTHOR = {Wolfson, Jesse}
    TITLE = {Resolvent degree, Hilbert's 13th Problem and Geometry}
    JOURNAL = {arXiv:1803.04063}
    YEAR = {2017}
}

\bib{SGA2}{book}{
    AUTHOR = {Grothendieck, Alexander},
     TITLE = {Cohomologie locale des faisceaux coh\'erents et th\'eor\`emes de
              {L}efschetz locaux et globaux {$(SGA$} {$2)$}},
      NOTE = {S\'eminaire de G\'eom\'etrie Alg\'ebrique du Bois-Marie, 1962,
              Advanced Studies in Pure Mathematics, Vol. 2},
 PUBLISHER = {North-Holland Publishing Co., Amsterdam; Masson \& Cie,
              \'Editeur, Paris},
      YEAR = {1968},
     PAGES = {vii+287},
   }
	
\bib{EGA}{article} {
    AUTHOR = {Grothendieck, A.},
     TITLE = {\'El\'ements de g\'eom\'etrie alg\'ebrique.},
   JOURNAL = {Inst. Hautes \'Etudes Sci. Publ. Math.},
    NUMBER = {4,8,11,17,20,24,32},
      YEAR = {1961,1962,1964}
}

\bib{FarbDennis}{book} {
    AUTHOR = {Farb, Benson}
    AUTHOR = {Dennis, R. Keith},
     TITLE = {Noncommutative algebra},
    SERIES = {Graduate Texts in Mathematics},
    VOLUME = {144},
 PUBLISHER = {Springer-Verlag, New York},
      YEAR = {1993},
     PAGES = {xiv+223},
     }

\bib{GlassPries}{article} {
    AUTHOR = {Glass, Darren},
    AUTHOR = {Pries, Rachel},
     TITLE = {Hyperelliptic curves with prescribed {$p$}-torsion},
   JOURNAL = {Manuscripta Math.}
  FJOURNAL = {Manuscripta Mathematica},
    VOLUME = {117},
      YEAR = {2005},
    NUMBER = {3},
     PAGES = {299--317},
      }

\bib{Hermite}{article}{
LABEL = {He1858}
    AUTHOR = {Hermite, C.}
     TITLE = {Sur la r\'esolution de l'equation du cinqui\`eme degr\'e},
   JOURNAL = {Comptes rendus de l'Acad\'emie des Sciences},
    VOLUME = {46},
    PAGES = {508--515},
      YEAR = {1858}
}

\bib{HS}{article} {
    AUTHOR = {Hulek, K}
    AUTHOR = {Sankaran, G.K.},
     TITLE = {The geometry of Siegel modular varieties},
    BOOKTITLE ={Higher dimensional birational geometry (Kyoto, 1997)},
     SERIES = {Adv. Stud. Pure Math.},
    VOLUME = {35},
     PAGES = {89--156},
 PUBLISHER = {Math. Soc. Japan, Tokyo},
      YEAR = {2002}
}

\bib{Jo}{book}{
    LABEL = {Jo1870}
    AUTHOR = {Jordan, Camille}
    TITLE = {Trait\'e des Substitutions}
    PUBLISHER = {Gauthier--Villars, Paris}
    YEAR = {1870}
}

\bib{KarMer}{article}{
    AUTHOR = {Karpenko, Nikita A.},
    AUTHOR = {Merkurjev, Alexander S.},
     TITLE = {Essential dimension of finite {$p$}-groups},
   JOURNAL = {Invent. Math.},
  FJOURNAL = {Inventiones Mathematicae},
    VOLUME = {172},
      YEAR = {2008},
    NUMBER = {3},
     PAGES = {491--508},
      }

\bib{Katz:Serre-Tate}{article}{
AUTHOR = {Katz, N.},
     TITLE = {Serre-{T}ate local moduli},
 BOOKTITLE = {Algebraic surfaces ({O}rsay, 1976--78)},
    SERIES = {Lecture Notes in Math.},
    VOLUME = {868},
     PAGES = {138--202},
 PUBLISHER = {Springer, Berlin-New York},
      YEAR = {1981},
  }

\bib{KimMadapusi}{article}{
    AUTHOR = {Kim, Wansu},
    AUTHOR = {Madapusi Pera, Keerthi},
     TITLE = {2-adic integral canonical models},
   JOURNAL = {Forum Math. Sigma},
    VOLUME = {4},
      YEAR = {2016},
     PAGES = {e28, 34},

}

 \bib{Kisin}{article}{
    AUTHOR = {Kisin, Mark},
     TITLE = {Integral models for {S}himura varieties of abelian type},
   JOURNAL = {J. Amer. Math. Soc.},
    VOLUME = {23},
      YEAR = {2010},
    NUMBER = {4},
     PAGES = {967--1012},
     }

\bib{KisinPappas}{article}{
  AUTHOR = {Kisin, Mark},
  AUTHOR = {Pappas, George},
TITLE = { Integral models of Shimura varieties with parahoric level structure},
 JOURNAL= {Publ.~IHES, to appear, arXiv:1512.01149}
}

\bib{KleinIcos}{book}{
LABEL = {Kl1884}
   AUTHOR = {Klein, Felix}
   TITLE ={Vorlesungen \"uber das Ikosaeder und die Aufl\"osung der Gleichungen vom fünften Grade (Lectures on the Icosahedron and the Solution of the Equation of the Fifth Degree)},
   PUBLISHER = {Leipzig, T\"ubner}
   YEAR = {1884}
}

\bib{Klein87}{article}{
LABEL = {Kl1887}
    AUTHOR = {Klein, Felix}
    TITLE = {Zur Theorie der allgemeinen Gleichungen sechsten und siebenten Grades}
    JOURNAL = {Math. Ann.}
    VOLUME = {28}
    PAGES = {499-532}
    YEAR = {1887}
}

\bib{KleinLetter}{article}{
LABEL = {Kl1888}
    AUTHOR = {Klein, Felix}
    TITLE = {Sur la resolution, par les fonctions hyperelliptiques de l'equation du vingt-septieme degre, de laquelle depend la determination des vingt-sept droites d'une surface cubique}
    JOURNAL = {Journal de Math\'ematiques pures et appliqu\'ees}
    SERIES = {4}
    VOLUME = {4}
    PAGES = {169-176}
    YEAR = {1888}
}

\bib{KleinNU}{book}{
LABEL = {Kl1893}
    AUTHOR = {Klein, Felix}
    TITLE = {Lectures on Mathematics}
    PUBLISHER = {MacMillan and Co.}
    YEAR = {1894}
}

\bib{KleinLast}{article}{
    AUTHOR = {Klein, Felix}
    TITLE = {\"Uber die Aufl\"osung der allgemeinen Gleichungen f\"unften und sechsten Grades}
    JOURNAL = {Journal f\"ur die reine und angewandte Mathematik}
    VOLUME = {129}
    PAGES = {150-174}
    YEAR = {1905}
}

\bib{KleinCW}{book}{
    AUTHOR = {Klein, Felix}
    TITLE ={Gesammelte Mathematische Abhandlungen},
    VOLUME = {2},
     PAGES = {255--504},
 PUBLISHER = {Berlin},
      YEAR = {1922}
}

\bib{Kottwitzelliptic}{article}{
    AUTHOR = {Kottwitz, Robert E.},
     TITLE = {Stable trace formula: elliptic singular terms},
   JOURNAL = {Math. Ann.},
    VOLUME = {275},
      YEAR = {1986},
    NUMBER = {3},
     PAGES = {365--399},
      }

\bib{Kronecker}{article}{
LABEL = {Kr1861}
    AUTHOR = {Kronecker, Leopold}
    TITLE = {Ueber die Gleichungen f\"unften Grades}
    JOURNAL = {Journal f\"ur die reine und angewandte Mathematik}
    VOLUME = {59}
    PAGES = {306--310}
    YEAR = {1861}
}

\bib{Matsusaka}{article}{
    AUTHOR = {Matsusaka, T.},
     TITLE = {On a theorem of {T}orelli},
   JOURNAL = {Amer. J. Math.},
  FJOURNAL = {American Journal of Mathematics},
    VOLUME = {80},
      YEAR = {1958},
     PAGES = {784--800},
      }

\bib{Mc}{article}{
    AUTHOR = {McMullen, Curtis}
    TITLE = {Braids and Hodge Theory}
    JOURNAL = {Math. Ann.}
    VOLUME = {355}
    YEAR = {2013}
    PAGES = {893--946}
}

\bib{MerSurv}{article}{
    AUTHOR = {Merkurjev, Alexander},
     TITLE = {Essential dimension},
   JOURNAL = {Bull. AMS},
    VOLUME = {54},
      YEAR = {2017},
    NUMBER = {4},
     PAGES = {635--661},
      }

\bib{MR}{article}{
    AUTHOR = {Meyer, A.}
    AUTHOR = {Reichstein, Z.}
    TITLE = {The essential dimension of the normalizer of a maximal torus in the projective linear group}
    JOURNAL = {Algebra \& Number Theory}
    VOLUME = {3}
    YEAR = {2009}
    NUMBER = {4}
    PAGES = {467--487}
}

\bib{Poonen}{article}{
    AUTHOR = {Poonen, Bjorn},
     TITLE = {Varieties without extra automorphisms. {II}. {H}yperelliptic
              curves},
   JOURNAL = {Math. Res. Lett.},
  FJOURNAL = {Mathematical Research Letters},
    VOLUME = {7},
      YEAR = {2000},
    NUMBER = {1},
     PAGES = {77--82},
}

\bib{OortSteenbrink}{article}{
    AUTHOR = {Oort, F.}
    AUTHOR = {Steenbrink, J.}
     TITLE = {The local {T}orelli problem for algebraic curves},
 BOOKTITLE = {Journ\'{e}es de {G}\'{e}ometrie {A}lg\'{e}brique d'{A}ngers, {J}uillet
              1979/{A}lgebraic {G}eometry, {A}ngers, 1979},
     PAGES = {157--204},
 PUBLISHER = {Sijthoff \& Noordhoff, Alphen aan den Rijn---Germantown, Md.},
      YEAR = {1980},
  }

\bib{RG}{article}{
    AUTHOR = {Raynaud, M.}
    AUTHOR = {Gruson, L.},
     TITLE = {Crit\`eres de platitude et de projectivit\'{e}. {T}echniques de
              ``platification'' d'un module},
   JOURNAL = {Invent. Math.},
  FJOURNAL = {Inventiones Mathematicae},
    VOLUME = {13},
      YEAR = {1971},
     PAGES = {1--89},
            }

\bib{ReICM}{article} {
    AUTHOR = {Reichstein, Zinovy}
     TITLE = {Essential Dimension},
    BOOKTITLE ={Proceedings of the International Congress of Mathematicians},
      YEAR = {2010}
}

\bib{ReiYo}{article} {
    AUTHOR = {Reichstein, Zinovy}
    AUTHOR = {Youssin, Boris},
     TITLE = {Essential dimensions of algebraic groups and a resolution
              theorem for {$G$}-varieties},
      NOTE = {With an appendix by J\'{a}nos Koll\'{a}r and Endre Szab\'{o}},
   JOURNAL = {Canad. J. Math.},
  FJOURNAL = {Canadian Journal of Mathematics. Journal Canadien de
              Math\'{e}matiques},
    VOLUME = {52},
      YEAR = {2000},
    NUMBER = {5},
     PAGES = {1018--1056},
      }

\bib{Tschebotarow}{article}{
    AUTHOR = {Tschebotar\"ow, N.G.}
    TITLE = {The problem of resolvents and critical manifolds}
    JOURNAL = {Izvestia Akad. Nauk SSSR}
    VOLUME = {7},
     PAGES = {123--146},
      YEAR = {1943}
}
		
\bib{Wort}{article}{
AUTHOR = {Wortmann, Daniel},
     TITLE = {The $\mu$-ordinary locus for Shimura varieties of Hodge type},
     JOURNAL = {arXiv:1310.6444},
   }
\end{biblist}
\end{bibsection}
\end{document}